\newtheorem{theorem}{Theorem}[section]
\newtheorem{proposition}[theorem]{Proposition}
\newtheorem{lemma}[theorem]{Lemma}
\newtheorem{corollary}[theorem]{Corollary}
\newtheorem{definition}[theorem]{Definition}
\newtheorem{externaltheorem}{Theorem}
\numberwithin{equation}{section}
\DeclareMathOperator{\supp}{supp}
\DeclareMathOperator{\dist}{dist}
\DeclareMathOperator{\conv}{conv}
\DeclareMathOperator{\diam}{diam}
\DeclareMathOperator{\bilip}{biLip}
\DeclareMathOperator{\essinf}{essinf}
\def\emob{{E_\textnormal{M\"ob}}}
\def\Rot{{\textnormal{Rot}}}
\newcommand\ANG{\mathop{\mbox{$<\!\!\!)$}}\nolimits}
\newcommand{\measurerestr}{%
 \,\raisebox{-.127ex}{\reflectbox{\rotatebox[origin=br]{-90}{$\lnot$}}}\,%
}
\newcommand{\omitted}[1]{}
\definecolor{colorph}{rgb}{0,.4,.6}
\newcommand{\R}{\mathbb{R}}
\newcommand{\N}{\mathbb{N}}
\renewcommand{\S}{\mathbb{S}}
\newcommand{\Z}{\mathbb{Z}}
\newcommand\Id{{{\rm Id}}}
\newcommand{\HH}{\mathscr{H}}
\newcommand{\HL}{\mathscr{L}}
\definecolor{mygreen}{RGB}{30,50,0}
\newcommand{\Fo}{\,\,\,\text{for }\,\,}
\newcommand{\Foa}{\,\,\,\text{for all }\,\,}
\newcommand{\AND}{\,\,\,\text{and }\,\,}
\newcommand{\ON}{\,\,\,\text{on }\,\,}
\newcommand{\As}{\,\,\,\text{as }\,\,}
\newcommand{\g}{\gamma}
\newcommand{\G}{\Gamma}
\newcommand{\doi}[2]{}
\def\mvint_#1{\mathchoice
          {\mathop{\vrule width 6pt height 3 pt depth -2.5pt
            \kern -8pt \intop}\nolimits_{\kern -3pt #1}}%
             {\mathop{\vrule width 5pt height 3 pt depth -2.6pt
                        \kern -6pt \intop}\nolimits_{#1}}%
              {\mathop{\vrule width 5pt height 3 pt depth -2.6pt
                   \kern -6pt \intop}\nolimits_{#1}}%
              {\mathop{\vrule width 5pt height 3 pt depth -2.6pt
                      \kern -6pt \intop}\nolimits_{#1}}}
\title[Stability of knot equivalence and symmetric critical knots]
{Stability of knot equivalence at low regularity, 
and symmetric critical knots for the M\"obius energy}
\author{
Simon Blatt}
\address[Simon Blatt]{Paris London Universit\"at Salzburg, Department of Mathematics, Hellbrunner Strasse~34, 5020~Salzburg, Austria}
\email[Simon Blatt]{simon.blatt@sbg.ac.at}
\author{Alexandra Gilsbach}
\address[Alexandra Gilsbach]{
RWTH Aachen University,
Institute for Mathematics,
Templergraben~55,
52062~Aachen, Germany}
\email{gilsbach@instmath.rwth-aachen.de}
\author{Philipp Reiter}
\address[Philipp Reiter]{Chemnitz University of Technology,
Faculty of Mathematics, 09107~Chemnitz, Germany}
\email{reiter@math.tu-chemnitz.de}
\author{Heiko von der Mosel}
\address[Heiko von der Mosel]{
RWTH Aachen University,
Institute for Mathematics,
Templergraben~55,
52062~Aachen, Germany}
\email{heiko@instmath.rwth-aachen.de}
\date{\today}
\begin{document}

\begin{abstract}
We present 
sufficient criteria for the equivalence of tame knots at low regularity.
To this end, we introduce a localized version of Gromov's distortion
for any closed path-connected subset of $\R^n$. If two such sets have
local Gromov distortion
below a universal dimension-dependent constant $g_n$ at some scale, and if their
Hausdorff-distance is less than one quarter of that scale, 
we can show
that the fundamental groups of their complements are isomorphic. 
In addition, we construct this
isomorphism so that it restricts to the corresponding
peripheral subgroups as an isomorphism as well.

Applied to the images of one-dimensional knots
it follows that two knots are equivalent if their Hausdorff-distance
is bounded in terms of the scale under which their local 
Gromov distortion is controlled. From that we deduce
novel stability results for knot
equivalence in the Lipschitz category, and in the setting of fractional
Sobolev regularity below $C^1$. Moreover,
we prove a compactness theorem of knot equivalence classes
with respect to weak $W^{3/2,2}$-convergence.

As an application we show that the M\"obius energy introduced by 
O'Hara~\cite{ohara_1991a} can be minimized within arbitrary
prime knot 
classes under a symmetry constraint, and 
that these minimizers are in fact critical points and therefore
smooth and even
real analytic.  In particular, in every torus knot class there are at
least two distinct critical knots for the M\"obius energy.
\end{abstract}

\maketitle

\tableofcontents


\section{Introduction}

\subsection{Motivation}
The notion of \emph{ambient isotopy} is used in classical knot theory
to distinguish between different knot classes. It follows from a result
of Fisher \cite[Theorem~15]{fisher_1960} that 
two knots $\g_1,\g_2\in C^0(\R/\Z,\R^3)$
are ambiently isotopic if there exists an orientation preserving
homeomorphism $h:\R^3\to\R^3$ such that $h\circ\g_1=\g_2$. In concrete applications
it might be hard to come up with such a homeomorphism, but in the smooth category one
has the following useful stability theorem which to the best of our knowledge
has first been explicitly stated and proven by Diao et al.\@ who give also credit
to Ken Millet.
\begin{externaltheorem}[$C^1$-isotopy stability {\cite[Lemma 3$\cdot$2]{diao-etal_1999},
\cite{reiter_2005}, \cite[Proposition 3.1]{denne-sullivan_2008}}]
\label{thm:C1-stability}
For any immersed embedded curve $\g\in C^1(\R/\Z,\R^3)$ there is some number
$\epsilon_\g>0$ such that all curves $\eta\in C^1(\R/\Z,\R^3)$ with 
$\|\g-\eta\|_{C^1} < \epsilon_\g$ are embedded and ambiently isotopic to $\g$.
\end{externaltheorem}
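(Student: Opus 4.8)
The plan is to follow the classical tube-and-slide argument: build a tubular neighbourhood of $\g$, check that any $C^1$-close competitor $\eta$ threads through it \emph{transversally} --- meeting every normal disk exactly once --- so that after an innocuous reparametrisation $\eta$ becomes a section of the normal disk bundle, and finally push $\g$ onto that section by an ambient isotopy that maps each normal disk to itself.

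For the tube: since $\g$ is a $C^1$ immersion, $c_0:=\min_{\R/\Z}|\g'|>0$, and since $\g'$ is uniformly continuous on the compact $\R/\Z$ there is, for every $\theta>0$, a $\delta>0$ with $|\g(s)-\g(t)-\g'(t)(s-t)|\le\theta\,\dist_{\R/\Z}(s,t)$ whenever $\dist_{\R/\Z}(s,t)<\delta$; because $\g$ is moreover injective, compactness yields $d_0>0$ with $|\g(s)-\g(t)|\ge d_0$ whenever $\dist_{\R/\Z}(s,t)\ge\delta$. From these two facts one shows in the usual way that for a suitable $\rho>0$ the normal map
\[
  \Phi(s,v):=\g(s)+v,\qquad v\in N_s\g:=\g'(s)^{\perp},\ |v|\le2\rho,
\]
is a bi-Lipschitz homeomorphism from the normal disk bundle onto a neighbourhood $U$ of $\g(\R/\Z)$, with induced Lipschitz projection $\pi:=\mathrm{pr}_1\circ\Phi^{-1}\colon U\to\R/\Z$ satisfying $\pi\circ\g=\Id$ (the local estimate rules out tangential self-overlaps of the disk bundle and $d_0$ the long-range ones; take $\rho<d_0/4$ and $\rho$ small relative to $\delta,\theta$).

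Now let $\|\g-\eta\|_{C^1}<\eps_\g$ with $\eps_\g$ small relative to $c_0,\rho$ and the modulus-of-continuity data. Then $|\eta'|\ge c_0-\eps_\g>0$, so $\eta$ is immersed; since $\eps_\g$ is less than the width of $U$ about $\g(\R/\Z)$, both $\eta$ and every segment $[\g(s),\eta(s)]$ lie in $U$; put $g:=\pi\circ\eta$, so $\|g-\Id\|_\infty\le\|\pi\|_{\mathrm{Lip}}\,\eps_\g$ and $g$, being homotopic to $\pi\circ\g=\Id$ through $\pi\circ\bigl((1-t)\g+t\eta\bigr)$, has degree one. The crucial --- and only genuinely delicate --- step is to upgrade this to: $g$ is a homeomorphism. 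I would do so by showing $g$ is locally injective: if $g(s_1)=g(s_2)=s$ then $\eta(s_1),\eta(s_2)$ lie on the same normal disk $D_s$, hence $\eta(s_1)-\eta(s_2)\perp\g'(s)$; but $s_1,s_2$ are then both within $O(\eps_\g)$ of $s$, so for $\sigma$ between them $\eta'(\sigma)$ is $(\eps_\g+\theta)$-close to $\g'(s)$, giving $\langle\eta'(\sigma),\g'(s)\rangle\ge|\g'(s)|(c_0-\eps_\g-\theta)>0$ and therefore $\langle\eta(s_1)-\eta(s_2),\g'(s)\rangle=\int_{s_2}^{s_1}\langle\eta'(\sigma),\g'(s)\rangle\,d\sigma\ne0$ unless $s_1=s_2$, a contradiction. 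A locally injective continuous self-map of $\Sphere^1$ is a covering map, so by degree one $g$ is a homeomorphism. Set $\tilde\eta:=\eta\circ g^{-1}$; then $\tilde\eta$ is injective with the same image as $\eta$ (so $\eta$ is embedded), satisfies $\pi\circ\tilde\eta=\Id$ --- i.e.\ $\tilde\eta(s)\in D_s$ --- and $\|\tilde\eta-\g\|_\infty\le C\eps_\g$.

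Finally, fix a Lipschitz cutoff $\varphi\colon[0,\infty)\to[0,1]$ with $\varphi\equiv1$ on $[0,\rho]$, $\varphi\equiv0$ on $[2\rho,\infty)$, $\mathrm{Lip}(\varphi)\le1/\rho$; set $c(s):=\tilde\eta(s)-\g(s)\in N_s\g$, $|c(s)|\le C\eps_\g$, and define $H\colon\R^3\times[0,1]\to\R^3$ by $H(x,u):=x$ for $x\notin U$ and
\[
  H\bigl(\Phi(s,v),u\bigr):=\Phi\bigl(s,\,v+u\,\varphi(|v|)\,c(s)\bigr).
\]
Since $\varphi$ vanishes near $|v|=2\rho$, this matches across $\partial U$ and is jointly continuous with $H(\cdot,0)=\Id$. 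For fixed $s$ the fibre map $v\mapsto v+u\varphi(|v|)c(s)$ is a perturbation of $\Id$ on the disk $\{|v|\le2\rho\}$ by a Lipschitz map of constant $\le|c(s)|/\rho\le C\eps_\g/\rho<1$ fixing the boundary, hence a homeomorphism of that disk depending continuously on $s$; consequently $H(\cdot,u)$ acts in the coordinates $\Phi$ fibrewise by homeomorphisms and is itself a homeomorphism of $U$ fixing $\partial U$, extending by $\Id$ to a homeomorphism of $\R^3$ isotopic to $\Id$. Since $H(\g(s),1)=\Phi(s,c(s))=\tilde\eta(s)$, the homeomorphism $H(\cdot,1)$ carries $\g(\R/\Z)$ onto $\eta(\R/\Z)$ with $H(\cdot,1)\circ\g=\tilde\eta$ a reparametrisation of $\eta$; thus $H$ is an ambient isotopy deforming $\g$ into $\eta$ (one may instead feed $H(\cdot,1)$ into Fisher's theorem quoted above). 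The main obstacle is the transversality step: it is the one point where $C^1$- rather than merely $C^0$-closeness is indispensable, and since $\g$ is only $C^1$ the projection $\pi$ is merely Lipschitz, so it must be handled by the direct quantitative estimate above rather than by an inverse-function-theorem argument.
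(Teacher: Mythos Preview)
The paper does not give its own proof of this statement: Theorem~A is quoted as an external result with references to Diao--Ernst--Janse van Rensburg, Reiter, and Denne--Sullivan, and the paper only adds the remark that it also follows from the $C^1$-isotopy extension theorem of \cite{blatt_2009a}. So there is nothing in the paper to compare against; your write-up is essentially a reconstruction of the classical tube argument that those references carry out.

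Your argument is correct in substance, but one regularity claim is slightly overstated. For a curve that is merely $C^1$, the normal-plane map $\Phi$ need not be bi-Lipschitz and the projection $\pi$ need not be Lipschitz: the normal planes turn only with the modulus of continuity of $\g'$, so the estimate $|s_1-s_2|\le C|x_1-x_2|$ can fail. What you actually use, however, is only that $\Phi$ is a homeomorphism and that $\pi$ is uniformly continuous on the closed tube, and this is true and suffices everywhere: $\|g-\Id\|_\infty$ is small by uniform continuity of $\pi$; in the local-injectivity step ``$s_1,s_2$ within $O(\eps_\g)$ of $s$'' should read ``$s_1,s_2$ within $\omega_\pi(\eps_\g)$ of $s$'', after which $\eta'(\sigma)$ is $(\eps_\g+\omega_{\g'}(\omega_\pi(\eps_\g)))$-close to $\g'(s)$ and the inner-product estimate goes through unchanged for $\eps_\g$ small. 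With that cosmetic correction the transversality step, the degree-one covering argument, and the fibrewise isotopy $H$ are all fine.
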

This result also follows from the fact that any $C^1$-isotopy can be extended
to an ambient isotopy in arbitrary dimensions and co-dimensions which was proven\footnote{
For a proof in the simpler setting of the $C^2$-category see 
\cite[Chapter 8]{hirsch_1994}.}
by the first author in \cite{blatt_2009a}. The $C^1$-stability of isotopy is used
quite frequently in \emph{geometric knot theory} where one produces and investigates
specific knot representatives as minimizers or critical points of so-called
knot energies, 
to obtain more information about their knot classes; see, e.g., the
surveys 
\cite{strzelecki-vdm_2013a,strzelecki-etal_2013a,strzelecki-vdm_2014,blatt-reiter_2014,strzelecki-vdm_2018}  
and the many references therein. 
However, for the most interesting and mathematically more challenging
scale-invariant knot energies such as the M\"obius energy\footnote{In addition to 
scale-invariance the M\"obius energy is also invariant under M\"obius transformations
of the ambient Euclidean space \cite[Theorem~2.1]{freedman-etal_1994}, which lent
this energy that name.}
introduced by O'Hara \cite{ohara_1991a}
the underlying energy spaces do \emph{not} embed into $C^1$. Therefore,
one cannot expect, e.g., that minimizing sequences for these energies subconverge
in $C^1$, so that it is a priori not clear if the limit curves still belong to the
prescribed knot class. One of our goals in the present paper is therefore
to develop criteria
for stability of knot classes at low regularity.

For that we consider the slightly weaker notion
of \emph{knot equivalence} instead of ambient isotopy. Two knots $\g_1,\g_2\in
C^0(\R/\Z,\R^3)$ are said to be \emph{equivalent}, denoted by $\g_1\sim\g_2$, if and
only if there is a homeomorphism $h:\R^3\to\R^3$ such that $h(\g_1(\R/\Z))=\g_2(\R/\Z)$;
cf.\@ \cite[p. 4]{crowell-fox_1977}. It is known, e.g., that the trefoil knot is not
ambiently isotopic to its mirror image; see \cite[Theorem~3.39 (b)]{burde-zieschang_2014}. The corresponding reflection, however,
is an orientation reversing homeomorphism of $\R^3$ mapping the image of the trefoil
knot onto its mirror image, so that the two knots are equivalent.  So, there are
more (ambient) isotopy types than knot equivalence classes in knot space. On the other
hand, if two tame
arclength parametrized knots are equivalent, then one
can show that they are either ambiently
isotopic, or one is ambiently isotopic to the other one's mirror image. 

Gordon and Luecke studied in \cite{gordon-luecke_1989}
the fundamental groups of the knots' complements, also 
called \emph{knot groups}, and their peripheral subgroups. Combining their work
with results of Waldhausen \cite{waldhausen_1968} yields the following sufficient
criteria for knot equivalence.
\begin{externaltheorem}[Knot equivalence via knot complements
{\cite{waldhausen_1968,gordon-luecke_1989}}]\label{thm:gordon-luecke}
Let  $\g_1,\g_2\in C^0(\R/\Z,\R^3)$ be two tame knots.
\begin{enumerate}
\item[\rm (i)] If their complements $\R^3\setminus\g_i(\R/\Z)$, $i=1,2$, are homeomorphic,
then $\g_1\sim\g_2$.
\item[\rm (ii)] If $\g_1,\g_2$ are prime and their knot groups 
$\pi_1(\R^3\setminus\g_i(\R/\Z))$, $i=1,2,$ are isomorphic,  then $\g_1\sim\g_2$. 
\item[\rm (iii)] If the knot groups $\pi_1(\R^3\setminus\g_i(\R/\Z))$, $i=1,2,$ 
are isomorphic and if this isomorphism induces isomorphic peripheral subgroups $P_{\g_i}$, $i=1,2,$
then $\g_1\sim\g_2$ even if the $\g_i$ are composite knots.
\end{enumerate}
\end{externaltheorem}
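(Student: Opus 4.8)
The plan is to derive all three assertions from two deep theorems of $3$-manifold topology: Gordon and Luecke's theorem \cite{gordon-luecke_1989} that a knot in $S^3$ is determined by its complement, and Waldhausen's theorem \cite{waldhausen_1968} that a peripheral-structure-preserving isomorphism between fundamental groups of Haken manifolds is induced by a homeomorphism. First I would record the standard reductions. Passing to the one-point compactification $S^3=\R^3\cup\{\infty\}$ and using that each $\g_i$ is tame, we replace the open complement by the compact \emph{knot exterior} $X_i:=S^3\setminus N_i$, where $N_i$ is an open tubular neighbourhood of $\g_i(\R/\Z)$; then $X_i$ is a compact orientable irreducible $3$-manifold with $\partial X_i\cong T^2$, and a minimal-genus Seifert surface of $\g_i$ intersects $X_i$ in a properly embedded incompressible surface, so $X_i$ is a Haken manifold (once $\g_i$ is knotted — the unknot, whose group is $\Z$, being immediate). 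Moreover $\pi_1(X_i)\cong\pi_1(\R^3\setminus\g_i(\R/\Z))$; the images $\g_i(\R/\Z)$ are equivalent if and only if some homeomorphism of $S^3$ carries one onto the other; and $\R^3\setminus\g_1(\R/\Z)$ is homeomorphic to $\R^3\setminus\g_2(\R/\Z)$ if and only if $X_1\cong X_2$, since the end near infinity is an $S^2$-end and is canonically capped off. Thus nothing is lost in these reductions.

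Part~(i) is then exactly the theorem of Gordon and Luecke: from $X_1\cong X_2$ one obtains a homeomorphism of $S^3$ mapping $\g_1(\R/\Z)$ onto $\g_2(\R/\Z)$, whence $\g_1\sim\g_2$. Here it is essential that we deal with knots, not links: a homeomorphism of exteriors need not respect meridians, but for knots it nonetheless forces an equivalence of the pairs, up to the unavoidable meridian--longitude symmetries.

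For part~(iii), the hypothesis furnishes an isomorphism $\Phi\colon\pi_1(X_1)\to\pi_1(X_2)$ carrying $P_{\g_1}$ onto a conjugate of $P_{\g_2}$. Since knot exteriors are aspherical, $\Phi$ is induced by a map $X_1\to X_2$, which can be taken to send $\partial X_1$ into $\partial X_2$; as $X_1,X_2$ are Haken with incompressible torus boundary, Waldhausen's theorem promotes this map to a homeomorphism $X_1\to X_2$. In particular $X_1\cong X_2$, and part~(i) gives $\g_1\sim\g_2$.

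For part~(ii) it remains to reduce the weaker hypothesis that merely $\pi_1(X_1)\cong\pi_1(X_2)$, together with primeness of the $\g_i$, to the hypothesis of part~(iii); that is, one must show that for a prime knot the peripheral subgroup is determined up to conjugacy by the isomorphism type of the knot group alone. This is the main obstacle, and the only point where primeness enters. I would argue along the geometric (JSJ/torus) decomposition of the prime knot exterior: in the hyperbolic case $P_{\g_i}$ is the maximal parabolic subgroup, pinned down by Mostow rigidity; for a torus knot it is characterised through the centre together with the Seifert structure; and in the satellite case it is the peripheral vertex group of the canonical decomposition, which is itself a group invariant. Consequently any isomorphism $\pi_1(X_1)\to\pi_1(X_2)$ automatically carries $P_{\g_1}$ to a conjugate of $P_{\g_2}$, so part~(iii) applies; equivalently, one may invoke the classical statement that prime knots are distinguished by their groups and feed the resulting homeomorphism of exteriors into part~(i). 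The delicate ingredient throughout is precisely that the geometric decomposition — and hence the peripheral subgroup of a prime knot — is invariant under abstract group isomorphism; once that is granted, everything else is bookkeeping around the two cited theorems.
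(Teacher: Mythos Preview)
The paper does not supply a proof of this statement: Theorem~\ref{thm:gordon-luecke} is an \emph{external} theorem, stated with attribution to \cite{waldhausen_1968,gordon-luecke_1989} and used as a black box throughout (most notably in deriving Corollary~\ref{cor:equiv-knots} from Theorem~\ref{thm:isomorphic}). There is therefore no ``paper's own proof'' to compare against.

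That said, your sketch is a reasonable outline of how the three parts are assembled from the literature. Part~(i) is exactly Gordon--Luecke. Part~(iii) is Waldhausen's rigidity for Haken manifolds plus~(i), as you say. For part~(ii) your strategy --- show that for a prime knot the peripheral subgroup is a group-theoretic invariant up to conjugacy, then feed into~(iii) --- is correct in spirit, though the case analysis via the JSJ decomposition is where all the content lies and is only gestured at here; the more common citation for this step is Whitten's 1987 result that prime knots with isomorphic groups have homeomorphic exteriors, which then combines directly with~(i). Either route is fine, but be aware that your JSJ argument as written would need substantial expansion (in particular, the satellite case requires care with how abstract group isomorphisms interact with the canonical decomposition) before it could stand as a proof rather than a plausibility argument.
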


Recall that for a set $M\subset\R^n$ with path-connected complement $\R^n\setminus M$
two closed loops $c_1,c_2\in C^0(\R/\Z,\R^n\setminus M)$ are equivalent if they are
homotopic, and that the fundamental group $\pi_1(\R^n\setminus M)$ consists of
the  respective equivalence classes. The group  structure is induced by concatenation
of closed loops containing a fixed base point in $\R^n\setminus M$. The peripheral
subgroup $P_M\subset\pi_1(\R^n\setminus M)$ then consists of all equivalence classes that
possess representative loops arbitrarily close to $M$.

\subsection{Isomorphic fundamental groups via localized distortion}
In order to apply the results on knot equivalence, i.e., Theorem~\ref{thm:gordon-luecke},
under low regularity assumptions
we are going to use a localized version of Gromov's distortion \cite{gromov_1978}. 
For a closed, path-connected set $M\subset\R^n$ its \emph{distortion} is defined as
the quantity
\begin{equation}\label{eq:distortion}
\delta(M):=\sup_{x,y\in M\atop x\not= y}\frac{d_M(x,y)}{|x-y|}
\qquad\ge 1,
\end{equation}
where $d_M(x,y)$ denotes the  
\emph{intrinsic distance} between the points $x,y$, i.e.,
the length of the shortest curve  in $M$ connecting $x$ with $y$. It is well-known that
the circle $c_{2\pi}$ satisfies $\delta(c_{2\pi})=\frac{\pi}2$ uniquely minimizing 
the distortion among all closed curves (see \cite[Proposition 2.1]{kusner-sullivan_1997} 
for a nice and short proof of that fact).  
Moreover, any non-trivial tame
knot $\g\in C^0(\R/\Z,\R^3)$ has distortion at least
$\frac{5\pi}3$, which was shown by Denne and Sullivan in \cite{denne-sullivan_2009}.
So, distortion detects knottedness, but it is not useful to distinguish between different
knots. For example, distortion cannot prevent the pull-tight phenomenon 
where knotted subarcs of a curve 
can be scaled down to vanish in the limit without any effect
on the curve's distortion \cite{mullikin_2007}. In addition, O'Hara constructed infinitely many
different prime knots with a uniform upper bound on their distortion 
\cite[Theorem~3.5]{ohara_1992a}.
On the other hand, Gromov asked if there is a universal bound on the distortion 
under which \emph{every}
knot type can be represented.
This was answered to the negative by Pardon in
\cite[Theorems 1.1 \& 1.2]{pardon_2011}
who also points out~\cite[p.~638]{pardon_2011} that there are 
even wild knots with finite distortion such as \cite[Figure~5]{crowell-fox_1977}.

Nevertheless, a universal bound on a
localized version of distortion
together with a comparable $L^\infty$-distance guarantees knot equivalence as
we will see in Corollary \ref{cor:equiv-knots} below.
\begin{definition}\label{def:local-distortion}
For a nonempty closed, path-connected set $M\subset\R^n$ and $r>0$ we define
the \emph{local distortion of $M$ at scale $r$} as
\begin{equation}\label{eq:local-distortion}
\delta(M,r):=\sup_{x,y\in M\atop 0<|x-y|\le 2r}\frac{d_M(x,y)}{|x-y|}.
\end{equation}
We say that \emph{$M$ has local distortion strictly below $\varepsilon>0$}
if there is some $r_M>0$ such that $\delta(M,r_M)<\varepsilon$,
and we call any such $r_M>0$ an \emph{admissible scale}.
\end{definition}

Our first result on fundamental groups is stated for general 
subsets of $\R^n$ and might therefore
be of independent interest. Its formulation uses the
dimension-dependent constant
\begin{equation}\label{eq:dekster-jung}
g_n:=\sqrt{\frac{2n-2}n}\arcsin\sqrt{\frac{n}{2n-2}}
\quad\Fo n\ge 3,
\end{equation}
converging in a strictly decreasing manner
to $g_\infty:=\frac{\pi}{\sqrt{8}}$ as $n\to\infty$. Notice 
that
$g_3= \frac2{\sqrt{3}}\arcsin\frac{\sqrt{3}}2
= \frac{\sqrt{32}}{\sqrt{27}}g_\infty$ equals the
distortion of one third of a circle, whereas $g_\infty$
is the distortion of a quarter circle.
\begin{theorem}[Isomorphic fundamental groups]\label{thm:isomorphic}
Let $n\ge 3$.
If two nonempty closed, path-connected sets $M_i\subset\R^n$ 
with path-connected complements
$\R^n\setminus M_i$ have local distortion strictly
below $g_n$
at scales $r_{M_i}>0$ for $i=1,2$, and satisfy
\begin{equation}\label{eq:hausdorff}
\dist_\HH(M_1,M_2)<\frac14 \min\{r_{M_1},r_{M_2}\},
\end{equation}
then there exists an isomorphism $J:\pi_1(\R^n\setminus M_1)\to
\pi_1(\R^n\setminus M_2)$ such that its restriction to the peripheral
subgroup $P_{M_1}\subset \pi_1(\R^n\setminus M_1)$ maps $P_{M_1}$ isomorphically onto 
$P_{M_2}\subset \pi_1(\R^n\setminus M_2)$.
\end{theorem}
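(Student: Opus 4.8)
The plan is to build the isomorphism geometrically, by showing that the two sets $M_1, M_2$ look so similar at the relevant scale that a loop in $\R^n\setminus M_1$ can be "pushed" to a loop in $\R^n\setminus M_2$ in a controlled, homotopy-class-preserving way, and vice versa. The key quantitative input is that local distortion strictly below $g_n$ forces each $M_i$ to be, on balls of radius comparable to $r_{M_i}$, very close to a single almost-flat piece: the Dekster–Jung-type bound built into the constant $g_n$ in \eqref{eq:dekster-jung} says precisely that a subset of a round sphere (or of $M_i$, seen through the intrinsic metric) of small intrinsic diameter and distortion below $g_n$ must be contained in an open half-space through its "center", hence cannot separate a small ball. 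Concretely, I would first prove the following local structure lemma: if $M$ has $\delta(M,r_M)<g_n$ and $x\in M$, then for a suitable $\rho$ of order $r_M$ the set $M\cap \Ball{x}{\rho}$ is path-connected and $\Ball{x}{\rho}\setminus M$ is path-connected (it does not locally disconnect the ambient ball); the point is that the intrinsic-diameter bound on $M\cap\Ball{x}{\rho}$ coming from $\delta(M,r_M)<g_n$ keeps this piece inside a half-space, so its complement in the ball is connected. This is the technical heart and I expect it to be the main obstacle — extracting from a bound on $\delta(M,r)$ a genuine "one-sheeted, non-separating" conclusion requires care, essentially reproving in this intrinsic-metric setting the half-space containment that Dekster's refinement of Jung's theorem gives for subsets of spheres.

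With the local structure lemma in hand, the second step is a covering/nerve argument. Using \eqref{eq:hausdorff}, choose a scale $s$ with $\dist_\HH(M_1,M_2)<s/4 \le \tfrac14\min\{r_{M_1},r_{M_2}\}$ and cover a neighborhood of $M_1$ (equivalently of $M_2$, since they are Hausdorff-close) by balls $\Ball{x_j}{s}$ with $x_j\in M_1$. On each such ball, by the local structure lemma, neither $M_1$ nor $M_2$ separates, and any two points of $\Ball{x_j}{s}\setminus M_i$ are joined by a path in $\Ball{x_j}{2s}\setminus M_i$; moreover, because $M_2$ lies within $s/4$ of $M_1$ and vice versa, a point avoiding $M_1$ at definite distance inside $\Ball{x_j}{s}$ also avoids $M_2$. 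I would use this to define a "sliding" correspondence: given a loop $c$ in $\R^n\setminus M_1$ with small mesh (after subdivision, each subarc lies in one $\Ball{x_j}{s}$), replace each subarc by the corresponding subarc pushed off $M_2$; the local connectedness statements guarantee the result is a well-defined loop in $\R^n\setminus M_2$ and that homotopic loops go to homotopic loops, so one gets a homomorphism $J$. The same construction with the roles of $M_1$ and $M_2$ reversed gives $J'$, and a standard "second-order" homotopy estimate (the composite slide $c\mapsto c'' $ stays within a neighborhood that retracts to the original loop) shows $J'\circ J=\Id$ and $J\circ J'=\Id$, so $J$ is an isomorphism.

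Finally, for the peripheral subgroups: by definition $P_{M_i}$ consists of classes with representative loops arbitrarily close to $M_i$. The slide map $J$ moves a loop by at most a bounded multiple of $\dist_\HH(M_1,M_2)$ plus the ball radius $s$, and — crucially — one may run the whole argument at an arbitrarily small admissible substitute scale by first shrinking: if a loop representing a class in $P_{M_1}$ lies within $\eta$ of $M_1$, then (taking the mesh $\sim\eta$) its image under $J$ lies within $C(\eta+\dist_\HH(M_1,M_2))$ of $M_2$, but since Hausdorff-distance is fixed we instead argue directly that a loop $\eta$-close to $M_1$ is $2\eta$-close to $M_2$ (by \eqref{eq:hausdorff}, once $\eta$ is below the scale) and is homotoped by $J$ within a $C\eta$-neighborhood of $M_2$, so its image is still arbitrarily close to $M_2$ as $\eta\to 0$; hence $J(P_{M_1})\subseteq P_{M_2}$, and symmetrically $J'(P_{M_2})\subseteq P_{M_1}$. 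Combined with $J'=J^{-1}$ this yields $J(P_{M_1})=P_{M_2}$, completing the proof. The only genuinely delicate points are the local structure lemma of step one and checking that the slide map is well-defined on homotopy classes (i.e., independent of subdivision and of the choices of pushed-off subarcs), which both reduce to the non-separation statement applied on overlapping balls $\Ball{x_j}{2s}$.
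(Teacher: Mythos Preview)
Your approach is genuinely different from the paper's, and the proposed ``local structure lemma'' is false as stated. Take $M$ to be the closed unit $2$-disk $\{(y_1,y_2,0):y_1^2+y_2^2\le 1\}\subset\R^3$: it is closed and path-connected with path-connected complement, and has distortion identically $1<g_3$ at every scale; yet for any interior point $x\in M$ and small $\rho$, the set $B_\rho(x)\setminus M$ consists of two disjoint open half-balls. Containment in a half-space does not prevent local separation, and in the paper the Dekster--Jung bound is not applied to $M$ itself but to the set of \emph{unit directions} $\{(x-z)/|x-z|:z\in M,\ |z-x|\le(1+\epsilon)d_x\}\subset\S^{n-1}$ seen from a point $x\notin M$. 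This set has small spherical diameter by the distortion hypothesis (Lemma~\ref{lem:set-directions}) and hence lies in a spherical cap of radius $<\pi/2$, yielding a well-defined ``outward'' unit vector at each point of $B_{r_M}(M)\setminus M$. These directions are glued via a partition of unity into a smooth pseudo-gradient $V$ for $\dist(\cdot,M)$ (Proposition~\ref{prop:vf}), and $J$ is simply the map induced on $\pi_1$ by the time-$1$ distance-increasing flow $h_{M_1}^+$ of $V$: a single global homotopy, with no covering, subdivision, or local choices to reconcile.

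Your peripheral-subgroup argument has a separate gap. As you yourself note, the slide map displaces a loop by an amount bounded below in terms of $\dist_\HH(M_1,M_2)$, so a representative $\eta$-close to $M_1$ lands only within $C(\eta+\dist_\HH(M_1,M_2))$ of $M_2$, and this does \emph{not} tend to $0$ as $\eta\to 0$; the sentence ``a loop $\eta$-close to $M_1$ is $2\eta$-close to $M_2$ once $\eta$ is below the scale'' is simply wrong when $\eta<\dist_\HH(M_1,M_2)$. What is actually needed is the converse statement: any loop lying merely within $r_{M_2}$ of $M_2$ already represents a peripheral class. The paper proves exactly this (Proposition~\ref{prop:elements-peripheral}) via a second, distance-\emph{decreasing} flow $h_{M_2}^-$ built from the same vector field; without some analogue of this contraction, the inclusion $J(P_{M_1})\subseteq P_{M_2}$ does not follow from your estimates.
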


\subsection{Stability of knot equivalence}
As an immediate consequence of Theorem~\ref{thm:isomorphic} in combination with
part (iii) of Theorem~\ref{thm:gordon-luecke} we obtain a simple
sufficient geometric criterion for knot equivalence.
\begin{corollary}[Knot equivalence via local distortion]
\label{cor:equiv-knots}
Two tame knots $\g_i\in C^0(\R/\Z,\R^3)$ with local 
distortion 
 $\delta(\g_i(\R/\Z),r_{\g_i})<g_3$
at scales $r_{\g_i}>0$
for $i=1,2$, and
$\dist_{\HH}(\g_1(\R/\Z),\g_2(\R/\Z))$
$<\frac 14 \min\{r_{\g_1},r_{\g_2}\}$, are equivalent.
\end{corollary}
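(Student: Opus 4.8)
The plan is to read this corollary as the direct concatenation of the two quoted external results, Theorem~\ref{thm:isomorphic} and part~(iii) of Theorem~\ref{thm:gordon-luecke}: nearly all the work is already done, and what remains is to check that the images of the two knots satisfy the hypotheses of Theorem~\ref{thm:isomorphic}, and that its conclusion is exactly the input required by Theorem~\ref{thm:gordon-luecke}(iii).

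First I would put $M_i:=\g_i(\R/\Z)\subset\R^3$ for $i=1,2$. Each $M_i$ is the continuous image of the compact connected space $\R/\Z$, hence a nonempty compact (in particular closed) path-connected subset of $\R^3$. Its complement $\R^3\setminus M_i$ is open, and it is connected: passing to $\Sphere^3=\R^3\cup\{\infty\}$, the complement $\Sphere^3\setminus M_i$ of an embedded tame circle is connected (for instance by Alexander duality, $\tilde H_0(\Sphere^3\setminus M_i)\cong\check H^2(M_i)=0$), and deleting the single point $\infty$ from a connected open $3$-manifold keeps it connected; an open connected subset of $\R^n$ is path-connected. Thus both $M_i$ meet the standing assumptions of Theorem~\ref{thm:isomorphic}. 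Next I would verify the two quantitative hypotheses with $n=3$: by assumption $\delta(M_i,r_{\g_i})<g_3$, so by Definition~\ref{def:local-distortion} each $M_i$ has local distortion strictly below $g_3$ with admissible scale $r_{M_i}:=r_{\g_i}$, and the condition $\dist_\HH(M_1,M_2)<\tfrac14\min\{r_{M_1},r_{M_2}\}$ is precisely the Hausdorff bound assumed in the corollary. Applying Theorem~\ref{thm:isomorphic} therefore produces an isomorphism $J\colon\pi_1(\R^3\setminus M_1)\to\pi_1(\R^3\setminus M_2)$ whose restriction maps the peripheral subgroup $P_{M_1}$ isomorphically onto $P_{M_2}$.

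Finally I would feed this into Theorem~\ref{thm:gordon-luecke}(iii). Since $\g_1,\g_2$ are tame, the subgroup $P_{M_i}\subset\pi_1(\R^3\setminus\g_i(\R/\Z))$ of classes admitting representatives arbitrarily close to $M_i$ coincides with the classical peripheral subgroup $P_{\g_i}$ of the knot $\g_i$, namely the image in $\pi_1$ of the boundary torus of a tubular neighborhood. Hence $J$ is an isomorphism of the knot groups carrying $P_{\g_1}$ onto $P_{\g_2}$, i.e.\ it induces isomorphic peripheral subgroups, so Theorem~\ref{thm:gordon-luecke}(iii) yields $\g_1\sim\g_2$ with no primeness hypothesis needed. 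The only points that need more than bookkeeping are the connectedness of $\R^3\setminus M_i$ and the identification of the ``loops arbitrarily close to $M_i$'' subgroup with the classical peripheral subgroup; both are standard for tame knots, and I expect the latter identification to be the subtlest step to phrase cleanly.
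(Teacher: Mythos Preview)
Your proposal is correct and follows exactly the approach the paper takes: the paper states explicitly (in the introduction and at the start of Section~\ref{sec:stability}) that Corollary~\ref{cor:equiv-knots} is an immediate consequence of Theorem~\ref{thm:isomorphic} applied to $M_i:=\g_i(\R/\Z)$ in combination with Theorem~\ref{thm:gordon-luecke}(iii), and gives no further argument. You supply more detail than the paper does---the path-connectedness of the knot complements and the identification of $P_{M_i}$ with the classical peripheral subgroup---but these are precisely the routine verifications the paper leaves implicit.
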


Notice that a small Hausdorff-distance, even an arbitrarily
small $L^\infty$-distance $\|\g_1-\g_2\|_{L^\infty}$
\emph{alone} does not guarantee knot equivalence:
One can insert small knotted arcs at any scale to produce highly complex
knots arbitrarily close to the round circle. Essential 
is the combination of controlled local
distortion at some scales and  
the Hausdorff-distance bounded in terms of these scales.
With Corollary \ref{cor:equiv-knots} we can prove 
the following stability of knot equivalence in the 
Lipschitz category.

\begin{corollary}[Lipschitz stability of knot equivalence]\label{cor:lipschitz-stability}
Let $\g:\R/\Z\to\R^3$ be an arclength parametrized tame 
knot with 
local distortion $\delta(\g(\R/\Z),r_\g)< g_3$
at scale $r_\g>0$. Then for all $h\in C^{0,1}(\R/\Z,\R^3)$
there exists $\epsilon_{\g,h}>0$ such that $\g_\epsilon:=\g+\epsilon h$ is
an immersed knot equivalent to $\g$ for all $|\epsilon| \le \epsilon_{\g,h}$.
\end{corollary}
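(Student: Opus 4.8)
The plan is to deduce the claim from Corollary~\ref{cor:equiv-knots} applied to the pair $\g,\g_\epsilon$, so the work consists in verifying its hypotheses once $|\epsilon|$ is small. Abbreviate $L:=\lip(h)$, $M:=\g(\R/\Z)$, $M_\epsilon:=\g_\epsilon(\R/\Z)$, and let $g:=\delta(M,r_\g)<g_3$ be the given local distortion; we may assume $L>0$ and $\|h\|_{L^\infty}>0$, as otherwise $\g_\epsilon$ is a translate of $\g$. Since $\g$ is parametrized by arclength, $\g_\epsilon'=\g'+\epsilon h'$ satisfies $1-|\epsilon|L\le|\g_\epsilon'|\le 1+|\epsilon|L$ a.e., so $\g_\epsilon$ is immersed for $|\epsilon|L<1$.

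The key estimate is a lower bound on chords of $\g_\epsilon$. For $s\ne t$ in $\R/\Z$ put $\sigma:=d_{\R/\Z}(s,t)\in(0,\tfrac12]$ and use $\g_\epsilon(s)-\g_\epsilon(t)=\bigl(\g(s)-\g(t)\bigr)+\epsilon\bigl(h(s)-h(t)\bigr)$ with $|h(s)-h(t)|\le L\sigma$. If $\sigma\le 2r_\g$, then the chord satisfies $|\g(s)-\g(t)|\le\sigma\le 2r_\g$, so the local distortion bound for $\g$ applies and gives $\sigma=d_M(\g(s),\g(t))\le g\,|\g(s)-\g(t)|$, whence
\[
|\g_\epsilon(s)-\g_\epsilon(t)|\ \ge\ \tfrac1g\sigma-|\epsilon|L\sigma\ =\ \bigl(\tfrac1g-|\epsilon|L\bigr)\sigma\ >\ 0
\]
as soon as $|\epsilon|L<\tfrac1g$. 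If instead $\sigma>2r_\g$ (possible only when $2r_\g<\tfrac12$), then $|\g(s)-\g(t)|\ge c_\g:=\inf\{|\g(s')-\g(t')|:d_{\R/\Z}(s',t')\ge 2r_\g\}$, which is strictly positive by compactness and injectivity of $\g$ (set $c_\g:=+\infty$ if no such pair exists), so $|\g_\epsilon(s)-\g_\epsilon(t)|\ge c_\g-2|\epsilon|\|h\|_{L^\infty}>0$ for $|\epsilon|$ small. In particular $\g_\epsilon$ is injective, hence an immersed knot.

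Now set $r_*:=\min\{r_\g,\tfrac15 c_\g\}$ (so $r_*=r_\g$ if $c_\g=+\infty$) and let $x=\g_\epsilon(s)\ne y=\g_\epsilon(t)$ in $M_\epsilon$ with $0<|x-y|\le 2r_*$; by injectivity $s\ne t$, and write $\sigma:=d_{\R/\Z}(s,t)$. Since $M_\epsilon$ is a simple closed curve, $d_{M_\epsilon}(x,y)$ is at most the length of the shorter parameter arc, i.e.\@ $d_{M_\epsilon}(x,y)\le(1+|\epsilon|L)\sigma$. The alternative $\sigma>2r_\g$ cannot occur here: the second estimate above gives $|x-y|\ge c_\g-2|\epsilon|\|h\|_{L^\infty}>2r_*$ once $|\epsilon|$ is small, contradicting $|x-y|\le 2r_*$. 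Hence $\sigma\le 2r_\g$, and the first estimate yields
\[
\frac{d_{M_\epsilon}(x,y)}{|x-y|}\ \le\ \frac{(1+|\epsilon|L)\sigma}{\bigl(\tfrac1g-|\epsilon|L\bigr)\sigma}\ =\ \frac{g(1+|\epsilon|L)}{1-g|\epsilon|L}\ .
\]
The right-hand side tends to $g<g_3$ as $\epsilon\to 0$, so it stays strictly below $g_3$ for $|\epsilon|\le\epsilon_1$ with some $\epsilon_1>0$; thus $\delta(M_\epsilon,r_*)<g_3$.

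It remains to collect two easy facts and assemble. On the one hand, $\dist_\HH(M,M_\epsilon)\le\|\g-\g_\epsilon\|_{L^\infty}=|\epsilon|\,\|h\|_{L^\infty}<\tfrac14 r_*=\tfrac14\min\{r_\g,r_*\}$ for $|\epsilon|$ small. On the other hand, $\g_\epsilon$ is a \emph{tame} knot: the uniform bound $\delta(M_\epsilon,r_*)<g_3$ forces sub-arcs of $M_\epsilon$ of diameter below $2r_*$ to be unknotted, so inscribing a polygon with vertices at sufficiently small consecutive parameter distance yields an ambiently isotopic polygonal knot — each short sub-arc together with its chord bounding an embedded disk that meets $M_\epsilon$ only in that arc, with distinct disks disjoint for small mesh — and this local unknottedness is exactly the structural input behind Theorem~\ref{thm:isomorphic} and Corollary~\ref{cor:equiv-knots}. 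Letting $\epsilon_{\g,h}>0$ be smaller than all the finitely many thresholds met above, Corollary~\ref{cor:equiv-knots} applied with admissible scales $r_\g$ for $\g$ and $r_*$ for $\g_\epsilon$ gives $\g_\epsilon\sim\g$ for all $|\epsilon|\le\epsilon_{\g,h}$. I expect the tameness of $\g_\epsilon$ to be the genuine obstacle: finite \emph{global} distortion is compatible with wildness (cf.\@ the examples recalled in the introduction), so wildness has to be ruled out purely from the control of local distortion at a fixed scale — precisely the geometric heart imported from the preceding sections.
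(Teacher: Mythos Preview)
Your overall strategy matches the paper's: establish a local distortion bound for $\g_\epsilon$ at a scale comparable to $r_\g$, control the Hausdorff distance, and invoke Corollary~\ref{cor:equiv-knots}. Your central distortion estimate $\dfrac{g(1+|\epsilon|L)}{1-g|\epsilon|L}$ is exactly the paper's inequality~\eqref{eq:distortion-eps}. The one organizational difference is minor: the paper avoids your case split on whether $\sigma\le 2r_\g$ and the auxiliary constant $c_\g$ by simply taking the scale $\hat r_\g:=\tfrac12 r_\g$ and noting that $|\g_\epsilon(x)-\g_\epsilon(y)|\le 2\hat r_\g$ together with $|\epsilon|\,\|h'\|_{L^\infty}\le r_\g$ (built into the choice of $\epsilon_0$ in~\eqref{eq:eps2}) already forces $|\g(x)-\g(y)|\le 2r_\g$, so the distortion hypothesis on $\g$ applies directly without singling out long arcs.

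On tameness of $\g_\epsilon$: you correctly identify this as the genuine obstacle, and your disk-based sketch is not a proof (as you yourself concede). But the paper's own proof does not address this point either---it applies Corollary~\ref{cor:equiv-knots} without verifying that $\g_\epsilon$ is tame, and the authors explicitly leave open (in the paragraph following Corollary~\ref{cor:fractional-stability}) whether the local distortion bound alone implies tameness. So your argument is at least as complete as the paper's on this count; in the paper's actual applications (Section~\ref{sec:symm-critical}) the perturbed curves lie in $W^{3/2,2}$ and are therefore tame via finite M\"obius energy, which sidesteps the issue there.
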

Note that Lipschitz continuous curves are differentiable a.e., and we call such curves
\emph{immersed} if $v_\g:=\essinf_{u\in\R/\Z}|\g'(u)|>0$.

For $C^1$-knots $\g$ one has, of course, $\delta(\g(\R/\Z),r)\to 1 $ as $r\to 0$, so that
one finds a scale $r_\g$ sufficiently small so that $\g$ has 
local distortion strictly below $g_3$.
In fact, this observation implies a weaker version of Theorem~\ref{thm:C1-stability} where ambient isotopy is replaced by knot equivalence.
But we will prove here
that a fractional Sobolev regularity (below $C^1$) suffices, 
which leads to the following fractional stability result for knot equivalence.
\begin{corollary}[Fractional stability of knot equivalence]
\label{cor:fractional-stability}
For any arclength parametrized knot $\g\in W^{\frac32,2}(\R/\Z,\R^3)$ there exists
$\epsilon_\g>0$ such that every arclength parametrized
curve $\eta\in B_{\epsilon_\g}(\g)\subset
W^{\frac32,2}(\R/\Z,\R^3)$ is a knot equivalent to $\g$.
\end{corollary}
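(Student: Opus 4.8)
The plan is to reduce the statement to Corollary~\ref{cor:equiv-knots}, applied to $\g$ and to an arbitrary nearby competitor $\eta$: one has to produce a single scale at which \emph{both} curves have local distortion strictly below $g_3$, while their Hausdorff distance stays below a quarter of that scale, and the crucial feature is that all of this must be uniform over all $\eta$ in a small $W^{3/2,2}$-ball. The analytic engine is a chord--arc estimate for arclength curves of class $W^{3/2,2}$. Since $\g$ (and every competitor $\eta$) is parametrized by arclength, $\g'\in W^{1/2,2}(\R/\Z,\R^3)$ and its Gagliardo seminorm $[\g']_{W^{1/2,2}}^{2}=\int_{\R/\Z}\!\!\int_{\R/\Z}\frac{|\g'(s)-\g'(t)|^{2}}{d(s,t)^{2}}\,ds\,dt$ is finite, $d(\cdot,\cdot)$ denoting geodesic distance on $\R/\Z$. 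First I would record: for $u\ne v$ on the shorter arc, $\ell:=d(u,v)\le\tfrac12$, using $\g(v)-\g(u)=\int_{u}^{v}\g'$, $|\g'|\equiv1$, Cauchy--Schwarz, and $\int_{0}^{\ell}\!\!\int_{0}^{\ell}|s-t|^{2}\,ds\,dt=\ell^{4}/6$, one gets
\[
0\;\le\;1-\frac{|\g(v)-\g(u)|}{\ell}\;\le\;\frac{1}{\ell^{2}}\int_{u}^{v}\!\!\int_{u}^{v}|\g'(s)-\g'(t)|\,ds\,dt\;\le\;\frac{1}{\sqrt6}\,F_\g(u,v),
\]
where $F_\g(u,v):=\bigl(\int_{u}^{v}\!\!\int_{u}^{v}\tfrac{|\g'(s)-\g'(t)|^{2}}{|s-t|^{2}}\,ds\,dt\bigr)^{1/2}$; the same inequality holds verbatim for any arclength $\eta\in W^{3/2,2}$.

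The key step — and the point I expect to be the main obstacle — is to upgrade finiteness of $[\g']_{W^{1/2,2}}$ to the \emph{uniform} decay $\omega_\g(\delta):=\sup_{d(u,v)\le\delta}F_\g(u,v)\to0$ as $\delta\to0^{+}$. For a fixed pair this is just absolute continuity of the integral, but the supremum over all short pairs needs a compactness argument: the squares $[u,v]^{2}$ with $d(u,v)\le\delta$ lie inside $\sqrt2\,\delta$-balls around diagonal points, the map $u\mapsto\int\!\!\int_{B_{\sqrt2\delta}(u,u)}\frac{|\g'(s)-\g'(t)|^{2}}{|s-t|^{2}}$ is continuous and decreases pointwise to $0$, so by Dini's theorem the decay is uniform on the compact circle. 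Granting this, I fix $\delta_{1}=\delta_{1}(\g)>0$ with $\tfrac{1}{\sqrt6}\,\omega_\g(\delta_{1})<\tfrac13(1-g_{3}^{-1})$ (possible since $g_{3}>1$), set $\psi_\g:=\inf\{|\g(u)-\g(v)|:d(u,v)\ge\delta_{1}\}$, which is positive because $\g$ is an embedded knot and $\R/\Z$ is compact, and put $r_\g:=\tfrac15\psi_\g$.

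For a competitor $\eta$ with $\|\eta-\g\|_{W^{3/2,2}}<\eps$ I would use two soft inputs: the embedding $W^{3/2,2}(\R/\Z,\R^3)\hookrightarrow C^{0}$, giving $\|\eta-\g\|_{C^{0}}\le C_{0}\eps$, hence $\dist_\HH(\g(\R/\Z),\eta(\R/\Z))\le C_{0}\eps$ and $\inf_{d(u,v)\ge\delta_{1}}|\eta(u)-\eta(v)|\ge\psi_\g-2C_{0}\eps$; and the elementary bound $F_\eta(u,v)\le F_\g(u,v)+[\eta'-\g']_{W^{1/2,2}}\le F_\g(u,v)+C_{1}\eps$ \emph{for every} pair $u,v$, valid because $F_{\eta-\g}(u,v)$ is a sub-integral of the full seminorm, which is controlled by $\|\eta-\g\|_{W^{3/2,2}}$. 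Choosing $\eps=\eps_\g$ so small that $\tfrac{C_{1}\eps}{\sqrt6}<\tfrac13(1-g_{3}^{-1})$, $2C_{0}\eps\le\tfrac12\psi_\g$ and $C_{0}\eps<\tfrac14 r_\g$, the chord--arc estimate for $\eta$ yields $\frac{|\eta(v)-\eta(u)|}{d(u,v)}>g_{3}^{-1}$ whenever $0<d(u,v)\le\delta_{1}$, while $|\eta(u)-\eta(v)|\ge\tfrac12\psi_\g$ whenever $d(u,v)\ge\delta_{1}$. In particular $\eta$ is injective, hence an embedded curve, and bi-Lipschitz onto its image with constants depending only on $\g$; having local distortion below $g_{3}$ at a fixed positive scale it has no wild points (a wild point would force arbitrarily large local distortion at arbitrarily small scales), so it is a tame knot. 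Since $\eta$ is injective and arclength, the intrinsic distance satisfies $d_{\eta(\R/\Z)}(\eta(u),\eta(v))=d(u,v)$; thus for $x=\eta(u)$, $y=\eta(v)$ with $0<|x-y|\le2r_\g$ one necessarily has $d(u,v)<\delta_{1}$ (otherwise $|x-y|\ge\tfrac12\psi_\g>2r_\g$), whence $\frac{d_{\eta(\R/\Z)}(x,y)}{|x-y|}=\frac{d(u,v)}{|\eta(u)-\eta(v)|}<g_{3}$, i.e.\ $\eta$ has local distortion strictly below $g_{3}$ at the scale $r_\g$. The case $\eta=\g$, $\eps=0$ shows the same for $\g$ at the same scale, and $\dist_\HH(\g(\R/\Z),\eta(\R/\Z))\le C_{0}\eps<\tfrac14 r_\g$, which is $\tfrac14$ of the common admissible scale; Corollary~\ref{cor:equiv-knots} applied to the tame knots $\g,\eta$ then gives $\g\sim\eta$, which is the assertion.
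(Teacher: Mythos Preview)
Your overall strategy coincides with the paper's: control the chord--arc ratio on short arcs by the local $W^{1/2,2}$-seminorm of the tangent (your inequality $1-|\eta(v)-\eta(u)|/\ell\le F_\eta(u,v)/\sqrt6$ plays the same role as the paper's Corollary~\ref{cor:seminorm-bilip}), bound long arcs by the positive minimum $\psi_\g$, transfer both bounds to $\eta$ via the triangle inequality for the seminorm and the Sobolev embedding into $C^0$, and then invoke Corollary~\ref{cor:equiv-knots} at a common scale. This part is correct and essentially the paper's argument with slightly different constants; your Dini-type justification of the uniform decay $\omega_\g(\delta)\to0$ is a legitimate alternative to the paper's direct appeal to absolute continuity of the double integral.

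There is, however, one genuine gap: your tameness argument. You assert that ``a wild point would force arbitrarily large local distortion at arbitrarily small scales'', but this is precisely the implication the paper flags as \emph{open} immediately after Corollary~\ref{cor:fractional-stability} (``Open at this point \ldots\ is if the bound on local distortion \ldots\ already implies tameness''). Since $\delta(M,r)$ is non-decreasing in $r$, a bound at one scale automatically persists at smaller scales, so your heuristic does not even point in the right direction. The correct (and easy) fix is the one the paper uses at the end of its proof: any arclength parametrized embedded curve in $W^{3/2,2}(\R/\Z,\R^3)$ has finite M\"obius energy by \cite[Theorem~1.1]{blatt_2012a}, and finite M\"obius energy forces tameness by \cite[Theorem~4.1]{freedman-etal_1994}. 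This applies to both $\g$ and every $\eta\in B_{\epsilon_\g}(\g)$ once you have shown $\eta$ is embedded, and then Corollary~\ref{cor:equiv-knots} becomes applicable.
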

The preceding statement can be shown to hold true also
if $\g$ and
$\eta$ are only immersed and not necessarily arclength
parametrized, but we will not discuss the details here.
Open at this point, however, is if the bound on local 
distortion in Corollaries \ref{cor:equiv-knots} and 
\ref{cor:lipschitz-stability} already
implies tameness of the knots, or some improved regularity.

\subsection{Symmetric critical points for scale-invariant knot energies}
The fractional Sobolev space $W^{\frac32, 2}(\R/\Z,\R^3)$ is the natural
energy space for  the 
\emph{M\"obius energy}
\begin{equation}\label{eq:emob}
E_\textnormal{M\"ob}(\g):=\int_{\R/\Z}\int_{\R/\Z}\left(
\frac1{|\g(s)-\g(t)|^2}-
\frac{1}{d_\g(\g(s),\g(t))^2}\right)|\g'(s)||\g'(t)|\,dsdt.
\end{equation}
Indeed, an arclength parametrized
curve $\g:\R/\Z\to\R^3$ has finite M\"obius energy  
if and only if 
$\g$ is embedded and of
class $W^{\frac32,2}(\R/\Z,\R^3)$ as shown in \cite{blatt_2012a} 
(see 
\cite[Theorem~3.2 (i)]{gilsbach-vdm_2018} for an explicit a priori bound
on the fractional seminorm in terms of  the energy). 
Freedman et al.\@ managed to
use the M\"obius invariance of $\emob$ to find minimizing knots in any given prime
knot class \cite[Theorem~4.3]{freedman-etal_1994}.
Critical points of the M\"obius energy were
shown to be smooth and even real-analytic 
\cite{blatt-etal_2016,blatt-vorderobermeier_2019} 
without using the invariance of $\emob$ under 
M\"obius transformations. 
But how can we find critical knots  possibly different from the absolute minimizers? 
One option is to use symmetry as carried out in \cite{gilsbach-vdm_2018} for 
O'Hara's
energies above scale-invariance whose energy spaces \emph{do} 
embed into $C^1$.

As in that 
paper we consider here for the M\"obius energy a discrete rotational symmetry. To be
more precise, for $p\in\N$, $p\ge 2$, we call a 
curve $\g\in C^0(\R/\Z,\R^3)$
\emph{($p$-rotationally) symmetric} if and only if
\begin{equation}
\label{eq:rot-symm}
\textstyle
\Rot_{\frac{2\pi}p}\circ\g(s -\frac1{p})=\g(s)\quad\Foa s\in\R/\Z, 
\end{equation}
where $\Rot_{\beta}\in SO(3)$ denotes the rotation about a fixed axis, say,
the $e_3$-coordinate axis, with rotation angle $\beta$.
Define for a given knot equivalence class $\mathcal{K}$ the 
\emph{($p$-rotationally) symmetric
subset}
\begin{equation}\label{eq:symmetric-subset}
\Sigma_p(\mathcal{K}):=\{\eta\in C^{0,1}(\R/\Z,\R^3):|\eta'|>0
\,\,\textnormal{a.e.},\, [\eta]=\mathcal{K},\,
\eta\,\,\textnormal{satisfies}\,\eqref{eq:rot-symm}\}.
\end{equation}
\begin{theorem}[Symmetric critical prime knots]
\label{thm:symm-crit-prime}
If the symmetric subset $\Sigma_p(\mathcal{K})$
of a prime knot equivalence class $\mathcal{K}$ contains a knot with
finite M\"obius energy or, equivalently, a 
$W^{3/2,2}$-knot,
then
there exists an arclength parametrized knot $\g\in C^\omega(\R/\Z,\R^3)
\cap\Sigma_p(\mathcal{K})$
such that
\begin{equation}\label{eq:minimizing}
\emob(\g)=\inf_{\Sigma_p(\mathcal{K})}\emob(\cdot),
\end{equation}
and  $\g$ is a critical knot, i.e., $d\emob (\g)[h]=0$ for all
$h\in W^{\frac32,2}(\R/\Z,\R^3)$. Moreover,
all torus knot classes $\mathcal{T}(a,b)$ for co-prime $a,b\in\Z\setminus\{0,\pm 1\}$
contain at least two different critical knots for the M\"obius energy.
\end{theorem}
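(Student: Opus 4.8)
The plan is to obtain a minimizer of $\emob$ on $\Sigma_p(\mathcal{K})$ by the direct method, to upgrade it to an \emph{unconstrained} critical point of $\emob$ via a symmetric‑criticality argument, to invoke the known regularity theory for critical knots, and finally to derive the torus‑knot statement by playing the symmetric minimizer off against the minimizer of Freedman et al.

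\textbf{Step 1: a symmetric minimizer.} Choose a minimizing sequence $(\g_k)\subset\Sigma_p(\mathcal{K})$ for $\emob$. Since $\emob$ is invariant under reparametrization and under scaling, and since \eqref{eq:rot-symm} forces the speed $|\g_k'|$ to be $\tfrac1p$‑periodic, we may reparametrize each $\g_k$ by arclength and rescale to unit length, so that $|\g_k'|\equiv1$, without destroying \eqref{eq:rot-symm}. By the energy‑space characterization \cite{blatt_2012a}, with the explicit a priori bound of \cite[Theorem~3.2(i)]{gilsbach-vdm_2018}, the uniform energy bound yields a uniform bound on $\|\g_k\|_{W^{3/2,2}}$, so along a subsequence $\g_k\rightharpoonup\g$ weakly in $W^{3/2,2}(\R/\Z,\R^3)$. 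Then $\g_k\to\g$ in $C^0$, and, applying the compact embedding $W^{1/2,2}(\R/\Z)\hookrightarrow L^2(\R/\Z)$ to the derivatives, $\g_k'\to\g'$ strongly in $L^2$ and a.e.; hence $|\g'|\equiv1$ a.e.\ and $\g$ still obeys \eqref{eq:rot-symm}. By the compactness theorem for knot equivalence classes under weak $W^{3/2,2}$‑convergence (equivalently, the uniform $W^{3/2,2}$‑bound furnishes a uniform admissible scale, so Corollary~\ref{cor:equiv-knots} applied to $\g$ and $\g_k$ for $k$ large gives $\g_k\sim\g$), the limit satisfies $[\g]=\mathcal{K}$; thus $\g\in\Sigma_p(\mathcal{K})$. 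Finally the integrand of \eqref{eq:emob} is nonnegative (chord $\le$ arc), the intrinsic distance of an embedded unit‑length arclength curve on $\R/\Z$ equals the ambient distance on $\R/\Z$ and is therefore independent of $k$, and $\g_k\to\g$, $\g_k'\to\g'$ a.e., so Fatou's lemma gives $\emob(\g)\le\liminf_k\emob(\g_k)=\inf_{\Sigma_p(\mathcal{K})}\emob$, which is \eqref{eq:minimizing}.

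\textbf{Step 2: unconstrained criticality and regularity.} The curves in $W^{3/2,2}(\R/\Z,\R^3)$ satisfying \eqref{eq:rot-symm} form a closed linear subspace $X_p$. Recalling that $\emob$ is of class $C^1$ on the open set of embedded $W^{3/2,2}$‑curves, and that by the stability of knot equivalence (Corollary~\ref{cor:fractional-stability} together with the extension to immersed non‑arclength curves mentioned there) all embedded curves in $X_p$ sufficiently $W^{3/2,2}$‑close to $\g$ still lie in $\mathcal{K}$, we see that $\g$ minimizes $\emob$ on an open neighbourhood of $\g$ inside the linear space $X_p$; hence $d\emob(\g)[h]=0$ for every $h\in X_p$. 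For arbitrary $h\in W^{3/2,2}(\R/\Z,\R^3)$ consider the cyclic group of order $p$ generated by $\Phi\colon\eta\mapsto\Rot_{-2\pi/p}\circ\eta(\cdot+\tfrac1p)$, whose fixed‑point set is $X_p$ and under which $\emob$ is invariant (being a composition of an isometry and a reparametrization); since $\g$ is a fixed point, the chain rule gives $d\emob(\g)[\Phi^jh]=d\emob(\g)[h]$ for all $j$, so averaging yields $d\emob(\g)[h]=d\emob(\g)[h_{\mathrm{sym}}]$ with $h_{\mathrm{sym}}:=\tfrac1p\sum_{j=0}^{p-1}\Rot_{-2\pi j/p}\,h(\cdot+\tfrac jp)\in X_p$, and the right‑hand side vanishes by the previous sentence. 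Thus $\g$ is a critical knot, and by the regularity theory for critical points of the M\"obius energy \cite{blatt-etal_2016,blatt-vorderobermeier_2019} it is real‑analytic; since $|\g'|\equiv1$ and \eqref{eq:rot-symm} persist, $\g\in C^\omega(\R/\Z,\R^3)\cap\Sigma_p(\mathcal{K})$.

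\textbf{Step 3: torus knots.} Let $\mathcal{K}=\mathcal{T}(a,b)$ with coprime $a,b\in\Z\setminus\{0,\pm1\}$; torus knots are prime, so by \cite[Theorem~4.3]{freedman-etal_1994} there is a minimizer $\g_0$ of $\emob$ over $\mathcal{T}(a,b)$, which by \cite{blatt-etal_2016} is a critical knot — the first one. For the second, one realizes $\mathcal{T}(a,b)$, for a suitable $p$, by a torus‑knot representative parametrized so as to satisfy \eqref{eq:rot-symm}, so that $\Sigma_p(\mathcal{T}(a,b))$ is nonempty and contains knots of finite M\"obius energy; the first part of the theorem then produces a $p$‑rotationally symmetric critical knot $\g_1\in\Sigma_p(\mathcal{T}(a,b))$. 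The point I expect to be the main obstacle is showing that $\g_1$ and $\g_0$ are genuinely \emph{different} critical knots: one has to argue that either the constrained minimum $\emob(\g_1)$ strictly exceeds the absolute minimum $\emob(\g_0)$, or else, if these coincide, that the absolute minimizer cannot carry the rotational symmetry of $\g_1$ — rotational symmetry being preserved by isometries but destroyed by generic M\"obius transformations — so that $\g_0\neq\g_1$; in either case $\mathcal{T}(a,b)$ contains at least two distinct critical knots for the M\"obius energy.
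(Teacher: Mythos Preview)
Your Step~1 contains a genuine gap at exactly the point the paper was written to address. You assert that ``the uniform $W^{3/2,2}$-bound furnishes a uniform admissible scale'' so that Corollary~\ref{cor:equiv-knots} applies to $\g$ and $\g_k$ for large $k$. This is false in general. A uniform bound on $\lfloor\g_k'\rfloor_{1/2,2}$ does \emph{not} yield a uniform $r>0$ with $\lfloor\g_k'\rfloor_{1/2,2,B_r(x)}$ small for all $x$ and all $k$: the seminorms may concentrate, and the admissible scales $r_{\g_k}$ may shrink to zero much faster than $\|\g_k-\g\|_{L^\infty}$. (What you would need is uniform absolute continuity of the double integrals defining the seminorms, and weak $W^{3/2,2}$-convergence does not provide that.) The paper's proof confronts this directly: it interprets the seminorm integrands as Radon measures, passes to a weak limit measure, identifies a finite set of concentration points, and then performs a \emph{harmonic substitution}---replacing the subarcs of $\g_k$ near the concentration points by straight segments, with endpoints chosen via a maximal-function argument so that the local distortion of the modified curves $\tilde\g_k$ stays below $g_3$ at a scale \emph{independent of $k$}. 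Only then can Corollary~\ref{cor:equiv-knots} be applied (to $\g$ and $\tilde\g_k$), together with an argument that the substitution did not change the knot type (here primality of $\mathcal K$ and the $C^1$-assumption on the sequence, obtained by mollification, are used). This is the content of Sections~\ref{sec:harmonic} and~\ref{sec:compactness}, packaged as Theorem~\ref{thm:fractional-compactness}.

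Your Step~2 is essentially the paper's symmetric-criticality argument, with a cosmetic difference: you invoke the (extended) fractional stability to see that nearby symmetric curves stay in $\mathcal K$, whereas the paper uses the Lipschitz stability of Corollary~\ref{cor:lipschitz-stability} for the perturbations $\G+\epsilon h_{\mathrm{sym}}$. Your Step~3, however, diverges from the paper and is incomplete as you yourself flag. The paper does \emph{not} compare a symmetric minimizer with the Freedman--He--Wang absolute minimizer; instead it applies the first part twice, with $p_1=|a|$ and $p_2=|b|$, producing two symmetric critical knots with different rotational symmetries, and then cites \cite[Theorem~1.2]{gilsbach-vdm_2018} for the argument that an $|a|$-symmetric and a $|b|$-symmetric representative of $\mathcal T(a,b)$ with coprime $|a|,|b|\ge2$ cannot coincide. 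Your proposed dichotomy (either the constrained infimum is strictly larger, or the absolute minimizer lacks the symmetry) is plausible but you give no mechanism to rule out a rotationally symmetric absolute minimizer; the paper's route avoids this difficulty entirely.
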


One may prove a similar result for other scale-invariant knot energies
with energy spaces $W^{1+1/p,p}$
such as those from O'Hara's family; cf.\@ \cite{ohara_1992a,
blatt-etal_2022b}.

It might be easier to check the symmetries for polygonal
curves, so instead of a knot with finite M\"obius energy
we might require in Theorem \ref{thm:symm-crit-prime} that
$\Sigma_p(\mathcal{K})$ contains a polygonal knot.
Indeed, 
smoothing the corners such that the symmetry is preserved 
yields an immersed embedded smooth
curve in $\Sigma_p(\mathcal{K})$
which has finite Möbius energy due 
to~\cite{blatt_2012a} and the energy's invariance under reparametrization. We do not know, on the other hand, if it suffices
to assume in Theorem \ref{thm:symm-crit-prime} only
that $\mathcal{K}$ is tame and that
$\Sigma_p(\mathcal{K})$ is nonempty.

We mentioned before that two equivalent arclength parametrized tame
knots $\g_1,\g_2:\R/\Z\to\R^3$ are either already ambiently isotopic, or
$\g_1$ is ambiently isotopic to the mirror image $\g_2^*$ of $\g_2$.
The $O(3)$-invariance of the M\"obius energy therefore
allows us to 
strengthen Theorem~\ref{thm:symm-crit-prime} to ambient isotopy
classes.
\begin{corollary}[Symmetric critical prime knots in ambient isotopy
classes] \label{cor:symm-crit-prime}
Let $\mathcal{K}$ be an ambient isotopy class of prime knots and $\Sigma_p(\mathcal{K})$ be the class of $p$-rotationally symmetric regular Lipschitz
curves whose ambient isotopy class equals $\mathcal{K}$. If
$\Sigma_p(\mathcal{K})$ contains a knot of
finite M\"obius energy, then there is an arclength parametrized
minimizing real analytic knot in $\Sigma_p(\mathcal{K})$ such that
\eqref{eq:minimizing} holds, and this knot is $\emob$-critical. If,
more specifically,
$\mathcal{K}$ equals the ambient isotopy class of torus knots,
$\mathcal{T}(a,b)$ for co-prime $a,b\in\Z\setminus\{0,\pm 1\}$, then there
are at least two such $\emob$-critical knots.
\end{corollary}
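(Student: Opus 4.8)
The plan is to deduce the corollary from Theorem~\ref{thm:symm-crit-prime} by enlarging the ambient isotopy class $\mathcal{K}$ to the knot equivalence class containing it, applying that theorem there, and then transporting the resulting symmetric critical knot back into $\mathcal{K}$ by a reflection, using that $\emob$ is invariant under the full group $O(3)$. To set this up I would write $\mathcal{K}^{*}$ for the mirror image ambient isotopy class of $\mathcal{K}$ and $\widehat{\mathcal{K}}$ for the knot equivalence class of $\mathcal{K}$. As recalled above, two equivalent tame arclength parametrized knots are either ambiently isotopic or one is ambiently isotopic to the other one's mirror image, so $\widehat{\mathcal{K}}=\mathcal{K}\cup\mathcal{K}^{*}$ and hence
\[
\Sigma_p(\widehat{\mathcal{K}})=\Sigma_p(\mathcal{K})\cup\Sigma_p(\mathcal{K}^{*}),
\]
where $\Sigma_p$ on the left is understood in the knot equivalence sense of~\eqref{eq:symmetric-subset} and on the right in the ambient isotopy sense of the corollary. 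Since $\mathcal{K}$ is prime, so is $\widehat{\mathcal{K}}$, and the hypothesis that $\Sigma_p(\mathcal{K})$ contains a knot of finite M\"obius energy passes to $\Sigma_p(\widehat{\mathcal{K}})$. Thus Theorem~\ref{thm:symm-crit-prime} applies to $\widehat{\mathcal{K}}$ and yields an arclength parametrized real analytic knot $\g\in\Sigma_p(\widehat{\mathcal{K}})$ with $\emob(\g)=\inf_{\Sigma_p(\widehat{\mathcal{K}})}\emob$ that is $\emob$-critical.

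Next I would transfer $\g$ into $\mathcal{K}$ by the reflection $R:=\operatorname{diag}(1,1,-1)$ in the plane orthogonal to the rotation axis $e_3$. Because $R$ commutes with every $\Rot_\beta$, the curve $R\circ\g$ again satisfies~\eqref{eq:rot-symm}; it is again arclength parametrized and real analytic, and $\emob(R\circ\g)=\emob(\g)$ by $O(3)$-invariance. Since $R$ is orientation reversing and any two reflections of $\R^3$ differ by a rotation, $R$ interchanges $\mathcal{K}$ and $\mathcal{K}^{*}$, hence is an involutive, $\emob$-preserving bijection of $\Sigma_p(\widehat{\mathcal{K}})$ swapping $\Sigma_p(\mathcal{K})$ and $\Sigma_p(\mathcal{K}^{*})$; in particular $\inf_{\Sigma_p(\mathcal{K})}\emob=\inf_{\Sigma_p(\widehat{\mathcal{K}})}\emob$. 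Setting $\tilde\g:=\g$ if the ambient isotopy class of $\g$ equals $\mathcal{K}$ and $\tilde\g:=R\circ\g$ otherwise, we obtain an arclength parametrized knot $\tilde\g\in C^\omega(\R/\Z,\R^3)\cap\Sigma_p(\mathcal{K})$ realizing~\eqref{eq:minimizing} over $\Sigma_p(\mathcal{K})$. Criticality survives the transfer because $d\emob(R\circ\g)[h]=d\emob(\g)[R^{-1}h]=0$ for all $h\in W^{\frac32,2}(\R/\Z,\R^3)$, again by $O(3)$-invariance; this settles the first assertion.

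For $\mathcal{K}=\mathcal{T}(a,b)$ with co-prime $a,b\in\Z\setminus\{0,\pm1\}$ I would apply the torus knot part of Theorem~\ref{thm:symm-crit-prime} to $\widehat{\mathcal{K}}$ to obtain two distinct $\emob$-critical symmetric knots and then apply the reflection transfer above to each of them. It then remains to check that the two images are still distinct: this is immediate when the two knots of Theorem~\ref{thm:symm-crit-prime} already carry different energies (since $R$ preserves $\emob$), and otherwise it follows from the very mechanism that separates them there, as $R$ fixes the rotation axis and therefore respects both rotational symmetry types involved. I expect this last bookkeeping — choosing the reflection so that it simultaneously realizes the mirror operation and preserves the symmetry axis, and verifying that mirroring does not collapse the two torus knot critical points into one — to be the only point needing genuine care; all of the analysis is already packaged in Theorem~\ref{thm:symm-crit-prime}.
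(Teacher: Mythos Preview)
Your proposal is correct and follows exactly the approach the paper indicates in the sentence immediately preceding the corollary: pass from the ambient isotopy class $\mathcal{K}$ to the enclosing knot equivalence class, apply Theorem~\ref{thm:symm-crit-prime} there, and use the $O(3)$-invariance of $\emob$ together with the fact that a knot equivalence class of tame knots splits as $\mathcal{K}\cup\mathcal{K}^{*}$ to bring the minimizer back. Your choice of the reflection $R=\operatorname{diag}(1,1,-1)$, which commutes with all $\Rot_\beta$ and hence preserves \eqref{eq:rot-symm}, is precisely the right concrete realization, and the paper gives no further detail beyond what you have written.

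For the torus knot bookkeeping you flag: the two critical knots in the proof of Theorem~\ref{thm:symm-crit-prime} arise as minimizers in $\Sigma_{a}(\widehat{\mathcal{K}})$ and $\Sigma_{b}(\widehat{\mathcal{K}})$ respectively, and are distinguished in \cite{gilsbach-vdm_2018} by their incompatible rotational symmetries. Since your reflection $R$ preserves each $\Sigma_{p}$, the transferred knots $\tilde\Gamma_1\in\Sigma_a(\mathcal{K})$ and $\tilde\Gamma_2\in\Sigma_b(\mathcal{K})$ retain those symmetries, so the same distinctness argument applies verbatim; no collapse can occur.
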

Kim and Kusner use in \cite{kim-kusner_1993}
symmetric criticality to construct $\emob$-critical torus 
knots (even with explicit formulas for their energy values). 
Their numerical experiments suggest, that
for (co-prime) parameters $a,b$ with $|a|,|b|>2$ 
these critical
points seize to be stable. Analytically, this remains an
open question.

\subsection{Strategy and outline of the paper}\label{sec:strategy}
In Section~\ref{sec:distortion} we analyze the effects
of the smallness
condition on local distortion  on general
closed, path-connected subsets $M\subset\R^n$. For 
each $x$ near (but not in) $M$ we find
a direction that ``points away from $M$''. These
directions are glued together in Section~\ref{sec:pseudo}
with a partition of unity to construct a kind of
pseudo-gradient vector field for the distance function $\dist(\cdot,M)$.  We solve
uniquely and globally two different flow equations with respect
to this vector field to define homotopies in the complement
$\R^n\setminus M$ of $M$. These mappings are then used in Section~\ref{sec:iso} 
for $M:=M_1$ and $M_2$ in combination with the sets' 
controlled local distortion
to construct  an
isomorphism between the fundamental groups
$\pi_1(\R^n\setminus M_1)$ and $\pi_1(\R^n\setminus M_2)$ as stated in 
Theorem~\ref{thm:isomorphic}. 

We use in Section~\ref{sec:stability} the immediate Corollary
\ref{cor:equiv-knots} to prove the Lipschitz stability and fractional stability
of knot equivalence, i.e., Corollaries \ref{cor:lipschitz-stability} and
\ref{cor:fractional-stability}. The first one relies on the fact that for curves the smallness condition on local 
distortion  
is stable with respect to Lipschitz perturbations. 
For
the fractional stability we recall in Corollary \ref{cor:seminorm-bilip}
that a small local fractional semi-norm of the
tangent implies a local bilipschitz estimate, which can be turned into a local
distortion bound for embeddings.

More analytical work is required for the existence
of symmetric $\emob$-critical prime knots as stated in
Theorem~\ref{thm:symm-crit-prime}. Since minimizing sequences converge only
weakly in $W^{\frac32,2}$ to a 
limit curve, we can \emph{not} relie on the (strong) 
$W^{\frac32,2}$-stability of Corollary \ref{cor:fractional-stability} to
guarantee the prescribed knot equivalence class in the limit. Nevertheless, 
the limiting
curve $\g$ has finite M\"obius energy because of the energy's lower-semicontinuity, 
so that we have
an arclength parametrized tame
$W^{\frac32,2}$-knot $\g$ as a limit. Exactly as in the proof of
Corollary \ref{cor:fractional-stability} we therefore find a scale  at which
$\g$ has its local distortion strictly
below the universal constant
$g_3$. The same holds true for all members of
the minimizing sequence, because each of them is an arclength parametrized
$W^{\frac32,2}$-knot as well. But the $L^\infty$-convergence of that sequence
towards $\g$ might be too slow to satisfy the assumptions of Corollary \ref{cor:equiv-knots}
for any sequence member.
In other words, the scales at which 
the members of the minimizing sequence have local 
distortion below $g_3$
might decay to zero much faster than their $L^\infty$-distance to $\g$. 
The analytic reason for that behaviour lies in the fact that we do \emph{not}
have uniform absolute continuity of the integrals defining the fractional
semi-norms of the tangents of the minimizing sequence. So, we are led to 
study possible concentrations in these semi-norms to cut them out, and replace
the corresponding subarcs of the curves by straight segments. Since such a substitution
generally destroys the $W^{\frac32,2}$-regularity\footnote{\label{foot:harmonic-substitution}For arclength parametrized embedded curves, $W^{\frac32,2}$-regularity is equivalent to finite Möbius energy according to~\cite{blatt_2012a}.
Due to~\cite[Corollary~1.3]{freedman-etal_1994}, such a curve has a local bilipschitz constant (cf.\@ Footnote~\ref{foot:bilip}), arbitrarily close to~$1$.
However, replacing a subarc of a curve 
by a line is likely to produce sharp corners at its endpoints
which results in a bilipschitz constant strictly larger 
than~$1$.},
a careful analytic
investigation of that
procedure based on some subtle BMO-type estimates
is required in order to keep the local
distortion of these modified curves under control. This analysis is carried out
in Section~\ref{sec:harmonic}.

In Section~\ref{sec:compactness} we interpret
the fractional semi-norms of sequences of $W^{\frac32,2}$-knots as Radon measures
on $\R/\Z\times\R/\Z$ to pass to a weak limiting  Radon measure in order to analyze
its set of concentration points.  At  a carefully chosen scale around these 
concentration points we apply the cutting procedure of 
Section~\ref{sec:harmonic}
to members of the minimizing sequence with sufficiently large
index. The result
is a weak fractional compactness theorem in the set of symmetric, arclength 
parametrized $W^{\frac32,2}$-knots (see Theorem~
\ref{thm:fractional-compactness}), which
might be of independent interest. This is the main tool in the existence proof
for symmetric $\emob$-minimizing knots in Section~\ref{sec:symm-critical}. A simple
symmetrization argument which  uses the invariance of the M\"obius energy under rotations
then leads to the fact that these minimizing knots are also critical points for
$\emob$, and therefore are real analytic. This
finalizes the proof of Theorem~\ref{thm:symm-crit-prime}.

\section{Sets with local distortion below a quarter circle}\label{sec:distortion}
Let $M\subset\R^n$ for $n\ge 3$ 
be a nonempty closed path-connected set 
with local
distortion strictly below the dimension-dependent 
constant $g_n$ defined in
\eqref{eq:dekster-jung}.
Let $r_M=r_M(n)$ be an admissible scale.
Since the function $g(x):=\frac{x/2}{\sin(x/2)}
$ is strictly increasing on $[0,\pi]$ with 
$\lim_{x\searrow 0}g(x)=1$ and
$\lim_{x\nearrow \pi}g(x)=\frac{\pi}2>g_n>
\frac{\pi}{\sqrt{8}}>1$ for all $n\in\N,$
$n\ge 3$, there exist unique angles $0<\alpha_M<\beta_n<\pi $ such that
$1\le g(\alpha_M)=\delta(M,r_M)<g_n=g(\beta_n)<\frac{\pi}2$. 
We call $\alpha_M$
(which depends on the (non-unique) scale $r_M$)
the \emph{distortion angle}
of $M$, and by direct computation, 
\begin{equation}\label{eq:beta_n}
\beta_n=2\arcsin\sqrt{\frac{n}{2n-2}}.
\end{equation}

Our first lemma bounds the set of directions generated by a 
given point $x\in\R^n
\setminus M$ with $0<d_x:=\dist(x,M)<r_M$ and all points $z\in M$ close to $x$.
More precisely, abbreviate for $\epsilon >0$ this nonempty 
set of directions by
\begin{equation}\label{eq:set-directions}
\Sigma_\epsilon(x)
:=\Big\{\frac{x-z}{|x-z|}:z\in B_{(1+\epsilon)d_x}(x)\cap M\Big\}\subset\S^{n-1}.
\end{equation}
\begin{lemma}[Diameter bound on set of directions]
\label{lem:set-directions}
Let $M\subset\R^n$ be a non-empty, closed and path-connected set satisfying
$\delta(M,r_M)<g_n$ at some scale $r_M>0$ with distortion angle $\alpha_M
=g^{-1}(\delta(M,r_M))\in 
(0,\beta_n)$.
Then for each $\alpha\in (\alpha_M,\beta_n)$ 
there is some $\epsilon_0=\epsilon_0(\alpha)
\in (0,1)$ such that
\begin{equation}\label{eq:diam-sigma}
\diam_{\S^{n-1}}\Sigma_\epsilon(x) <\alpha\quad\Foa \epsilon\in (0,\epsilon_0),\,
x\in B_{\frac{r_M}{1+\epsilon}}(M)\setminus M.
\end{equation}
\end{lemma}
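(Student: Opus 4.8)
The statement asserts that if $M$ has local distortion strictly below $g_n$, then for points $x$ just outside $M$ the set of ``near-optimal directions'' $\Sigma_\epsilon(x)$ has spherical diameter strictly below $\alpha$, for any $\alpha$ between the distortion angle $\alpha_M$ and the threshold $\beta_n$. The plan is to argue by contradiction: if two directions $\frac{x-z_1}{|x-z_1|}$ and $\frac{x-z_2}{|x-z_2|}$ in $\Sigma_\epsilon(x)$ subtend a spherical angle $\ge\alpha$, then $z_1$ and $z_2$ are ``far apart in angle as seen from $x$'' while both lying in the ball $B_{(1+\epsilon)d_x}(x)$. One then wants to show the Euclidean distance $|z_1-z_2|$ is forced to be small relative to the intrinsic distance $d_M(z_1,z_2)$ — forced, that is, into the regime where $d_M(z_1,z_2)/|z_1-z_2|$ exceeds $g(\alpha_M)=\delta(M,r_M)$, contradicting the definition of local distortion (provided $|z_1-z_2|\le 2r_M$, which the ball constraint guarantees once $d_x<r_M/(1+\epsilon)$).

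\textbf{The geometric core.} The real content is a comparison-geometry estimate. Fix $x\notin M$ with $d_x=\dist(x,M)$. Any $z\in M$ with $|x-z|\le(1+\epsilon)d_x$ lies in the closed annulus between radii $d_x$ and $(1+\epsilon)d_x$ about $x$. If $z_1,z_2$ are two such points and the angle $\theta=\angle z_1 x z_2$ at $x$ satisfies $\theta\ge\alpha$, I want a lower bound on $d_M(z_1,z_2)/|z_1-z_2|$. Since $M$ is path-connected, there is a connecting curve in $M$ from $z_1$ to $z_2$; its length is at least... well, the honest lower bound on $d_M(z_1,z_2)$ in terms of purely Euclidean data is just $d_M(z_1,z_2)\ge|z_1-z_2|$, which is useless. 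The trick must instead be to directly produce a \emph{pair of points on $M$ at chord length $\le 2r_M$ whose intrinsic distance forces a large distortion} — and this is exactly the Dekster/Jung-type statement encoded in $\beta_n$ and $g_n$. Concretely: consider the shortest curve $\sigma$ in $M$ joining $z_1$ to $z_2$, of length $L=d_M(z_1,z_2)$. Every point of $\sigma$ has distance $\ge d_x$ from $x$ (since $d_x=\dist(x,M)$ and $\sigma\subset M$), yet $z_1,z_2$ are within $(1+\epsilon)d_x$ of $x$ and separated by angle $\ge\alpha$ at $x$. A curve of length $L$ confined to avoid the open ball $B_{d_x}(x)$ but with endpoints near the sphere $\partial B_{d_x}(x)$ and angularly $\alpha$ apart must be long: in the limit $\epsilon\to0$ the endpoints lie on the sphere of radius $d_x$ at angular separation $\alpha$, and the shortest curve outside the open ball joining them is the great-circle arc of length $\alpha\, d_x$, while $|z_1-z_2|=2d_x\sin(\alpha/2)$. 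Hence $d_M(z_1,z_2)/|z_1-z_2|\to \frac{\alpha/2}{\sin(\alpha/2)}=g(\alpha)$ as $\epsilon\to0$. Since $g(\alpha)>g(\alpha_M)=\delta(M,r_M)$ (strict, as $g$ is strictly increasing and $\alpha>\alpha_M$), and the relevant chord satisfies $|z_1-z_2|\le 2(1+\epsilon)d_x<2r_M$, for all sufficiently small $\epsilon$ we get $d_M(z_1,z_2)/|z_1-z_2|>\delta(M,r_M)\ge d_M(z_1,z_2)/|z_1-z_2|$ — contradiction. This fixes $\epsilon_0=\epsilon_0(\alpha)$.

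\textbf{Why $\beta_n$ and not just great-circle geometry.} The subtlety I am glossing is that the endpoints $z_1,z_2$ need not lie \emph{on} $\partial B_{d_x}(x)$ but only in the thin annulus, and the confining ball is open, so I need a quantitative (not just limiting) version. This is where the constant $g_n$ from \eqref{eq:dekster-jung}, built from the Dekster–Jung inequality, enters: it is precisely the threshold distortion below which the ``set of directions seen from a near-optimal point'' cannot be too spread out. I would make the annulus estimate rigorous by projecting $z_1,z_2$ radially onto $\partial B_{d_x}(x)$, controlling the angular error and the length error by $O(\epsilon)$ via elementary trigonometry, and bounding the length of $\sigma$ from below by the length of its radial projection onto the sphere minus an $O(\epsilon L)$ correction (Lipschitz constant of radial projection onto $B_{d_x}(x)$ restricted to the annulus is $1+O(\epsilon)$), then invoking that a curve on $\S^{n-1}_{d_x}(x)$ joining two points at angle $\ge\al- O(\epsilon)$ has length $\ge (\alpha-O(\epsilon))d_x$. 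Collecting the $O(\epsilon)$ terms and using strictness $\alpha<\beta_n$ (to stay below $g_n$) and $\alpha>\alpha_M$ gives the claim for $\epsilon$ below a threshold depending only on $\alpha$ (and $n$, through the trigonometric constants), but \emph{not} on $x$ — uniformity is automatic because the whole argument only referenced $d_x=\dist(x,M)<r_M/(1+\epsilon)$ and the angle, never the location of $x$.

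\textbf{Main obstacle.} The genuine difficulty is the quantitative annulus-to-sphere comparison: turning the clean limiting statement ``shortest path outside a ball between two boundary points of angular gap $\alpha$ has length $\alpha\cdot(\text{radius})$'' into an inequality valid for endpoints in a thin annulus and with the right dependence of $\epsilon_0$ only on $\alpha$. One must be careful that the constant does not secretly depend on $M$ or on how close $\alpha$ is to $\beta_n$ in a way that degenerates; keeping $g(\alpha)<g(\beta_n)=g_n$ strictly bounded away from the upper end is what makes the $O(\epsilon)$ slack absorbable. Everything else — connectivity of $M$ to extract the connecting curve, the chord bound $|z_1-z_2|\le 2r_M$ to legitimately invoke $\delta(M,r_M)$, and the contradiction — is routine.
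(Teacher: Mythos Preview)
Your approach is essentially the paper's: argue by contradiction, take $z_1,z_2\in M\cap B_{(1+\epsilon)d_x}(x)$ with angle $\chi\ge\alpha$ at $x$, observe any curve in $M$ from $z_1$ to $z_2$ avoids $B_{d_x}(x)$, project radially onto $\partial B_{d_x}(x)$ to get the length lower bound $d_M(z_1,z_2)\ge\chi d_x$, bound the chord $|z_1-z_2|\le 2d_x\sin(\chi/2)+2\epsilon d_x<2r_M$, and derive $\delta(M,r_M)\ge\frac{\chi/2}{\sin(\chi/2)+\epsilon}>g(\alpha_M)=\delta(M,r_M)$ for $\epsilon$ small.

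Two clarifications. First, your ``main obstacle'' is not one: the radial projection $z\mapsto x+d_x\frac{z-x}{|z-x|}$ is $1$-Lipschitz on \emph{all} of $\R^n\setminus B_{d_x}(x)$ (not $1+O(\epsilon)$ on the annulus), so the length bound $d_M(z_1,z_2)\ge\chi d_x$ is exact with no $O(\epsilon)$ correction; the paper invokes this directly. Second, $\beta_n$ and the Dekster--Jung constant $g_n$ play \emph{no role} in this lemma's proof---the argument works for any $\alpha\in(\alpha_M,\pi)$, and the $\epsilon_0$ depends only on the gap $g(\alpha)-g(\alpha_M)>0$, never on $\beta_n-\alpha$. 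The upper restriction $\alpha<\beta_n$ is used only downstream (Corollary~\ref{cor:common-direction}) where Jung's theorem on the sphere requires the diameter bound to be below $\beta_n$ to fit $\Sigma_\epsilon(x)$ in a ball of radius $<\pi/2$. Your worry that ``keeping $g(\alpha)<g_n$ strictly bounded away from the upper end is what makes the $O(\epsilon)$ slack absorbable'' is therefore misplaced.
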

Before proving this notice that the spherical version of Jung's theorem 
\cite[Theorem~2]{dekster_1995}
implies 
that $\Sigma_\epsilon(x)$ is contained in a geodesic ball
of radius
\begin{equation}\label{eq:geodesic-radius}
R(\alpha):=\arcsin\Big(\sqrt{\frac{2n-2}n}\sin\frac{\alpha}2\Big)<\frac{\pi}2.
\end{equation}
\begin{corollary}[Common direction away from $M$]
\label{cor:common-direction}
Under the assumptions of Lemma \ref{lem:set-directions} one finds for each
$\alpha\in (\alpha_M,\beta_n)$ and
$\epsilon\in (0,\epsilon_0(\alpha))$ and for
every $x\in B_{\frac{r_M}{1+\epsilon}}(M)\setminus M$
some unit vector $e_x\in\S^{n-1}$ such that $\Sigma_\epsilon(x)$ is contained
in the geodesic ball on $\S^{n-1}$ of radius $R(\alpha)$  centered at $e_x$. 
In particular,
\begin{equation}\label{eq:common-direction}
\Big\langle \frac{x-z}{|x-z|},e_x\Big\rangle_{\R^n}\ge \cos R(\alpha)>0
\quad\Foa z\in B_{(1+\epsilon)d_x}(x)\cap M.
\end{equation}
\end{corollary}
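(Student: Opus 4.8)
The plan is to obtain Corollary~\ref{cor:common-direction} as an essentially immediate consequence of Lemma~\ref{lem:set-directions} combined with the spherical Jung theorem of Dekster~\cite[Theorem~2]{dekster_1995}, exactly along the lines already indicated in the remark preceding the statement. Thus the corollary carries almost no content beyond a careful translation of ``contained in a geodesic ball'' into an inner-product inequality, together with the observation that the relevant ball radius stays strictly below $\pi/2$.

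First I would fix $\alpha\in(\alpha_M,\beta_n)$, $\epsilon\in(0,\epsilon_0(\alpha))$, and an arbitrary point $x\in B_{r_M/(1+\epsilon)}(M)\setminus M$. Lemma~\ref{lem:set-directions} gives $\diam_{\S^{n-1}}\Sigma_\epsilon(x)<\alpha$ for the nonempty set $\Sigma_\epsilon(x)\subset\S^{n-1}$ from~\eqref{eq:set-directions}. The spherical Jung theorem then provides a point $e_x\in\S^{n-1}$ such that $\Sigma_\epsilon(x)$ is contained in the closed geodesic ball on $\S^{n-1}$ centered at $e_x$ of radius $R(\alpha)=\arcsin\bigl(\sqrt{(2n-2)/n}\,\sin(\alpha/2)\bigr)$, which is precisely the circumradius bound recorded in~\eqref{eq:geodesic-radius}. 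Next I would check $R(\alpha)<\pi/2$: since $0<\alpha<\beta_n<\pi$ we have $\alpha/2\in(0,\pi/2)$, so monotonicity of $\sin$ on $[0,\pi/2]$ together with~\eqref{eq:beta_n} yields $\sin(\alpha/2)<\sin(\beta_n/2)=\sqrt{n/(2n-2)}$, whence $\sqrt{(2n-2)/n}\,\sin(\alpha/2)<1$ and, since $\arcsin$ is increasing on $[0,1]$, $R(\alpha)<\arcsin 1=\pi/2$. This strict inequality is the only place where it matters that $\alpha$ is taken strictly below the threshold $\beta_n$.

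Finally, for any $z\in B_{(1+\epsilon)d_x}(x)\cap M$ the unit vector $u:=(x-z)/|x-z|$ lies in $\Sigma_\epsilon(x)$ by definition~\eqref{eq:set-directions}, so its geodesic distance to $e_x$ is at most $R(\alpha)$. Because the geodesic distance between two points of $\S^{n-1}\subset\R^n$ equals the Euclidean angle $\angle(u,e_x)$ they span, this reads $\angle(u,e_x)\le R(\alpha)<\pi/2$, and applying the cosine, which is decreasing on $[0,\pi]$, gives $\langle u,e_x\rangle_{\R^n}=\cos\angle(u,e_x)\ge\cos R(\alpha)>0$, i.e.~\eqref{eq:common-direction}. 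The only obstacle worth naming is that there is essentially none: all the geometric substance sits inside Lemma~\ref{lem:set-directions}, and the mild care required here is purely in keeping $R(\alpha)$ strictly inside the hemisphere, so that $e_x$ genuinely ``points away from $M$'' uniformly in $z$. In particular no compactness or concentration arguments enter at this stage.
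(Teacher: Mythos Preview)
Your argument is correct and follows exactly the approach the paper indicates: the corollary is an immediate application of Lemma~\ref{lem:set-directions} together with Dekster's spherical Jung theorem, as spelled out in the remark preceding the statement. Your added verification that $R(\alpha)<\pi/2$ and the translation of the geodesic containment into the inner-product inequality~\eqref{eq:common-direction} are the only details the paper leaves implicit, and you have handled them cleanly.
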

{\it Proof of Lemma \ref{lem:set-directions}.}\,
Assume that the assertion is wrong.
Then there is some $\alpha\in(\alpha_M,\beta_n)$ such that
for any $\epsilon_0\in(0,1)$ we may find some
$\epsilon\in(0,\varepsilon_0)$
and some $x\in B_{\frac{r_M}{1+\epsilon}}(M)\setminus M$
such that $\diam_{\mathbb S^n}\Sigma_\epsilon(x)\ge\alpha$.
In particular, 
$d_x=\dist(x,M)>0$.
Thus, we may find $z_1,z_2\in B_{(1+\epsilon)d_x}(x)\cap M$ 
with
\begin{equation}\label{eq:large-angle}
\textstyle\pi\ge\chi:=\ANG\big(\frac{x-z_1}{|x-z_1|},
\frac{x-z_2}{|x-z_2|}\big)\ge\alpha.
\end{equation}
Since $B_{d_x}(x)\cap M=\emptyset$, the images of all curves
 $c:[0,1]\to M$ with $c(0)=z_1$ and $c(1)=z_2$ are  contained in 
 $\R^n\setminus B_{d_x}(x)$ so that their lengths $\HL(c)$ satisfy by 
 our assumption \eqref{eq:large-angle}
\begin{equation}\label{eq:lower-length-bound} 
 \HL(c)\ge \HH^1(c([0,1])\ge \HH^1(P_{d_x}\circ c([0,1])\ge\dist_{\partial B_{d_x}(x)}
 \big(P_{d_x}(c(0)),P_{d_x}(c(1))\big)
 \overset{\eqref{eq:large-angle}}{\ge}\chi d_x,
\end{equation} 
 where $P_{d_x}(z):=x+d_x(z-x)/|z-x|$ for $z\in\R^n\setminus B_{d_x}(x)$ denotes
 the next-point projection onto the sphere $\partial B_{d_x}(x)$. Notice that this
 projection is Lipschitz continuous with Lipschitz constant $1$
 on $\mathbb R^n\setminus B_{d_x}(x)$, thus decreasing
 the Hausdorff-measure, which 
 leads to the second inequality in \eqref{eq:lower-length-bound}
 due to \cite[Theorem~2.8]{evans-gariepy_2015}.
Now \eqref{eq:lower-length-bound}  implies $d_M(z_1,z_2)\ge\chi d_x$, whereas
 \begin{align*}
 |z_2-z_1|&\textstyle
 \le d_x\big|\frac{x-z_1}{|x-z_1|}-
 \frac{x-z_2}{|x-z_2|}\big|+|x-z_1|\big|1-\frac{d_x}{|x-z_1|}\big|+
 |x-z_2|\big|1-\frac{d_x}{|x-z_2|}\big|\\
 &\textstyle
 \le 2d_x\sin\frac{\chi}2+2\epsilon d_x\le 2(1+\epsilon)d_x<2r_M,
 \end{align*}
 which for $0<\epsilon<\epsilon_0$ for sufficiently small $\epsilon_0=
 \epsilon_0(\alpha)$ leads to the contradiction
 $$
\textstyle 
\delta(M,r_M)\ge
 \frac{d_M(z_1,z_2)}{|z_2-z_1|}\ge\frac{\chi/2}{\sin\frac{\chi}2 +\epsilon}
 \ge\frac{\alpha/2}{\sin\frac{\alpha}2 +\epsilon}>
 \frac{\alpha_M/2}{\sin\frac{\alpha_M}2}=g(\alpha_M)=\delta(M,r_M),
 $$
 where we also used that the function $s\mapsto\frac{s/2}{\sin\frac{s}2 +\epsilon}$
 is non-decreasing on $[0,\pi]$ for any $\epsilon>0$.
 \qed

 In Corollary \ref{cor:common-direction} we established for every $x\in
 B_{\frac{r_M}{1+\epsilon}}(M)\setminus M$ a common direction $e_x\in\S^{n-1}$
 pointing away from $M$. This unit vector keeps this property under small
 perturbations of the base point $x$.
 \begin{corollary}[Locally uniform common direction]
 \label{cor:loc-unif-common-direction}
Let $M\subset\R^n$ be a non-empty, closed and path-connected set with
$g(\alpha_M)=\delta(M,r_M)<g_n$ at some scale $r_M>0$ with distortion angle
$\alpha_M\in (0,\beta_n)$ and $\beta_n$ as in \eqref{eq:beta_n}.
 Then for each $\alpha\in (\alpha_M,\beta_n)$ there is $\epsilon_0=\epsilon_0(\alpha)>0$
such that the following holds: For all $\epsilon\in (0,\epsilon_0)$ and
$x\in B_{\frac{r_M}{1+\epsilon}}(M)\setminus M$ there exists $e_x\in\S^{n-1}$ and
$\sigma_0=\sigma_0(\alpha,\epsilon)\in (0,1)$ 
such that for all $\sigma\in (0,\sigma_0]$
one has the inclusion
\begin{equation}\label{eq:ball-inclusion}
B_{\sigma d_x}(x)\subset B_{r_M}(M)\setminus M,
\end{equation} and 
\begin{equation}\label{eq:unif-common-direction}
\Big\langle \frac{y-z}{|y-z|},e_x\Big\rangle_{\R^n}\ge \frac12 \cos R(\alpha)>0
\quad\Foa y\in B_{\sigma d_x}(x),
\,z\in B_{(1+\sigma)d_y}(y)\cap M,
\end{equation}
where $R(\alpha)$ is given in \eqref{eq:geodesic-radius}.
 \end{corollary}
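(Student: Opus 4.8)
The plan is to fix $\alpha\in(\alpha_M,\beta_n)$ and choose a slightly smaller angle $\alpha'\in(\alpha_M,\alpha)$, apply Lemma~\ref{lem:set-directions} and Corollary~\ref{cor:common-direction} with $\alpha'$ in place of $\alpha$ to obtain $\epsilon_0'=\epsilon_0(\alpha')$, and then shrink this to the $\epsilon_0$ claimed in the statement. For a given $\epsilon\in(0,\epsilon_0)$ and $x\in B_{r_M/(1+\epsilon)}(M)\setminus M$, Corollary~\ref{cor:common-direction} already supplies a unit vector $e_x$ with $\langle \tfrac{x-z}{|x-z|},e_x\rangle\ge\cos R(\alpha')>0$ for all $z\in B_{(1+\epsilon)d_x}(x)\cap M$, where $d_x=\dist(x,M)$. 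What remains is to propagate this inequality from the base point $x$ to all nearby points $y\in B_{\sigma d_x}(x)$, with a loss of at most a factor $\tfrac12$, provided $\sigma$ is small enough; and to verify the ball inclusion \eqref{eq:ball-inclusion}.

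The inclusion \eqref{eq:ball-inclusion} is the easy part: if $y\in B_{\sigma d_x}(x)$ then $\dist(y,M)\le d_x+\sigma d_x=(1+\sigma)d_x$, so $\dist(y,M)< r_M$ as soon as $(1+\sigma)d_x< r_M$; since $d_x< r_M/(1+\epsilon)$, any $\sigma\le\epsilon$ works, and also $\dist(y,M)\ge d_x-\sigma d_x>0$ so $y\notin M$. For the estimate \eqref{eq:unif-common-direction} the key observation is that for $y\in B_{\sigma d_x}(x)$ one has $d_y=\dist(y,M)\in[(1-\sigma)d_x,(1+\sigma)d_x]$, hence any $z\in B_{(1+\sigma)d_y}(y)\cap M$ satisfies $|x-z|\le |x-y|+|y-z|\le \sigma d_x+(1+\sigma)^2 d_x$. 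Choosing $\sigma_0=\sigma_0(\alpha',\epsilon)$ small enough that $\sigma_0+(1+\sigma_0)^2\le 1+\epsilon$ guarantees $z\in B_{(1+\epsilon)d_x}(x)\cap M$, so the direction $\tfrac{x-z}{|x-z|}$ lies in $\Sigma_\epsilon(x)$ and Corollary~\ref{cor:common-direction} applies to it. It then suffices to control how much the unit vector $\tfrac{y-z}{|y-z|}$ can differ from $\tfrac{x-z}{|x-z|}$: since $|y-z|\ge d_y\ge (1-\sigma)d_x$ while $|x-y|\le\sigma d_x$, an elementary estimate gives $\bigl|\tfrac{y-z}{|y-z|}-\tfrac{x-z}{|x-z|}\bigr|\le \tfrac{2|x-y|}{|y-z|}\le\tfrac{2\sigma}{1-\sigma}$, which tends to $0$ as $\sigma\to 0$. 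Consequently
\[
\Big\langle\frac{y-z}{|y-z|},e_x\Big\rangle
\ge \Big\langle\frac{x-z}{|x-z|},e_x\Big\rangle-\Big|\frac{y-z}{|y-z|}-\frac{x-z}{|x-z|}\Big|
\ge \cos R(\alpha')-\frac{2\sigma}{1-\sigma},
\]
and since $R(\alpha')<R(\alpha)$ by monotonicity of $R$ in \eqref{eq:geodesic-radius}, we have $\cos R(\alpha')>\cos R(\alpha)$; choosing $\sigma_0$ additionally so small that $\tfrac{2\sigma_0}{1-\sigma_0}\le \cos R(\alpha')-\tfrac12\cos R(\alpha)$ yields $\langle\tfrac{y-z}{|y-z|},e_x\rangle\ge\tfrac12\cos R(\alpha)>0$ for all $\sigma\in(0,\sigma_0]$, as desired.

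I do not expect a genuine obstacle here; the statement is a routine ``openness/stability under perturbation of the base point'' argument. The only point requiring slight care is the bookkeeping of the radii: one must ensure that when passing from $x$ to $y$, the enlarged ball $B_{(1+\sigma)d_y}(y)$ used to define the admissible points $z$ is still swallowed by the ball $B_{(1+\epsilon)d_x}(x)$ for which Corollary~\ref{cor:common-direction} was invoked — this is exactly why one works with an auxiliary angle $\alpha'<\alpha$ and reserves the ``slack'' between $\cos R(\alpha')$ and $\tfrac12\cos R(\alpha)$ to absorb the perturbation. Everything else is the triangle inequality.
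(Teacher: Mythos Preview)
Your proof is correct and follows essentially the same route as the paper's: show $B_{(1+\sigma)d_y}(y)\subset B_{(1+\epsilon)d_x}(x)$ so that Corollary~\ref{cor:common-direction} applies to $z$, then control the difference of the unit vectors $\tfrac{x-z}{|x-z|}$ and $\tfrac{y-z}{|y-z|}$ by an elementary triangle-inequality estimate. The only difference is that the paper dispenses with your auxiliary angle $\alpha'$: it applies Corollary~\ref{cor:common-direction} directly at the given $\alpha$, bounds the unit-vector difference by $2|x-y|/|x-z|<2\sigma$, and then simply takes $\sigma_0=\tfrac14\min\{\cos R(\alpha),\epsilon\}$ so that $\cos R(\alpha)-2\sigma\ge\tfrac12\cos R(\alpha)$; the slack between $\cos R(\alpha)$ and $\tfrac12\cos R(\alpha)$ already absorbs the perturbation, so your detour via $\alpha'$ is unnecessary.
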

 \begin{proof}
Take $\epsilon\in (0,\epsilon_0)$ for $\epsilon_0=\epsilon_0(\alpha)$ from Lemma 
\ref{lem:set-directions}. 
For all $0<\sigma\le\frac{\epsilon}4<\frac{\epsilon_0}4<\frac14$ and 
$y\in B_{\sigma d_x}(x)$ one has 
$$
\textstyle
0< (1-\sigma)d_x<d_y=\dist(y,M)<(1+\sigma)d_x<\frac{1+\sigma}{1+\epsilon}r_M<r_M,
$$ 
which implies \eqref{eq:ball-inclusion}.
Moreover, 
\begin{equation}\label{eq:balls-inclusion}
B_{(1+\sigma)d_y}(y)\subset B_{[(1+\sigma)^2+\sigma]d_x}(x)
\subset B_{(1+4\sigma)d_x}(x)\subset B_{(1+\epsilon)d_x}(x).
\end{equation}
Estimate for $y\in B_{\sigma d_x}(x)$ and $z\in B_{(1+\sigma)d_y}(y)\cap M$
\begin{align*}
\textstyle
\big|\frac{x-z}{|x-z|}-
\frac{y-z}{|y-z|}\big|&
\textstyle
=\big|\frac{(x-z)|y-z|-(y-z)|x-z|}{|x-z|\cdot |y-z|}\big|
\textstyle
\le \frac1{|x-z|}\big(|(x-z)-(y-z)|+\big| |y-z|-|x-z|\big|\big)\\
&\textstyle
\le 2\frac{|x-y|}{|x-z|}<2\frac{\sigma d_x}{d_x}=2\sigma,
\end{align*}
because $|x-z|\ge d_x$ for all $z\in M$. Thus, by \eqref{eq:balls-inclusion} and 
\eqref{eq:common-direction} in Corollary \ref{cor:common-direction},
\begin{eqnarray*}
\textstyle
\big\langle\frac{y-z}{|y-z|},e_x\big\rangle_{\R^n}& =  &
\textstyle
\big\langle\frac{x-z}{|x-z|},e_x\big\rangle_{\R^n}- 
\big\langle\frac{x-z}{|x-z|}-\frac{y-z}{|y-z|},e_x\big\rangle_{\R^n}\\
 &\overset{\eqref{eq:balls-inclusion},\eqref{eq:common-direction}}{>} & 
 \textstyle
 \cos 
R(\alpha)-2\sigma\ge\frac12\cos R(\alpha)>0
\end{eqnarray*}
if $\sigma\le\sigma_0(\alpha,\epsilon):=\frac14 \min\{\cos R(\alpha),\epsilon\}.$
 \end{proof}
 By means of a suitable covering and a partition of unity we can use the unit vectors
 $e_x\in\S^{n-1}$ of Corollary \ref{cor:loc-unif-common-direction} to construct a
 globally defined smooth vector field that near the set $M$ points away from $M$.
 \begin{proposition}[Vector field pointing away from $M$]
 \label{prop:vf}
For a non-empty,  closed and path-connected set $M\subset\R^n$ with
$\delta(M,r_M)<g_n$ at scale $r_M>0$ there exists a vectorfield
$V\in C^\infty(\R^n,\R^n)$ such that for all $y\in B_{r_M}(M)\setminus M$
\begin{equation}\label{eq:vf}
\Big\langle\frac{y-z}{|y-z|},V(y)\Big\rangle_{\R^n}\ge 1\quad
\Foa  z\in M\,\,\textnormal{with}\,\,
|z-y|=d_y=\dist(y,M).
\end{equation}
 \end{proposition}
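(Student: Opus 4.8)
The plan is to glue the ``away'' directions furnished by Corollary~\ref{cor:loc-unif-common-direction} together by means of a smooth partition of unity. Fix once and for all an angle $\alpha:=\tfrac12(\alpha_M+\beta_n)\in(\alpha_M,\beta_n)$, let $\epsilon_0=\epsilon_0(\alpha)>0$ be the associated threshold, $R(\alpha)<\tfrac\pi2$ as in \eqref{eq:geodesic-radius}, and abbreviate $c:=\tfrac12\cos R(\alpha)\in(0,\tfrac12)$. For every $x\in B_{r_M}(M)\setminus M$ we have $0<d_x=\dist(x,M)<r_M$, so the scale $\epsilon_x:=\tfrac12\min\{\epsilon_0,\tfrac{r_M}{d_x}-1\}$ lies in $(0,\epsilon_0)$ and satisfies $(1+\epsilon_x)d_x<r_M$, i.e.\ $x\in B_{r_M/(1+\epsilon_x)}(M)\setminus M$. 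Hence Corollary~\ref{cor:loc-unif-common-direction} applies at $x$ with $\epsilon=\epsilon_x$ and produces a unit vector $e_x\in\S^{n-1}$ together with $\sigma_x:=\sigma_0(\alpha,\epsilon_x)\in(0,1)$ such that $B_x:=B_{\sigma_x d_x}(x)\subset B_{r_M}(M)\setminus M$ (by \eqref{eq:ball-inclusion}) and estimate \eqref{eq:unif-common-direction} holds throughout $B_x$. The crucial feature -- and the reason we use Corollary~\ref{cor:loc-unif-common-direction} rather than merely Corollary~\ref{cor:common-direction} -- is that $e_x$ controls the nearest-point direction of \emph{every} $y\in B_x$: if $y\in B_x$ and $z\in M$ realizes $d_y=\dist(y,M)$, then $|z-y|=d_y$, hence $z\in\overline{B_{d_y}(y)}\subset B_{(1+\sigma_x)d_y}(y)$, and \eqref{eq:unif-common-direction} yields $\bigl\langle\tfrac{y-z}{|y-z|},e_x\bigr\rangle\ge c>0$.

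Now $U:=B_{r_M}(M)\setminus M$ is an open, hence paracompact, subset of $\R^n$, and $\{B_x\}_{x\in U}$ is an open cover of $U$. I would choose a locally finite subordinate smooth partition of unity $(\varphi_j)_{j\in J}$ with $\spt\varphi_j\subset B_{x_j}$ for suitable $x_j\in U$ and $\sum_{j\in J}\varphi_j\equiv1$ on $U$, and set $V_0:=\tfrac1c\sum_{j\in J}\varphi_j\,e_{x_j}$ on $U$. By local finiteness $V_0\in C^\infty(U,\R^n)$ with $|V_0|\le\tfrac1c$. Given $y\in U$ and $z\in M$ with $|z-y|=d_y$, every index $j$ with $\varphi_j(y)>0$ has $y\in B_{x_j}$ and therefore $\bigl\langle\tfrac{y-z}{|y-z|},e_{x_j}\bigr\rangle\ge c$ by the previous paragraph; since the condition ``$\langle\tfrac{y-z}{|y-z|},\cdot\,\rangle\ge c$'' cuts out a half-space and is thus stable under convex combinations,
\begin{equation*}
\Bigl\langle\frac{y-z}{|y-z|},V_0(y)\Bigr\rangle=\frac1c\sum_{j\in J}\varphi_j(y)\Bigl\langle\frac{y-z}{|y-z|},e_{x_j}\Bigr\rangle\ge\frac1c\sum_{j\in J}\varphi_j(y)\,c=1,
\end{equation*}
which is exactly \eqref{eq:vf}. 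A routine cutoff argument -- glue $V_0$ with the zero field outside $B_{r_M}(M)$, using that $V_0$ is bounded -- then extends it to the vector field $V\in C^\infty(\R^n,\R^n)$ asserted in the Proposition.

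Most of the analytic substance is already contained in Lemma~\ref{lem:set-directions} and in Corollaries~\ref{cor:common-direction} and~\ref{cor:loc-unif-common-direction}, so what remains is essentially bookkeeping; I do not expect a serious obstacle. The two slightly delicate points are (i) choosing, for each base point $x$, a scale $\epsilon_x$ that simultaneously respects $\epsilon_x<\epsilon_0(\alpha)$ and $(1+\epsilon_x)d_x<r_M$, so that Corollary~\ref{cor:loc-unif-common-direction} may be invoked at $x$, and (ii) passing from the cover $\{B_x\}$ to a locally finite refinement so that the defining sum for $V_0$ is pointwise finite and smooth. The one point worth stressing is the role of the normalizing constant $\tfrac1c$ in the definition of $V_0$: it is calibrated precisely so that the convex average of the unit vectors $e_{x_j}$, tested against the nearest-point direction $\tfrac{y-z}{|y-z|}$, lands above the threshold $1$ demanded by \eqref{eq:vf}, and not merely above some positive constant.
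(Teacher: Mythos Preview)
Your proposal is correct and essentially identical to the paper's proof: the same choice $\alpha=\tfrac12(\alpha_M+\beta_n)$, the same invocation of Corollary~\ref{cor:loc-unif-common-direction} at each $x$ to produce $e_x$ and the ball $B_{\sigma_x d_x}(x)$, the same partition-of-unity gluing, and the same normalizing constant $1/c=2/\cos R(\alpha)$ (the paper writes it as $w_x=2e_x/\cos R(\alpha)$). The only cosmetic difference is that the paper defines the sum directly on $\R^n$ via compactly supported $\xi_x\in C^\infty(\R^n)$ rather than invoking a separate cutoff step at the end.
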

 \begin{proof}
Let $\delta(M,r_M)=g(\alpha_M)<g_n$ and fix 
$\alpha:=\frac12(\alpha_M+\beta_n)\in (\alpha_M,\beta_n)$, where
$\beta_n$ is given in \eqref{eq:beta_n}. For this particular  $\alpha$ let
$\epsilon_0(\alpha)>0$ 
be the number of Corollary \ref{cor:loc-unif-common-direction}. For any $x\in U_0:=
B_{r_M}(M)\setminus M$ there exists $\varepsilon_x\in (0,\epsilon_0(\alpha))$ such
that $x\in U_{\varepsilon_x}:=B_{\frac{r_M}{1+\varepsilon_x}}(M)\setminus M$.
According to Corollary \ref{cor:loc-unif-common-direction} there exists $e_x\in\S^{n-1}$
and $\sigma_0=\sigma_0(\alpha,\varepsilon_x)>0$ such that \eqref{eq:unif-common-direction}
holds true for all $y\in B_{\sigma_0 d_x}(x)$ and $z\in B_{(1+\sigma_0)d_y}(y)\cap M$.

 For the covering $U_0=\bigcup_{x\in U_0}B_{\sigma_0(\alpha,\varepsilon_x)d_x}(x)$
 we choose\footnote{This is possible since each subset of $\R^n$ is a metric space
 with the inherited Euclidean metric, and therefore paracompact.}
 a locally finite refinement $\mathcal{W}:=\bigcup_{x\in Z}W(x)=U_0$,
 where for each $x\in Z\subset U_0$ the set 
 $W(x)\subset B_{\sigma_0(\alpha,\varepsilon_x)d_x}(x)$ is open and contains
 $x.$ Since $U_0\subset\R^n$ is Hausdorff there is a partition
 of unity subordinate to the locally finite open covering $\mathcal{W}$, i.e.,
 there are functions $(\xi_x)_{x\in Z}\subset C^\infty(\R^n)$ with $0\le\xi_x\le 1$,
 $\supp\xi_x\subset W(x)$, $\sum_{x\in Z}\xi_x(y)=1$ for all $y\in U_0$,
 and, finally, for each $y\in U_0$ there is an open neighborhood $Y(y)$
 of $y$ such
 that we have $\xi_x|_{Y(y)}\equiv 0$ for all but finitely many $x\in Z$. Setting
 $w_x:=\frac{2e_x}{\cos R(\alpha)}$, where $R(\alpha)$ is given by 
 \eqref{eq:geodesic-radius} for $\alpha=\frac12 (\alpha_M+\beta_n)$,
 we define 
 \begin{equation}\label{eq:def-vf}
\textstyle
V:\R^n\to\R^n,\quad y\mapsto \sum_{x\in Z}\xi_x(y)w_x,
 \end{equation}
 which is a smooth vector field since for each $y\in U_0$ the sum is a finite sum on 
 an open neighborhood $Y(y)$. 
 For $y\in\supp\xi_x\subset W(x)\subset
 B_{\sigma_0(\alpha,\varepsilon_x)d_x}(x)$ for some $x\in Z$, and for $z\in M$ with 
 $|z-y|=d_y$ one has by \eqref{eq:unif-common-direction} in Corollary 
 \ref{cor:loc-unif-common-direction}
 $$
 \textstyle
 \big\langle\frac{y-z}{|y-z|},w_x\big\rangle_{\R^n}=\frac2{\cos R(\alpha)}
 \big\langle\frac{y-z}{|y-z|},e_x\big\rangle_{\R^n}
 \overset{\eqref{eq:unif-common-direction}}{\ge}
1,
$$
because $|z-y|=d_y<(1+\sigma_0(\alpha,\varepsilon_x))d_y$ so that
$z\in B_{(1+\sigma_0(\alpha,\varepsilon_x))d_y}(y)\cap M$. We conclude with
$$
\textstyle
\big\langle\frac{y-z}{|y-z|},V(y)\big\rangle_{\R^n}=\sum_{x\in Z}\xi_x(y)
 \big\langle\frac{y-z}{|y-z|},w_x\big\rangle_{\R^n}\ge\sum_{x\in Z}\xi_x(y)=1
 $$
  for all $y\in U_0$.
 \end{proof}

\section{Pseudo-gradient flow for the distance function}\label{sec:pseudo}
The globally defined vector field $V$ constructed in Proposition \ref{prop:vf}
for the set $M\subset\R^n$ satisfying the local distortion bound induces
geometric flows on the complement $\R^n\setminus M$ increasing or decreasing the distance
to $M$. To construct a \emph{distance-increasing} flow $h_M^+:\R^n\setminus M\times [0,1]\to
\R^n\setminus M$ take a cut-off function $\varphi\in C^\infty(\R^n)$ with
\begin{equation}\label{eq:varphi}
\begin{cases}
\varphi\ge 0 & \ON\R^n,\\
\varphi\equiv 1 & \ON B_{\frac12{r_M}-\rho}(M),\\
\varphi\equiv 0 & \ON \R^n\setminus B_{\frac12{r_M}}(M)
\end{cases}
\end{equation}
for some $\rho\in (0,\frac12{r_M})$. Now, define 
\begin{equation}\label{eq:vtilde}
\tilde{V}:=\frac12 r_M\varphi\cdot V\in C^\infty(\R^n,\R^n),
\end{equation}
and consider the initial value problem
\begin{equation}\label{eq:awp}
\begin{cases}
\frac{d}{dt}h(x,t)=\tilde{V}(h(x,t))&\Fo x\in\R^n\setminus M,\,t\in\R,\\
\quad\! h(x,0)=x.
\end{cases}
\end{equation}
According to the Picard-Lindel\"of theorem \eqref{eq:awp} possesses a unique
smooth solution for all times $t\in\R$
with smooth dependence on the initial data.
Define $h_M^+\in C^\infty(\R^n\setminus M\times [0,1],\R^n)$ to be the 
restriction of that solution to the time interval $[0,1]$.
\begin{theorem}[Distance-increasing flow]\label{thm:dist-inc-flow}
Let $M\subset\R^n$ be a non-empty, closed and path-connected set with local distortion
$\delta(M,r_M)<g_n$ at scale $r_M>0$ for $g_n$ as defined in \eqref{eq:dekster-jung}.
Then for every $\rho\in (0,\frac12{r_M})$ there is a function $h_{M,\rho}^+\equiv
h_M^+
\in C^\infty(\R^n\setminus M\times [0,1],\R^n)$ with image $h_M^+(\R^n\setminus
M\times [0,1])\subset \R^n\setminus M$, satisfying
\begin{enumerate}
\item[\rm (1+)] $h_M^+(\cdot,0)=\Id_{\R^n\setminus M},$
\item[\rm (2+)] $h_M^+(\R^n\setminus M,1)\subset \R^n\setminus B_{\frac12{r_M}-\rho}(M)$,
\item[\rm (3+)] $h_M^+(\cdot,t)|_{\R^n\setminus B_{\frac12{r_M}}(M)}=\Id_{\R^n\setminus
B_{\frac12{r_M}}(M)}$ for all $t\in [0,1]$.
\end{enumerate}
Furthermore, the map $h_M^+(\cdot,t):\R^n\setminus M\to\R^n$ is an
embedding for every $t\in [0,1]$, and the function $t\mapsto\dist(h_M^+(x,t),M)$
is non-decreasing on $[0,1]$ for each $x\in\R^n\setminus M$.
\end{theorem}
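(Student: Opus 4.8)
The plan is to take $h_M^+$ to be the restriction to $(\R^n\setminus M)\times[0,1]$ of the global flow $\Phi\colon\R^n\times\R\to\R^n$ generated by the vector field $\tilde V=\tfrac12 r_M\varphi\cdot V$ from \eqref{eq:vtilde} (which depends on $\rho$ through the cut-off $\varphi$); since $|w_x|=2/\cos R(\alpha)$ and $0\le\varphi\le1$, the vector field $\tilde V$ is smooth and globally bounded, so the Picard--Lindel\"of solution is indeed complete and each $\Phi(\cdot,t)$ is a diffeomorphism of $\R^n$. Two of the claims are then immediate: (1+) is the initial condition $h(x,0)=x$ in \eqref{eq:awp}; (3+) holds because $\varphi\equiv0$, hence $\tilde V\equiv0$, on $\R^n\setminus B_{\frac12 r_M}(M)$, so these points are stationary. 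Likewise, once we know that $\Phi(\cdot,t)$ maps the open set $\R^n\setminus M$ into itself, its restriction $h_M^+(\cdot,t)\colon\R^n\setminus M\to\R^n$ is automatically a smooth embedding, being the restriction of a diffeomorphism of $\R^n$ to an open subset. So the real content is the invariance $\Phi(\R^n\setminus M,t)\subseteq\R^n\setminus M$ for $t\ge0$, the monotonicity of $t\mapsto\dist(\Phi(x,t),M)$, and property (2+); all three will come out of one differential inequality for the distance along trajectories.

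The central step is the following. Fix $x\notin M$ and, for as long as the trajectory avoids $M$, put $\phi(t):=\dist(\Phi(x,t),M)>0$. I claim that its lower right Dini derivative satisfies, for \emph{every} nearest point $z\in M$ with $|\Phi(x,t_0)-z|=\phi(t_0)$,
\begin{equation*}
\underline{D}^+\phi(t_0)\;\ge\;\Bigl\langle\tfrac{\Phi(x,t_0)-z}{|\Phi(x,t_0)-z|},\,\tilde V(\Phi(x,t_0))\Bigr\rangle_{\R^n}.
\end{equation*}
Indeed, for $t>t_0$ one has the one-sided comparison $\phi(t)-\phi(t_0)\ge|\Phi(x,t)-z|-|\Phi(x,t_0)-z|$; writing the right-hand side via $|a+v|-|a|=\int_0^1\bigl\langle\tfrac{a+sv}{|a+sv|},v\bigr\rangle\,ds$ with $a=\Phi(x,t_0)-z\neq0$ and $v=\Phi(x,t)-\Phi(x,t_0)$, dividing by $t-t_0>0$ and letting $t\searrow t_0$ (using that $\tfrac{d}{dt}\Phi(x,t_0)=\tilde V(\Phi(x,t_0))$ and that the integrand unit vectors converge uniformly in $s$) yields the claim. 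Combining this with Proposition \ref{prop:vf} and $\varphi\ge0$ gives
\begin{equation*}
\underline{D}^+\phi(t_0)\;\ge\;\tfrac12 r_M\,\varphi(\Phi(x,t_0))\cdot\Bigl\langle\tfrac{\Phi(x,t_0)-z}{|\Phi(x,t_0)-z|},V(\Phi(x,t_0))\Bigr\rangle_{\R^n}\;\ge\;\tfrac12 r_M\,\varphi(\Phi(x,t_0))\;\ge\;0
\end{equation*}
whenever $\Phi(x,t_0)\in B_{r_M}(M)\setminus M$; and when $\dist(\Phi(x,t_0),M)\ge r_M$ one trivially gets $\underline{D}^+\phi(t_0)\ge0$ since then $\varphi$ vanishes at $\Phi(x,t_0)$. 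Moreover $\underline{D}^+\phi(t_0)\ge\tfrac12 r_M$ whenever $\Phi(x,t_0)\in B_{\frac12 r_M-\rho}(M)$, where $\varphi\equiv1$.

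It remains to invoke the elementary fact that a continuous function on an interval whose lower right Dini derivative is everywhere $\ge0$ is non-decreasing there, and that, if it is $\ge c$ on $[a,b)$, then it increases by at least $c(b-a)$ across $[a,b]$ (proven by adding $\varepsilon t$ and running the standard $\sup$-argument). For the invariance, let $[0,T)$ be the maximal interval on which $\Phi(x,\cdot)$ stays in $\R^n\setminus M$; there $\phi$ is continuous with $\underline{D}^+\phi\ge0$, so $\phi(t)\ge\phi(0)=\dist(x,M)>0$, whence by continuity $\dist(\Phi(x,T),M)\ge\dist(x,M)>0$ if $T<\infty$, contradicting maximality; thus $T=\infty$, and $h_M^+$ maps into $\R^n\setminus M$. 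The monotonicity of $t\mapsto\dist(\Phi(x,t),M)$ on $[0,1]$ is then exactly the non-decreasing conclusion. Finally, for (2+): if some $x\notin M$ satisfied $\dist(\Phi(x,t),M)<\tfrac12 r_M-\rho$ for all $t\in[0,1]$, then $\varphi(\Phi(x,\cdot))\equiv1$ on $[0,1]$, hence $\underline{D}^+\phi\ge\tfrac12 r_M$ on $[0,1)$ and $\phi(1)\ge\phi(0)+\tfrac12 r_M>\tfrac12 r_M$, contradicting $\phi(1)<\tfrac12 r_M-\rho$; so $\dist(\Phi(x,t_*),M)\ge\tfrac12 r_M-\rho$ for some $t_*\in[0,1]$, and monotonicity gives $\dist(\Phi(x,1),M)\ge\tfrac12 r_M-\rho$.

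I expect the only genuine obstacle to be the first displayed inequality, i.e.\ handling the merely Lipschitz map $t\mapsto\dist(\Phi(x,t),M)$ along a trajectory that may well run through its non-differentiability set: the chain rule is unavailable, and the one-sided comparison with $|\Phi(x,\cdot)-z|$ at a nearest point $z$ is what replaces it. It is worth stressing that Proposition \ref{prop:vf} delivers the bound $\ge1$ for \emph{all} nearest points, which is precisely what makes the Dini-derivative estimate hold unconditionally, without having to track which nearest point is selected in the limit. The remaining ingredients---completeness and smoothness of the flow, the embedding property, and the Dini-derivative characterisation of monotonicity---are standard.
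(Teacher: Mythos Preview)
Your one-sided comparison is backwards. If $z$ is a nearest point to $\Phi(x,t_0)$, then $\phi(t_0)=|\Phi(x,t_0)-z|$, but for $t>t_0$ one only has $\phi(t)\le|\Phi(x,t)-z|$ (since $z\in M$), which gives $\phi(t)-\phi(t_0)\le|\Phi(x,t)-z|-|\Phi(x,t_0)-z|$, i.e.\ an \emph{upper} bound on the difference quotient. Consequently your displayed lower bound $\underline D^+\phi(t_0)\ge\bigl\langle\tfrac{\Phi(x,t_0)-z}{|\Phi(x,t_0)-z|},\tilde V(\Phi(x,t_0))\bigr\rangle$ for \emph{every} nearest point $z$ is false in general: take $M=\{-1,1\}\subset\R$, $\Phi(0,t)=t$, $t_0=0$; then $\underline D^+\phi(0)=-1$, yet for the nearest point $z=-1$ the right-hand side equals $+1$.

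The fix is to choose the nearest point at time $t$ rather than at $t_0$, and everything else you wrote goes through. For a sequence $t_k\searrow t_0$ realising the liminf, pick $z_k\in M$ with $|\Phi(x,t_k)-z_k|=\phi(t_k)$; then $\phi(t_0)\le|\Phi(x,t_0)-z_k|$ yields the correct sign $\phi(t_k)-\phi(t_0)\ge|\Phi(x,t_k)-z_k|-|\Phi(x,t_0)-z_k|$. The $z_k$ stay in a fixed compact part of the closed set $M$, so after passing to a subsequence $z_k\to z_*$ (necessarily a nearest point at $t_0$) your integral identity and limit give $\underline D^+\phi(t_0)\ge\bigl\langle\tfrac{\Phi(x,t_0)-z_*}{|\Phi(x,t_0)-z_*|},\tilde V(\Phi(x,t_0))\bigr\rangle$ for this particular $z_*$. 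Now Proposition~\ref{prop:vf}, which---as you correctly emphasise---applies to \emph{all} nearest points, delivers $\underline D^+\phi(t_0)\ge\tfrac12 r_M\varphi(\Phi(x,t_0))$, and your monotonicity, invariance, and (2+) arguments follow verbatim.

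With this correction your route is valid and more elementary than the paper's. The paper packages the same geometric content through Clarke's nonsmooth calculus: it identifies the Clarke gradient $\partial(\dist(\cdot,M))$ at $y$ as the convex hull of the unit vectors $(y-z)/|y-z|$ over all nearest points $z$, and then invokes Clarke's nonsmooth chain rule and vectorial mean-value theorem to obtain $D(t_2)-D(t_1)\ge\tfrac12 r_M\min_{[t_1,t_2]}\varphi(h_M^+(x,\cdot))\,(t_2-t_1)$. Your Dini-derivative argument avoids importing that framework at the price of the explicit compactness extraction of $z_*$; the Clarke approach hides this extraction inside the cited theorems.
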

\begin{proof}
We construct $h_M^+$ as above as the smooth and unique solution to \eqref{eq:awp}
restricted to $[0,1]$.
Uniqueness secures that $h_M^+(\cdot,t):\R^n\setminus M\to\R^n$ is an embedding for
each $t\in [0,1]$. The prescribed initial value in \eqref{eq:awp} guarantees condition
(1+). For $x\in\R^n\setminus B_{\frac12{r_M}}(M)$ the constant function $t\mapsto
h_M^+(x,t):=x$ is the unique solution to \eqref{eq:awp} since 
$\tilde{V}(h_M^+(x,t))=\tilde{V}(x)=\frac12{r_M}\varphi(x)V(x)=0$ because of
\eqref{eq:varphi}, which settles claim (3+). 

As soon as we have proven
that the function $t\mapsto
D(t):=\dist(h_M^+(x,t),M)$ is non-decreasing for each $x\in\R^n\setminus M$, we also know
that $h_M^+(\R^n\setminus M\times [0,1])\subset\R^n\setminus M$.  To prove this
monotonicity it suffices to consider $x\in B_{\frac12{r_M}}(M)\setminus M$ because of
of (3+). Notice that the image $H_x:=h_M^+(x,[0,1])\subset\R^n$ is compact, so that 
by means of (1+)
\begin{equation}\label{eq:Delta}
\Delta_x:=\max_{y\in H_x}\dist(y,M) > 0
\qquad\text{for any }x\in B_{\frac12r_M}(M)\setminus M.
\end{equation}
Therefore,
\begin{equation}\label{eq:Kx}\tag{$*$}
\textstyle\dist(y,M)=\inf_{z\in M}|y-z|=\min_{z\in K_x}|y-z|
=\dist(y,K_x)
\quad\Foa y\in H_x
\end{equation}
where we have set $K_x:=\overline{B_{2\Delta_x}(H_x)}\cap M$. Since $K_x$
is compact we may apply
\cite[Theorem~2.1(4)]{clarke_1975} and 
\cite[Exercise 9.13, p.~99]{clarke-etal_1998}
to infer that the Clarke gradient $\partial (\dist(y,M))$ of the $1$-Lipschitz
function $y\mapsto\dist(y,M)$ is given by
\begin{equation}\label{eq:clarke-gradient}
\textstyle
\partial (\dist(y,M))=\conv \big(\big\{\frac{y-z}{|y-z|}:z\in\Gamma_{K_x}(y)\big\}\big)
\quad\Foa y\in H_x\setminus M \stackrel{\eqref{eq:Delta}}=H_x,
\end{equation}
where $\conv(A)$ denotes the convex hull of a set $A\subset\R^n$, and
$\Gamma_A(y):=\{z\in A:\dist(y,A)=|y-z|\}$. (Notice that here,
$\Gamma_{K_x}(y)=\Gamma_M(y)
$ for all $y\in H_x\setminus M\stackrel{\eqref{eq:Delta}}=H_x$.\footnote{%
``$\subset$'' follows by $K_x\subset M$ and~\eqref{eq:Kx}.
``$\supset$'' follows by~\eqref{eq:Kx} and $\dist(y,M)\le\Delta_x$,
so any $z\in M$ with $\dist(y,M)=|y-z|$ satisfies $z\in \overline{B_{\Delta_x}(y)}
\subset\overline{B_{\Delta_x}(H_x)}$.})
Since $h_M^+$ is smooth, the function $D(t)=\dist(h_M^+(x,t),M)$ is
Lipschitz continuous on $[0,1]$, and we can apply the non-smooth
vectorial mean-value theorem \cite[Proposition 2.6.5]{clarke_1990} to deduce
for $0<t_1<t_2<1$ that
\begin{equation}\label{eq:mean-value}
D(t_2)-D(t_1)\in\conv\big(\partial D([t_1,t_2])(t_2-t_1)\big),
\end{equation}
which means that the difference $D(t_2)-D(t_1)$ is contained in the convex hull of the 
set $\{Z(t_2-t_1):Z\in\partial D(u)\,\,\textnormal{for some $u\in [t_1,t_2]$}\}$.
To obtain information about the Clarke gradient $\partial D(u)$ we apply
Clarke's non-smooth chain rule \cite[Theorem~2.3.9]{clarke_1990} which implies
for $u\in[t_1,t_2]$
\begin{equation}\label{eq:chain-rule}
\textstyle
\partial D(u)\subset\overline{\conv}\big(\big\{\langle\alpha,\zeta\rangle_{\R^n}:
\zeta=\frac{d}{dt}|_{t=u}h_M^+(x,t),\,\alpha\in\partial\big(\dist(
y,M)\big)
{},y=h_M^+(x,u)\big\}\big).
\end{equation}
With \eqref{eq:clarke-gradient} for $y:=h_M^+(x,u)\in H_x\setminus M$ and
$\alpha\in\partial(\dist(y,M))$ we can represent $\alpha$ as a convex combination
(where the number $J_u\in\N$ of summands depends on $u$)
$$
\textstyle
\alpha=\sum_{i=1}^{J_u}\lambda_i\frac{y-z_i}{|y-z_i|},\,\,\,
\lambda_i\in [0,1]\,\,\forall\,
i=1,\ldots,J_u,\,\,\,
\sum_{i=1}^{J_u} \lambda_i=1,
$$
where $z_i\in M$ satisfy $d_y=\dist(y,M)=|y-z_i|$ for all $i=1,\ldots,J_u.$
Consequently, by \eqref{eq:awp} and \eqref{eq:vf} in Proposition \ref{prop:vf}
\begin{align}\label{eq:alpha-times-zeta}
\langle\alpha,\zeta\rangle_{\R^n}&
\textstyle
=\sum_{i=1}^{J_u}\lambda_i\big\langle
\frac{y-z_i}{|y-z_i|},\frac{d}{dt}|_{t=u}h_M^+(x,t)\big\rangle_{\R^n}
\overset{\eqref{eq:awp}}{=}
\sum_{i=1}^{J_u}\lambda_i\big\langle\frac{y-z_i}{|y-z_i|},\tilde{V}(h_M^+(x,u))
\big\rangle_{\R^n}\notag\\
&\textstyle
=\frac12 r_M\varphi(y)\sum_{i=1}^{J_u}\lambda_i\big\langle\frac{y-z_i}{|y-z_i|},
{V}(y)\big\rangle_{\R^n}\overset{\eqref{eq:vf}}{\ge}\frac12 r_M\varphi(y)\ge 0,
\end{align}
if $y=h_M^+(x,u)\in B_{r_M}(M)\setminus M$, which is certainly the case
if $t_2\ll 1$ by continuity of the flow and our initial condition 
$h_M^+(x,0)=x\in B_{\frac12{r_M}}(M)\setminus M$. So, combining \eqref{eq:chain-rule}
with \eqref{eq:alpha-times-zeta} we obtain 
\begin{align}
\partial D(u) & 
 \subset \big[\tfrac12 r_M\varphi(h_M^+(x,u)),\infty\big),\label{eq:clarke-gradient-D}
\end{align}
which inserted into \eqref{eq:mean-value} yields
\begin{align*}
D(t_2)-D(t_1)&\in\conv\Big(\big\{Z\cdot (t_2-t_1):Z\in \big[\textstyle\frac12 r_M\varphi
(h_M^+(x,u)),\infty\big)\quad\textnormal{for some $u\in [t_1,t_2]$}\big\}\Big)\\
& \subset \big[\textstyle\frac12 r_M\min_{u\in [t_1,t_2]}\varphi(h_M^+(x,u))(t_2-t_1),
\infty
\big)\subset [0,\infty).
\end{align*}
Therefore, 
\begin{equation}\label{eq:D-diff}
D(t_2)-D(t_1)\ge \frac12 r_M\min_{u\in [t_1,t_2]}\varphi(h_M^+(x,u))(t_2-t_1)
\ge 0\qquad
\Foa 0<t_1<t_2<1,
\end{equation}
as long as $h_M^+(x,[0,t_2])\subset B_{r_M}(M)\setminus M$. 
This means that the distance $D(t)=\dist(h_M^+(x,t),M)$ is non-decreasing as long
as $h_M^+(x,t)\in B_{r_M}(M)\setminus M$, and as soon as
$D(t)=\frac12{r_M}$ it remains constant by means of the already established
property 
(3+) and uniqueness of the solution to \eqref{eq:awp}, 
so that $D(t_2)-D(t_1)\ge 0$ for all $0\le t_1<t_2\le 1$
by continuity of $D$.

In order to settle claim (2+) it suffices to assume that the starting point
$h_M^+(x,0)=x$ is contained in $B_{\frac12{r_M}}(M)\setminus M$ because of (3+).
We have to show that $D(1)=\dist(h_M^+(x,1),M)\ge\frac12{r_M}-\rho$.
If $D(t)\ge \frac12{r_M}-\rho$ for some $t\in [0,1)$ then $D(1)\ge D(t)\ge
\frac12{r_M}-\rho$ since $D(\cdot)$ was just shown to be non-decreasing
on $[0,1]$. If, on the other hand, $D(s)<\frac12{r_M}-\rho$ for all $s\in [0,1)$ then
by definition \eqref{eq:varphi} of $\varphi$ one has $\varphi(h_M^+(x,s))=1$
for all $s\in [0,1)$ (even for $s=1$ by continuity of $\varphi\circ h_M^+(x,\cdot)$),
and we can apply \eqref{eq:D-diff} to all $0<t_1<t_2<1$, for example
for $t_1:=\sigma$ and $t_2:=1-\sigma$ for any $\sigma\in (0,\frac12).$
Hence,
$$
\textstyle
D(1-\sigma)\overset{\eqref{eq:D-diff}}{\ge} D(\sigma)+\frac12 r_M(1-2\sigma),
$$
from which we deduce $D(1)\ge D(0)+\frac12 r_M>\frac12 r_M-\rho$ also in this
case, by sending $\sigma$ to zero.
\end{proof}
For future reference we draw the immediate conclusion, that having reached 
a certain
distance
from the set $M$ at time $t=1$ according to property (2+) of Theorem~
\ref{thm:dist-inc-flow}, means that we have also moved away from sets $A$
close to $M$
with respect to the Hausdorff distance $\dist_{\HH}$.
More precisely, we can state then following assertion.
\begin{corollary}[Complements of nearby sets under $h_M^+$]\label{cor:nearby-sets}
Let $M\subset\R^n$ be a non-empty, closed and path-connected set with
$\delta(M,r_M)<g_n$ at some scale $r_M>0$, and suppose a set $A\subset\R^n$ 
satisfies
\begin{equation}\label{eq:nearby-sets}
\dist_{\HH}(M,A)<\frac12{r_M} -\rho\quad\textnormal{for some $\rho\in (0,
\frac12{r_M}).$}
\end{equation}
Then the image of the function $h_M^+(\cdot,1)$ 
in Theorem~\ref{thm:dist-inc-flow} for this
$\rho$ satisfies the inclusion
\begin{equation}\label{eq:nearby-complements}
h_M^+(\R^n\setminus M,1)\subset\R^n\setminus A.
\end{equation}
\end{corollary}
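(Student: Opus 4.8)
The plan is to read off the claim directly from property (2+) of Theorem~\ref{thm:dist-inc-flow} together with the definition of the Hausdorff distance; no further analytic work is needed. Note first that the map $h_M^+$ in the statement is the one constructed in Theorem~\ref{thm:dist-inc-flow} for \emph{this very} $\rho$, i.e.\ the same parameter that appears in the hypothesis~\eqref{eq:nearby-sets}. Hence property (2+) applies and yields $h_M^+(\R^n\setminus M,1)\subset\R^n\setminus B_{\frac12 r_M-\rho}(M)$, which is to say that every image point $y\in h_M^+(\R^n\setminus M,1)$ satisfies
\begin{equation*}
\dist(y,M)\ge\tfrac12 r_M-\rho.
\end{equation*}

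Next I would argue by contradiction. Suppose some such $y$ were to lie in $A$. Since by~\eqref{eq:nearby-sets} we have $\dist_{\HH}(M,A)<\frac12 r_M-\rho$, in particular $\sup_{a\in A}\dist(a,M)\le\dist_{\HH}(M,A)<\frac12 r_M-\rho$, so that $\dist(y,M)<\frac12 r_M-\rho$. This contradicts the bound just obtained from (2+), and therefore $y\notin A$. As $y\in h_M^+(\R^n\setminus M,1)$ was arbitrary, this gives exactly the inclusion~\eqref{eq:nearby-complements}.

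There is essentially no obstacle; the only point worth stressing in writing is the matching of the two occurrences of $\rho$, which is precisely why the corollary is phrased with a single common $\rho$ shared by the flow $h_M^+=h_{M,\rho}^+$ and the Hausdorff-distance hypothesis. One may optionally add the interpretation that the time-$1$ flow provides a uniform ``escape margin'' of size $\frac12 r_M-\rho$ away from $M$, hence also away from every set $A$ lying within that Hausdorff distance of $M$.
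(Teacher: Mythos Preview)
Your proof is correct and follows essentially the same approach as the paper: both use property~(2+) to get $\dist(h_M^+(x,1),M)\ge\tfrac12 r_M-\rho$ and then the Hausdorff-distance hypothesis~\eqref{eq:nearby-sets} to conclude that $h_M^+(x,1)\notin A$. The only cosmetic difference is that the paper writes a direct chain $\dist(h_M^+(x,1),A)\ge\dist(h_M^+(x,1),M)-\dist_{\HH}(M,A)>0$, whereas you argue by contradiction.
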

\begin{proof}
Property (2+) in Theorem~\ref{thm:dist-inc-flow} implies by means of 
\eqref{eq:nearby-sets} for all $x\in\R^n\setminus M$
\[
\textstyle
\dist(h_M^+(x,1),A)\ge\dist(h_M^+(x,1),M)-\dist_{\HH}(M,A)\overset{\textnormal{(2+)}}{
\ge}\frac12{r_M} -\rho-\dist_{\HH}(M,A)\overset{\eqref{eq:nearby-sets}}{>} 0.\qedhere
\]
\end{proof}
In a similar fashion we can flow \emph{towards} $M$.
\begin{theorem}[Distance-decreasing flow]\label{thm:dist-dec-flow}
If $M\subset\R^n$ satisfies the assumptions of Theorem~\ref{thm:dist-inc-flow},
then for every $0<\rho <\delta < r_M$ there is a function $h_{M,\rho,\delta}^-
\equiv h_M^-\in C^\infty (\R^n\setminus M\times [0,1],\R^n)$ with 
$h_M^-(\R^n\setminus M\times [0,1])\subset\R^n\setminus M$ such that
\begin{enumerate}
\item[\rm (1-)] $h_M^-(\cdot,0)=\Id_{\R^n\setminus M},$
\item[\rm (2-)] $h_M^-(B_\delta(M)\setminus M,1)\subset B_{\rho}(M)\setminus M$,
\item[\rm (3-)] $h_M^-(\cdot,t)|_{(B_{\frac12{\rho}}(M)\setminus M)\cup(\R^n\setminus B_{r_M}(M))}=
\Id_{(B_{\frac12{\rho}}(M)\setminus M)\cup(\R^n\setminus B_{r_M}(M))}$
 for all $t\in [0,1]$.
\end{enumerate}
Moreover, the map $h_M^-(\cdot,t):\R^n\setminus M\to\R^n$ is an
embedding for every $t\in [0,1]$, and the function $t\mapsto\dist(h_M^-(x,t),M)$
is non-increasing on $[0,1]$ for each $x\in\R^n\setminus M$.
\end{theorem}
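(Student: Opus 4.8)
The plan is to argue in complete analogy with Theorem~\ref{thm:dist-inc-flow}, reusing the \emph{same} vector field $V\in C^\infty(\R^n,\R^n)$ from Proposition~\ref{prop:vf} (which is bounded, as is immediate from \eqref{eq:def-vf}), but now flowing along $-V$ with a cut-off adapted to the radii $0<\rho<\delta<r_M$. Concretely, I would fix $\delta''\in(\delta,r_M)$ and pick $\psi\in C^\infty(\R^n)$ with $0\le\psi\le 1$, $\psi\equiv 0$ on $B_{\rho/2}(M)\cup(\R^n\setminus B_{\delta''}(M))$, and $\psi\equiv 1$ on $\overline{B_\delta(M)}\setminus B_{3\rho/4}(M)$; such a $\psi$ exists by the smooth Urysohn lemma, since the closure of the latter set lies in $\{3\rho/4\le\dist(\cdot,M)\le\delta\}$, which is disjoint from the closed set $\{\dist(\cdot,M)\le\rho/2\}\cup\{\dist(\cdot,M)\ge\delta''\}$ because $\rho/2<3\rho/4$ and $\delta<\delta''$. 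Then I would set $\tilde V:=-r_M\,\psi\,V\in C^\infty(\R^n,\R^n)$; as $\tilde V$ is bounded it is complete, so its flow exists for all $t\in\R$ with smooth dependence on the initial data, and I let $h_M^-$ be its restriction to $(\R^n\setminus M)\times[0,1]$. Uniqueness of solutions makes each $h_M^-(\cdot,t)$ an embedding, the prescribed initial value yields (1-), and since $\tilde V\equiv 0$ on $(B_{\rho/2}(M)\setminus M)\cup(\R^n\setminus B_{r_M}(M))$ the constant curves there are the unique solutions, which gives (3-).

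Next I would show that $t\mapsto D(t):=\dist(h_M^-(x,t),M)$ is non-increasing, by repeating verbatim the Clarke-gradient argument from the proof of Theorem~\ref{thm:dist-inc-flow} (with $H_x:=h_M^-(x,[0,1])$ in place of $h_M^+(x,[0,1])$): for $y=h_M^-(x,u)\in B_{r_M}(M)\setminus M$ and any $\alpha\in\partial\dist(y,M)$, written as a convex combination $\sum_i\lambda_i\frac{y-z_i}{|y-z_i|}$ over nearest points $z_i\in M$, the key inequality \eqref{eq:vf} now enters with the opposite sign,
\[
\textstyle\langle\alpha,\tilde V(y)\rangle=-r_M\psi(y)\sum_i\lambda_i\big\langle\tfrac{y-z_i}{|y-z_i|},V(y)\big\rangle\le-r_M\psi(y)\le 0,
\]
so that $\partial D(u)\subset(-\infty,-r_M\psi(h_M^-(x,u))]$ and hence $D(t_2)-D(t_1)\le-r_M\min_{u\in[t_1,t_2]}\psi(h_M^-(x,u))\,(t_2-t_1)$. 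A continuation argument identical to the one in the proof of Theorem~\ref{thm:dist-inc-flow} shows that orbits starting in $B_{\delta''}(M)\setminus M$ remain in $B_{r_M}(M)\setminus M$ for all $t\ge 0$, so \eqref{eq:vf} stays applicable along them. The additional point, which replaces the ``distance non-decreasing, hence positive'' reasoning of Theorem~\ref{thm:dist-inc-flow}, is that $D$ can never drop below $\rho/2$: if $D(t^*)=\rho/2$ then $h_M^-(x,t^*)\in\overline{B_{\rho/2}(M)}\setminus M$, where $\tilde V$ vanishes, so $h_M^-(x,\cdot)$ is constant for $t\ge t^*$ by uniqueness; and starting points in $B_{\rho/2}(M)\setminus M$ are fixed outright. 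In all cases $D(t)>0$, giving $h_M^-((\R^n\setminus M)\times[0,1])\subset\R^n\setminus M$ together with the asserted monotonicity of $t\mapsto\dist(h_M^-(x,t),M)$.

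Finally, for (2-) I would use the speed built into $\tilde V$: for $x\in B_\delta(M)\setminus M$ we have $D(0)<\delta$, and as long as $D(t)\ge 3\rho/4$ the point $h_M^-(x,t)$ sits at distance in $[3\rho/4,\delta)$ from $M$, hence in $\overline{B_\delta(M)}\setminus B_{3\rho/4}(M)$ where $\psi\equiv 1$, so $D$ decreases at rate at least $r_M$; since $\delta<r_M$, within time $(\delta-3\rho/4)/r_M<1$ the distance reaches $\le 3\rho/4$, whence $0<D(1)\le 3\rho/4<\rho$ by monotonicity and the previous paragraph, i.e.\ $h_M^-(x,1)\in B_\rho(M)\setminus M$. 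I do not expect any single deep step here: the argument is a mirror image of Theorem~\ref{thm:dist-inc-flow}, and the main point is simply to coordinate the cut-off $\psi$ (which, unlike $\varphi$ in Theorem~\ref{thm:dist-inc-flow}, must vanish near $M$) with the speed factor $r_M$ so as to simultaneously (i) keep the flow inside $B_{r_M}(M)\setminus M$, where \eqref{eq:vf} holds, (ii) prevent it from ever reaching $M$, and (iii) ensure that unit time suffices to push $B_\delta(M)$ into $B_\rho(M)$.
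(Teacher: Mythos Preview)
Your proposal is correct and follows essentially the same approach as the paper: both flow along $-r_M\psi V$ for a smooth cut-off $\psi$ vanishing near $M$ and far from $M$, and both reuse the Clarke-gradient argument of Theorem~\ref{thm:dist-inc-flow} with the opposite sign to obtain the decay estimate for $D$. The only differences are cosmetic---the paper takes $\psi\equiv 0$ outside $B_{r_M}(M)$ and $\psi\equiv 1$ on $B_\delta(M)\setminus B_\rho(M)$ (rather than your $\delta''$ and $3\rho/4$), and proves (2-) by contradiction rather than your direct time-of-arrival computation---but the substance is identical.
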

\begin{proof}
Choose a cut-off function $\psi\in C^\infty(\R^n)$ with $\psi\ge 0$ on
$\R^n$, $\psi\equiv 0$ on $B_{\frac12{\rho}}(M)\cup(\R^n\setminus B_{r_M}(M))$,
and $\psi\equiv 1$ on $B_\delta(M)\setminus B_\rho(M)$. We take the unique
smooth solution $h\in C^\infty((\R^n\setminus M)\times \R,\R^n)$ of 
\eqref{eq:awp} with the vector field $\tilde{V}$ replaced by $\tilde{W}:=-r_M\psi\cdot
V$, where $V\in C^\infty(\R^n,\R^n)$ is the vector field constructed in
Proposition \ref{prop:vf}. Define $h_M^-\in C^\infty(\R^n\setminus M\times [0,1],
\R^n)$ as the restriction of $h$ to the interval $[0,1]$. Again (1-) immediately
follows from the initial condition in \eqref{eq:awp}, and uniqueness of the solution
implies that $h_M^-(\cdot,t):\R^n\setminus M\to\R^n$ is an embedding for each 
$t\in [0,1]$. Property (3-) follows from the fact that $\psi$ vanishes on
$B_{\frac12{\rho}}(M)\cup(\R^n\setminus B_{r_M}(M))$ since the constant function 
$t\mapsto h_M^-(x,t):=x$ is the unique solution to \eqref{eq:awp} on regions
where the right-hand side of \eqref{eq:awp} vanishes. This already implies that
$h_M^-(\R^n\setminus M\times [0,1])\subset\R^n\setminus M.$

To show that $D(t):=\dist(h_M^-(x,t),M)$ is non-increasing on $[0,1]$ it suffices
to consider $x\in B_{r_M}(M)\setminus B_{\frac12{\rho}}(M)$ since for all other
$x\in\R^n\setminus M$ the distance $D(t)$ is constant on $[0,1]$ by virtue
of the already established property (3-). The characterization 
\eqref{eq:clarke-gradient}
of the Clarke gradient 
$\partial (\dist(y,M))$ for $y\in h_M^-(x,[0,1])\setminus M$
continues to hold true, as well as the vectorial mean-value inclusion 
\eqref{eq:mean-value} for $0<t_1<t_2<1$, and the inclusion \eqref{eq:chain-rule}
for $\partial D(u)$, $u\in [t_1,t_2]$. For $y:=h_M^-(x,u)\in h_M^-(x,[0,1])\setminus
M$ we use \eqref{eq:clarke-gradient} and the differential equation
\eqref{eq:awp} with $\tilde{W}$ instead of $\tilde{V}$ to conclude by means of
\eqref{eq:vf} for
$\alpha\in\partial\dist(y,M))$ and $\zeta=\frac{d}{dt}|_{t=u}h_M^-(x,t),$
\begin{align*}
\langle\alpha,\zeta\rangle_{\R^n}&
\textstyle
=\sum_{i=1}^{J_u}\lambda_i\big\langle
\frac{y-z_i}{|y-z_i|},\tilde{W}(y)\big\rangle_{\R^n}
 = -r_M\psi(y)\sum_{i=1}^{J_u}\lambda_i\big\langle
\frac{y-z_i}{|y-z_i|},V(y)\big\rangle_{\R^n}\le -r_M\psi(y)
\end{align*}
where $\lambda_i\in [0,1]$, $\sum_{i=1}^{J_u}
\lambda_i=1$, $z_i\in M$ such that $d_y=\dist(y,M)=|y-z_i|$
for all $i=1,\ldots,J_u$,
if $y=h_M^-(x,u)\in B_{r_M}(M)\setminus M$, which holds true for sufficiently
small $t_2\in (0,1)$ by continuity. So, we obtain for such $t_2$ and $u\in [t_1,t_2]$
\begin{align*}
\partial D(u)
& \subset \big(-\infty,-r_M\psi(h_M^-(x,u))\big].
\end{align*}
Inserting  this into \eqref{eq:mean-value} yields
\begin{equation}\label{eq:Dmonotone}
D(t_2)\le D(t_1)-r_M\min_{u\in [t_1,t_2]}\psi(h_M^-(x,u))(t_2-t_1)\le D(t_1)
\end{equation}
for all $0<t_1<t_2<1$ 
subject to $h_M^-(x,[0,t_2])\subset B_{r_M}(M)\setminus M$,
which proves that $D(\cdot)$ is non-increasing as long as the flow remains
in $B_{r_M}(M)\setminus M$. But as soon as 
$D(\cdot)$ reaches the value $\frac{\rho}2$
it
remains constant due to (3-) and uniqueness of the flow.

To finally establish (2-) we may
assume that 
\begin{equation}\label{eq:contraD}
\delta>D(0)\ge D(s)\ge\rho\quad\Foa
s\in [0,1),
\end{equation}
since if $D(s)<\rho$ for
some $s\in [0,1)$, then by the monotonicity we just proved we find $D(1)<\rho$ as well,
which settles (2-).
So, with \eqref{eq:contraD}
we are in the situation that we can apply \eqref{eq:Dmonotone}
for all $0<t_1<t_2<1$ and that $\psi(h_M^-(x,s))= 1$ for all $s\in [0,1)$.
Consequently, by \eqref{eq:Dmonotone} for $t_1:=\sigma,$ $t_2:=1-\sigma$, $\sigma\in (0,\frac12)$ we find $D(1-\sigma)\le D(\sigma)-r_M(1-2\sigma).$ Upon $\sigma\to 0$ we obtain
by continuity and \eqref{eq:contraD} the inequality
$
0\le D(1)\le D(0)-r_M <\delta-r_M<0$, a contradiction, so that the situation 
\eqref{eq:contraD}  can never occur.
\end{proof}

\section{Isomorphisms of fundamental groups}\label{sec:iso}
This section is devoted to the proof of Theorem~\ref{thm:isomorphic}. 
To start with, notice that  the mappings $r\mapsto\delta(M_i,r)$,
$i=1,2$, are non-decreasing, where $M_i$ are
the two non-empty,
closed and path-connected sets with path-connected complement 
$\R^n\setminus M_i$,
such that
\begin{equation}\label{eq:distortion-assumptions}
\delta(M_i,r_{M_i})< g_n\quad\Fo i=1,2, \quad\AND 
\dist_{\HH}(M_1,M_2)<\frac14
\min\{r_{M_1},r_{M_2}\},
\end{equation}
as requested in Theorem~\ref{thm:isomorphic}. 
So, we set
\begin{equation}\label{eq:r*}
r^*:=\min\{r_{M_1},r_{M_2}\},
\end{equation}
and apply Theorem~\ref{thm:dist-inc-flow} to the set $M:=M_1$
but with respect to the (possibly smaller) scale $r^*$ instead
of $r_{M_1}$, and with 
the parameter $\rho$ required in Theorem~\ref{thm:dist-inc-flow} 
chosen to satisfy
\begin{equation}\label{eq:rho-choice}
0<\rho<\frac14\cdot\Big(\,\frac14\min\{r_{M_1},r_{M_2}\}-\dist_{\HH}(M_1,M_2)\Big)=\frac14\cdot\Big(\frac14 r^*-\dist_{\HH}(M_1,M_2)\Big),
\end{equation}
to obtain the smooth function $h_1:=h_{M_1}^+:\R^n\setminus M_1\times [0,1]\to\R^n
\setminus  M_1$ with all the properties listed in that theorem.
Define the 
group homomorphism
$J:\pi_1(\R^n\setminus M_1)\to 
\pi_1(\R^n\setminus M_2) $ by means of
\begin{equation}\label{eq:def-J}
J([c]_1):=[h_1(\cdot,1)\circ c]_2,
\end{equation}
for any closed curve $c$ in $\R^n\setminus M_1$ representing the equivalence\footnote{We
choose the obvious notation $[\cdot]_i$ 
to indicate equivalence classes of closed loops 
in
the respective fundamental  group $\pi_1(\R^n\setminus M_i)$ for $i=1,2$.}
class $[c]_1\in\pi_1(\R^n\setminus M_1)$. Corollary \ref{cor:nearby-sets}
for $M:=M_1$ and $A:=M_2$ implies that
\begin{equation}\label{eq:respect-complements}
h_1(\R^n\setminus M_1,1)\subset \R^n\setminus M_2,
\end{equation}
because our assumption \eqref{eq:rho-choice} on $\rho$ clearly implies
$\dist_{\HH}(M_1,M_2)<\frac14 r^*<\frac12 r^*-\rho, $
that is, hypothesis \eqref{eq:nearby-sets} of Corollary \ref{cor:nearby-sets}
is satisfied for $r_M:=r^*$.
Inclusion \eqref{eq:respect-complements} shows that the mapping
$J$ defined in
\eqref{eq:def-J}  actually maps into $\pi_1(\R^n\setminus M_2)$, but we need to
verify that the definition does \emph{not} depend on the particular choice
of the loop $c$ in $\R^n\setminus M_1$. Indeed, for any other representative
loop $\tilde{c}$ in $\R^n\setminus M_1$ with $[\tilde{c}]_1=[c]_1$ there is,
by definition, a homotopy $\psi\in C^0(
\R/\Z\times [0,1],\R^n\setminus M_1)$ such that
$\psi(\cdot,0)=c(\cdot)$ and $\psi(\cdot,1)=\tilde{c}(\cdot)$. Then by 
\eqref{eq:respect-complements} the map $h_1(\cdot,1)\circ\psi$ is a homotopy in $\R^n\setminus
M_2$ connecting $h_1(\cdot,1)\circ\psi(\cdot,0)=h_1(c(\cdot),1)$ and
$h_1(\cdot,1)\circ\psi(\cdot,1)=h_1(\tilde{c}(\cdot),1)$, so that
$J([c]_1)=J([\tilde{c}]_1).$ Hence $J$ is well-defined.
\begin{lemma}[$J$ injective]\label{lem:injective}
The linear mapping $J:\pi_1(\R^n\setminus M_1)\to\pi_1(\R^n\setminus M_2)$
as defined in \eqref{eq:def-J} above is injective.
\end{lemma}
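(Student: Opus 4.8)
The plan is to show that if $J([c]_1) = [\text{const}]_2$ then $[c]_1 = [\text{const}]_1$, by transporting a nullhomotopy from $\R^n \setminus M_2$ back to $\R^n \setminus M_1$. The central point is that the flow $h_1 = h_{M_1}^+$ built in Theorem~\ref{thm:dist-inc-flow} has a natural companion flow \emph{towards} $M_1$ provided by Theorem~\ref{thm:dist-dec-flow}, and these two should be nearly inverse to each other on the relevant region, at least up to homotopy in the complement. First I would take a loop $c$ in $\R^n \setminus M_1$ with $J([c]_1) = 0$, i.e.\ $h_1(\cdot,1)\circ c$ is nullhomotopic in $\R^n \setminus M_2$; say $\Phi \in C^0(\R/\Z \times [0,1], \R^n \setminus M_2)$ is a homotopy from $h_1(c(\cdot),1)$ to a constant. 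I then want to convert $\Phi$ into a nullhomotopy of $c$ itself inside $\R^n \setminus M_1$.

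The key steps, in order: (1)~Observe that $t \mapsto h_1(\cdot, t)$ is a homotopy in $\R^n \setminus M_1$ from $\Id$ to $h_1(\cdot,1)$, hence $[c]_1 = [h_1(c(\cdot),1)]_1$ \emph{as a loop in} $\R^n \setminus M_1$ — but note $h_1(c(\cdot),1)$ actually lies in $\R^n \setminus B_{\frac12 r^* - \rho}(M_1)$ by property~(2+), which is a subset of $\R^n \setminus M_2$ as well since $\dist_\HH(M_1,M_2) < \tfrac12 r^* - \rho$ by \eqref{eq:rho-choice}. So the loop $\gamma := h_1(c(\cdot),1)$ simultaneously represents $[c]_1$ in $\pi_1(\R^n\setminus M_1)$ and $J([c]_1)$ in $\pi_1(\R^n \setminus M_2)$, and it stays a definite distance away from $M_1 \cup M_2$. (2)~Apply Theorem~\ref{thm:dist-dec-flow} with $M := M_2$, at scale $r^*$, with suitable $0 < \rho' < \delta' < r^*$ chosen so that $\gamma$ and the entire nullhomotopy $\Phi$ — which a priori only lands in $\R^n \setminus M_2$ — can be ``pushed into'' a controlled collar; more carefully, combine the distance-increasing flow $h_{M_2}^+$ to first move $\Phi$ uniformly away from $M_2$, producing a nullhomotopy $\Phi'$ of $h_{M_2}^+(\gamma(\cdot),1)$ that lies in $\R^n \setminus B_{\frac12 r^*-\rho'}(M_2) \subset \R^n \setminus M_1$ (again using the Hausdorff bound). (3)~Since $\gamma$ already lies at distance $\geq \tfrac12 r^* - \rho$ from $M_1$ and, with $\rho$ small enough, also far from $M_2$, the flow $h_{M_2}^+$ only moves points that are close to $M_2$ and fixes $\gamma$ pointwise by property~(3+); hence $h_{M_2}^+(\gamma(\cdot),1) = \gamma$, and $\Phi'$ is a nullhomotopy of $\gamma$ itself inside $\R^n \setminus M_1$. (4)~Conclude: $\gamma$ is nullhomotopic in $\R^n \setminus M_1$, and by step~(1) $[c]_1 = [\gamma]_1 = 0$, so $J$ is injective.

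The main obstacle I anticipate is step~(2)--(3): making precise the interplay between the two sets' collars so that the nullhomotopy $\Phi$ of $\gamma$ in $\R^n \setminus M_2$ can be promoted to a nullhomotopy in $\R^n \setminus M_1$ \emph{without} $\Phi$ ever being forced through $M_1$. The subtlety is that $\Phi$ itself may wander arbitrarily close to $M_2$ (and therefore, via the Hausdorff bound, into $B_{r^*}(M_1)$, which is the only region where the complement geometry is controlled), so one genuinely needs to post-compose $\Phi$ with $h_{M_2}^+(\cdot,1)$ to get it into the ``safe'' annular region $B_{r^*}(M_2) \setminus B_{\frac12 r^* - \rho'}(M_2)$, and then verify that this region avoids $M_1$ — which is exactly where \eqref{eq:rho-choice}, with its factor $\tfrac14$ and the slack $\tfrac14 r^* - \dist_\HH(M_1,M_2)$, is designed to give room. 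Everything else (that the concatenations of homotopies are continuous, that endpoints match, that base points can be handled by a standard path-adjustment) is routine once the scales are bookkept correctly; the quantitative heart is the choice of $\rho, \rho', \delta'$ so that all four inclusions $B_{\cdot}(M_i) \subset \R^n \setminus M_{3-i}$ hold as needed.
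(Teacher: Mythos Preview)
Your overall strategy---post-compose the nullhomotopy $\Phi$ with $h_{M_2}^+(\cdot,1)$ to push it into $\R^n\setminus M_1$---is exactly the paper's approach, and steps (1), (2), (4) are fine. But step~(3) contains a genuine gap: the claim that $h_{M_2}^+$ fixes $\gamma=h_1(c(\cdot),1)$ pointwise via property~(3+) is not justified. Property~(3+) requires $\gamma\subset\R^n\setminus B_{\frac12 r^*}(M_2)$, i.e.\ $\dist(\gamma,M_2)\ge\frac12 r^*$. But all you know from~(2+) is $\dist(\gamma,M_1)\ge\frac12 r^*-\rho$, whence
\[
\dist(\gamma,M_2)\ge\tfrac12 r^*-\rho-\dist_\HH(M_1,M_2),
\]
and this quantity is strictly \emph{less} than $\frac12 r^*$ whenever $\dist_\HH(M_1,M_2)>0$. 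No choice of $\rho>0$ fixes this. So $\gamma$ may well lie inside $B_{\frac12 r^*}(M_2)$, and $h_{M_2}^+$ can move it.

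The repair is straightforward and is what the paper does: do not insist that $\gamma$ is fixed. Instead, show that the \emph{entire} flow $(s,t)\mapsto h_2(\gamma(s),t)$ stays in $\R^n\setminus M_1$. Using monotonicity of $t\mapsto\dist(h_2(x,t),M_2)$ and (1+), (2+), one estimates
\begin{align*}
\dist\big(h_2(\gamma(s),t),M_1\big)
&\ge\dist\big(h_2(\gamma(s),0),M_2\big)-\dist_\HH(M_1,M_2)\\
&=\dist(\gamma(s),M_2)-\dist_\HH(M_1,M_2)\\
&\ge\dist(\gamma(s),M_1)-2\dist_\HH(M_1,M_2)
\ge\tfrac12 r^*-\rho-2\dist_\HH(M_1,M_2)>0
\end{align*}
by the choice of $\rho$ in~\eqref{eq:rho-choice}. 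This gives a homotopy in $\R^n\setminus M_1$ from $\gamma$ to $h_2(\gamma(\cdot),1)$, which then concatenates with $h_2(\Phi(\cdot,\cdot),1)$ to contract $\gamma$ in $\R^n\setminus M_1$, exactly as you want. (Your reference to Theorem~\ref{thm:dist-dec-flow} in step~(2) appears to be a slip; the distance-decreasing flow plays no role here.)
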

\begin{proof}
It suffices to show that for any null-homotopic loop in $\R^n\setminus M_2$ the 
preimage of its equivalence class under $J$ only contains null-homotopic 
loops in $\R^n\setminus M_1$. To that extent let $[c]_1\in\ker J\subset
\pi_1(\R^n\setminus M_1)$,
which means that there is a homotopy $\Psi\in C^0(
\R/\Z\times [0,1],\R^n\setminus
M_2)$ such that $\Psi(\cdot,0)=h_1(\cdot,1)\circ c(\cdot)$ and
$\Psi(\cdot,1)$ is constant. By means of Theorem~\ref{thm:dist-inc-flow}
for $M:=M_2$ (again with respect to the possibly smaller
scale $r^*$ instead of $r_{M_2}$)
we find the smooth function $h_2:=h_{M_2}^+ 
\in C^\infty((\R^n\setminus M_2)\times[0,1],\R^n\setminus M_2)$ 
with all the properties
listed there, as well as
\begin{equation}\label{eq:respect-complements2}
h_2(\R^n\setminus M_2,1)\subset \R^n\setminus M_1
\end{equation}
by means of Corollary \ref{cor:nearby-sets} applied to $M:=M_2$ and
$A:= M_1$. This corollary is applicable since our assumption \eqref{eq:rho-choice}
implies the hypothesis \eqref{eq:nearby-sets} similarly as in our
justification of \eqref{eq:respect-complements} above. Consequently,
the mapping $(s,t)\mapsto h_2(\Psi(s,t),1)$ is a homotopy in $\R^n\setminus M_1$.
By means of the monotonicity of $t\mapsto\dist(h_2(x,t),M_2)$ for any
$x\in\R^n\setminus M_2$ established in Theorem~\ref{thm:dist-inc-flow},
we infer with the help of the properties (1+) and (2+) therein, and 
\eqref{eq:rho-choice},
\begin{eqnarray*}
\dist(h_2(\cdot,t)\circ\Psi(s,0),M_1) & \ge &\dist(h_2(\cdot,t)\circ\Psi(s,0),M_2)
-\dist_{\HH}(M_1,M_2)\\
&\ge &\dist(h_2(\cdot,0)\circ\Psi(s,0),M_2)
-\dist_{\HH}(M_1,M_2)\\
&\overset{\textnormal{(1+)}}{=} &\dist(\Psi(s,0),M_2)
-\dist_{\HH}(M_1,M_2)\\
&\ge &\dist(\Psi(s,0),M_1)
-2\dist_{\HH}(M_1,M_2)\\
&=& \dist(h_1(\cdot,1)\circ c(s),M_1)-2\dist_{\HH}(M_1,M_2)\\
&\overset{\textnormal{(2+)},\eqref{eq:rho-choice}}{>} &
\textstyle\frac12 r^*-\rho-2\big(\frac14 r^*-4\rho\big)\\
& = & 7\rho >0\quad\Foa s,t\in [0,1].
\end{eqnarray*}
Therefore, the mapping $(s,t)\mapsto h_2(\Psi(s,0),t)$ is also a homotopy
in $\R^n\setminus M_1$, as well as $(s,t)\mapsto h_1(c(s),t)$ with $
c(s)=h_1(c(s),0)$ due to property (1+) in Theorem~\ref{thm:dist-inc-flow}
for $M:=M_1$. We now conclude with (1+) for $M:=M_2$:
\begin{align*}
[c]_1 & = [h_1(c,1)]_1=[\Psi(\cdot,0)]_1\overset{\textnormal{(1+)}}{=}
[h_2(\Psi(\cdot,0),0)]_1\\
& = [h_2(\Psi(\cdot,0),1)]_1=[h_2(\Psi(\cdot,1),1)]_1=[const.]_1,
\end{align*}
since $\Psi(\cdot,1)$ is constant. Here we have also used that $(s,t)\mapsto
h_2(\Psi(s,t),1)$ is a homotopy in $\R^n\setminus M_1$ by means of 
\eqref{eq:respect-complements2}. Thus, we have shown that $J$ is injective.
\end{proof}
\begin{lemma}[$J$ is surjective]\label{lem:surjective}
The injective linear mapping $J:\pi_1(\R^n\setminus M_1)\to\pi_1(\R^n\setminus M_2)$ as
defined in \eqref{eq:def-J} is also surjective.
\end{lemma}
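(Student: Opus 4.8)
The plan is to produce an explicit preimage for every class in $\pi_1(\R^n\setminus M_2)$ by running the construction of \eqref{eq:def-J} with the roles of $M_1$ and $M_2$ interchanged, and then to verify that composing the two constructions recovers the original class. Concretely, I would reuse the distance-increasing flow $h_2:=h_{M_2}^+$ already produced in the proof of Lemma~\ref{lem:injective} (for the scale $r^*$ and the parameter $\rho$ from \eqref{eq:rho-choice}), together with the inclusion \eqref{eq:respect-complements2}, and define $J'\colon\pi_1(\R^n\setminus M_2)\to\pi_1(\R^n\setminus M_1)$ by $J'([c]_2):=[h_2(\cdot,1)\circ c]_1$. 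This $J'$ is well defined by exactly the argument used to establish well-definedness of $J$, with \eqref{eq:respect-complements2} in place of \eqref{eq:respect-complements}. It then suffices to prove that $J\circ J'=\Id$ on $\pi_1(\R^n\setminus M_2)$: this forces $J$ to be surjective, and combined with Lemma~\ref{lem:injective} it identifies $J'$ as $J^{-1}$.

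To prove $J\circ J'=\Id$, I would fix a loop $c$ in $\R^n\setminus M_2$ and use two homotopies, both taking place inside $\R^n\setminus M_2$. First, $(s,t)\mapsto h_2(c(s),t)$ is a homotopy in $\R^n\setminus M_2$ (the image of $h_2$ lies in $\R^n\setminus M_2$ by Theorem~\ref{thm:dist-inc-flow}) joining $c$ at $t=0$ (by (1+)) to $h_2(\cdot,1)\circ c$ at $t=1$; hence $[c]_2=[h_2(\cdot,1)\circ c]_2$. Second, writing $y_s:=h_2(c(s),1)$, the map $(s,t)\mapsto h_1(y_s,t)$ joins $h_2(\cdot,1)\circ c$ at $t=0$ (again by (1+), now for $h_1$) to $h_1(\cdot,1)\circ h_2(\cdot,1)\circ c$ at $t=1$, and the latter loop by definition represents $J(J'([c]_2))$. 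Chaining the two homotopies yields $[c]_2=J(J'([c]_2))$, which is what we want; since $[c]_2$ was arbitrary, $J$ is onto.

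The one point that genuinely needs care — and the main obstacle, entirely analogous to the distance estimate in the proof of Lemma~\ref{lem:injective} — is showing that the second homotopy $(s,t)\mapsto h_1(y_s,t)$ never meets $M_2$. Here I would invoke property (2+) of $h_2$, which gives $\dist(y_s,M_2)\ge\tfrac12 r^*-\rho$, hence $\dist(y_s,M_1)\ge\tfrac12 r^*-\rho-\dist_{\HH}(M_1,M_2)>\tfrac14 r^*+3\rho$ using $\dist_{\HH}(M_1,M_2)<\tfrac14 r^*-4\rho$ from \eqref{eq:rho-choice}. Since $t\mapsto\dist(h_1(y_s,t),M_1)$ is non-decreasing by Theorem~\ref{thm:dist-inc-flow}, it follows that $\dist(h_1(y_s,t),M_1)>\tfrac14 r^*+3\rho$ for all $t\in[0,1]$, and therefore $\dist(h_1(y_s,t),M_2)>\tfrac14 r^*+3\rho-\dist_{\HH}(M_1,M_2)>7\rho>0$ for all $s,t\in[0,1]$. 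This keeps the second homotopy inside $\R^n\setminus M_2$; the rest is the bookkeeping rehearsed above, so $J$ is surjective and hence, with Lemma~\ref{lem:injective}, an isomorphism.
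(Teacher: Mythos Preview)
Your proof is correct and follows essentially the same route as the paper's: both exhibit $[h_2(\cdot,1)\circ c]_1$ as a preimage of $[c]_2$ under $J$, verify that the homotopy $(s,t)\mapsto h_1(h_2(c(s),1),t)$ stays in $\R^n\setminus M_2$ via the same chain of distance estimates yielding the lower bound $7\rho$, and then unwind using the homotopy $(s,t)\mapsto h_2(c(s),t)$ in $\R^n\setminus M_2$. Your packaging of the preimage map as $J'$ with $J\circ J'=\Id$ is a cosmetic reformulation of the paper's direct computation $J([h_2(\cdot,1)\circ\tilde c]_1)=[\tilde c]_2$.
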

\begin{proof}
For a given closed curve $\tilde{c}$ in $\R^n\setminus M_2$ representing
$[\tilde{c}]_2\in\pi_1(\R^n\setminus M_2)$ consider the smooth functions $h_i:=
h_{M_i}^+$ from Theorem~\ref{thm:dist-inc-flow} for $M:= M_i$, $i=1,2$, 
but with respect to the common smaller scale $r^*$ defined
in \eqref{eq:r*} instead of $r_{M_i}$,
and with $\rho$ as in \eqref{eq:rho-choice} with all the 
properties listed in that theorem, and satisfying \eqref{eq:respect-complements},
and \eqref{eq:respect-complements2}, respectively.
Then we can estimate, again using that $t\mapsto\dist(h_1(x,t),M_1)$ 
is non-decreasing, properties (1+) for  $h_1$ and (2+) for $h_2$ and
our smallness assumption \eqref{eq:rho-choice}, 
\begin{eqnarray}\label{eq:dist-h1h2}
\dist(h_1(\cdot,t)\circ h_2(\cdot,1)\circ\tilde{c},M_2) & \ge &
\dist(h_1(\cdot,t)\circ h_2(\cdot,1)\circ\tilde{c},M_1)-\dist_{\HH}(M_1,M_2)
\notag\\
&\ge &
\dist(h_1(\cdot,0)\circ h_2(\cdot,1)\circ\tilde{c},M_1)-\dist_{\HH}(M_1,M_2)\notag\\
& \overset{\textnormal{(1+)}}{=} &
\dist(h_2(\cdot,1)\circ\tilde{c},M_1)-\dist_{\HH}(M_1,M_2)\notag\\
&\ge &
\dist(h_2(\cdot,1)\circ\tilde{c},M_2)-2\dist_{\HH}(M_1,M_2)\\
&\overset{\textnormal{(2+)},\eqref{eq:rho-choice}}{>} &
\textstyle \big(\frac12 r^*-\rho\big)-2\big(\frac14 r^*-4\rho
\big) \notag\\
& = & 7\rho > 0,\notag
\end{eqnarray}
to infer that the mapping $(s,t)\mapsto h_1(h_2(\tilde{c}(s),1),t)$
is a homotopy in $\R^n\setminus M_2$. Therefore, by definition \eqref{eq:def-J}
of $J$ and property (1+) for $h_1$
$$
J\big([h_2(\cdot,1)\circ\tilde{c}]_1\big)
\overset{\eqref{eq:def-J}}{=}  [h_1(\cdot,1)\circ h_2(\cdot,1)\circ\tilde{c}]_2
 =  [h_1(\cdot,0)\circ h_2(\cdot,1)\circ\tilde{c}]_2
\overset{\textnormal{(1+)}}{=} 
[h_2(\cdot,1)\circ\tilde{c}]_2=[\tilde{c}]_2,
$$
where for the last equality we used the fact that also $(s,t)\mapsto
h_2(\tilde{c}(s),t)$ is a homotopy in $\R^n\setminus M_2$.
\end{proof}
To investigate peripheral subgroups of the fundamental groups
we first use the distance-decreasing flow of Theorem~\ref{thm:dist-dec-flow}
to derive a quantified sufficient condition for loops $c\in \R^n\setminus M$
to generate elements $[c]$ in the peripheral subgroup of $\pi_1(\R^n\setminus M)$.
\begin{proposition}[Elements of peripheral subgroups]
\label{prop:elements-peripheral}
Let $M\subset\R^n$ be a non-empty, closed and path-connected set with path-connected
complement $\R^n\setminus M$, such that $\delta(M,r_M)<g_n$ for some positive
scale $r_M$. Then the equivalence class $[c]$ of any loop $c:\R/\Z\to
B_{r_M}(M)\setminus M$ is an element of the peripheral subgroup
$P_M$ of $\pi_1(\R^n\setminus M)$.
\begin{proof}
Fix a loop $c\in C^0(\R/\Z,\R^n)$ with image $c(\R/\Z)\subset B_{r_M}(M)\setminus M$. We
need to show that for arbitrary small $\epsilon >0$, the closed curve $c$ is homotopic
to a loop $\tilde{c}$ in $B_\epsilon(M)\setminus M$. If $\epsilon> 
\delta:=\max_{x\in c(\R/\Z)}\dist(x,M)$
the curve
$c$ itself is the desired loop, so we may restrict to $\epsilon \in 
(0,\delta]\subset (0,r_M)$.
Apply Theorem~\ref{thm:dist-dec-flow} for $\rho:=\epsilon$ and
$\delta:=\max_{x\in c(\R/\Z)}\dist(x,M)<r_M$ (assuming w.l.o.g.\@ that 
$\epsilon<\delta$, otherwise increase $\delta$ slightly),
to obtain a smooth function
$h:=h_M^-:\R^n\setminus M\times [0,1]\to\R^n\setminus M$ with all the properties
listed in that theorem. Then the map $(s,t)\mapsto h(c(s),t)
=h(\cdot,t)\circ c(s)$ defines a homotopy in $\R^n\setminus M$,
connecting the curve $c(\cdot)
=h(\cdot,0)\circ c(\cdot)$ with the loop $\tilde{c}(\cdot):=h(c(\cdot),1).$ Property
(2-) of $h$ implies that $\tilde{c}(\R/\Z)\subset B_\epsilon(M).$
\end{proof}
\end{proposition}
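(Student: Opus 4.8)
The plan is to use the distance-decreasing flow of Theorem~\ref{thm:dist-dec-flow} to push the loop $c$ arbitrarily close to $M$, staying inside $\R^n\setminus M$ and preserving its homotopy class. Recall that $[c]\in P_M$ is equivalent to the statement that for every $\epsilon>0$ the loop $c$ is homotopic in $\R^n\setminus M$ to a loop whose image lies in $B_\epsilon(M)\setminus M$.

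First I would fix a suitable scale from compactness: since $c(\R/\Z)$ is compact and contained in the open set $B_{r_M}(M)$, the continuous function $\dist(\cdot,M)$ attains its maximum on $c(\R/\Z)$ and this maximum is $<r_M$, so I may choose $\delta$ with $\max_{s\in\R/\Z}\dist(c(s),M)<\delta<r_M$. Then $c(\R/\Z)\subset B_\delta(M)\setminus M$. Given $\epsilon>0$: if $\epsilon\ge\delta$ the loop $c$ itself already maps into $B_\epsilon(M)\setminus M$ and there is nothing to do, so I may assume $0<\epsilon<\delta<r_M$.

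Next I would invoke Theorem~\ref{thm:dist-dec-flow} with the choices $\rho:=\epsilon$ and this $\delta$ — the required ordering $0<\rho<\delta<r_M$ holds — to obtain the smooth flow $h:=h_M^-:(\R^n\setminus M)\times[0,1]\to\R^n\setminus M$ with properties (1-), (2-), (3-). Then $(s,t)\mapsto h(c(s),t)$ is a continuous homotopy in $\R^n\setminus M$; by (1-) it starts at $c$, and it ends at $\tilde c:=h(c(\cdot),1)$. Since $c(\R/\Z)\subset B_\delta(M)\setminus M$, property (2-) gives $\tilde c(\R/\Z)\subset B_\rho(M)\setminus M=B_\epsilon(M)\setminus M$. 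Hence $[c]=[\tilde c]$ with $\tilde c$ running inside $B_\epsilon(M)\setminus M$, and since $\epsilon>0$ was arbitrary, $[c]\in P_M$.

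I do not expect a genuine obstacle here, because all the substance has already been packed into Theorem~\ref{thm:dist-dec-flow}; the present argument is just a clean application. The only points deserving a moment's care are that the scale $\delta$ can indeed be chosen strictly below $r_M$ — which rests on the compactness of the continuous image $c(\R/\Z)$ inside the open neighborhood $B_{r_M}(M)$ — and that the composed map $(s,t)\mapsto h(c(s),t)$ really takes values in $\R^n\setminus M$, so that it is an admissible homotopy for $\pi_1(\R^n\setminus M)$ and the class $[c]$ is genuinely preserved; both are immediate from the construction of $h_M^-$.
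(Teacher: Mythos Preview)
Your proof is correct and follows essentially the same route as the paper: apply the distance-decreasing flow $h_M^-$ from Theorem~\ref{thm:dist-dec-flow} with $\rho:=\epsilon$ to push $c$ into $B_\epsilon(M)\setminus M$ via the homotopy $(s,t)\mapsto h_M^-(c(s),t)$. The only cosmetic difference is that you pick $\delta$ strictly larger than $\max_s\dist(c(s),M)$ from the outset, while the paper sets $\delta$ equal to this maximum and then enlarges it slightly when $\epsilon=\delta$; your choice avoids that case distinction.
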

Now we are in the position to give the full 
proof of our first main result.\\
{\it Proof of Theorem~\ref{thm:isomorphic}.}\,
The mapping $J:\pi_1(\R^n\setminus M_1)\to\pi_1(\R^n\setminus M_2)$ defined in
\eqref{eq:def-J} is an isomorphism as shown in Lemmas \ref{lem:injective} and
\ref{lem:surjective}. It remains to be checked that $J$ restricted to the
peripheral subgroup $P_{M_1}\subset\pi_1(\R^n\setminus M_1) $ maps $P_{M_1}$
isomorphically onto the peripheral subgroup $P_{M_2}$ of $\pi_1(\R^n\setminus M_2)$.
In the light of Lemma~\ref{lem:injective} is it sufficient 
to show $J(P_{M_1}) = P_{M_2}$.\par
``$\subset$'':
For $p\in P_{M_1}$ take a representative loop $c$ in $\R^n\setminus M_1$, i.e., 
$[c]_1=p$, with 
\begin{equation}\label{eq:close-loop}
\textstyle
\sup_{s\in\R/\Z}\dist\big(c(s),M_1\big)< \frac12 \min\{r_{M_1},r_{M_2}\}=\frac12 r^*,
\end{equation}
where $r^*$ was defined in \eqref{eq:r*}.
Let $h_1:=h_{M_1}^+:\R^n\setminus M_1\to\R^n\setminus M_1$
be the smooth function\footnote{This function was the one used for
the definition of the isomorphism $J$ in \eqref{eq:def-J}.}
of Theorem~\ref{thm:dist-inc-flow} for $M:=M_1,
$ but using the scale $r^*$   
instead of $r_{M_1}$ and $\rho$ as in \eqref{eq:rho-choice}.
Then, properties (2+) and (3+) of that theorem imply
\begin{equation}\label{eq:peripheral-ineq}
\textstyle
0<\frac14{r^*}<\frac12{r^*}-\rho\overset{\textnormal{(2+)}}{\le}\dist(h_1(c(s),1),M_1)\le
\frac12{r^*}\quad\Foa s\in \R/\Z,
\end{equation}
since if there were some $s\in\R/\Z$ such that $\dist (h_1(c(s),1),M_1)>
\frac12{r^*}$ then by continuity of $h_1$ we could find some $t^*\in (0,1)$
such that 
$$
\textstyle
\frac12{r^*}=\dist(h_1(c(s),t^*),M_1)\overset{\textnormal{(3+)}}{=}
\dist(h_1(c(s),1),M_1)>\frac12{r^*},
$$
contradiction. Moreover, by Corollary \ref{cor:nearby-sets} for $M:=M_1$ at scale
$r^*$ (instead of $r_{M_1}$), and $A:=M_2$, we have
\begin{equation}\label{eq:respect-complements3}
h_1(\R^n\setminus M_1,1)\subset\R^n\setminus M_2,
\end{equation}
since by means of \eqref{eq:rho-choice},
$\dist_{\HH}(M_1,M_2)<\frac14 r^*-4\rho<\frac12 r^*-\rho$,
so that hypothesis \eqref{eq:nearby-sets} of Corollary \ref{cor:nearby-sets}
is satisfied. In particular, $h_1(c(\R/\Z),1)\subset\R^n\setminus M_2$ so that
by \eqref{eq:peripheral-ineq} and hypothesis \eqref{eq:hausdorff}
\begin{eqnarray*}
0  < \dist(h_1(c(s),1),M_2) &\le &\dist(h_1(c(s),1),M_1)+\dist_{\HH}(M_1,M_2)\\
&\overset{\eqref{eq:peripheral-ineq},\eqref{eq:hausdorff}}{<} &
\textstyle
\frac12{r^*}+\frac14 \min\{r_{M_1},r_{M_2}\}
=\frac34 \min\{r_{M_1},r_{M_2}\}< r_{M_2}
\end{eqnarray*}
for all $s\in\R/\Z$.
Therefore, $h_1(c(\R/\Z),1)\subset B_{r_{M_2}}(M_2)\setminus M_2$. 

By definition \eqref{eq:def-J} of the isomorophism $J$ we find
$$
J(p)=J([c]_1)\overset{\eqref{eq:def-J}}{=}[h_1(\cdot,1)\circ c]_2\in P_{M_2}
\subset\pi_1(\R^n\setminus M_2)
$$
by means of Proposition \ref{prop:elements-peripheral} applied to
$M:=M_2$ and to the loop $h_1(\cdot,1)\circ c$.

``$\supset$'':
We finally need to show that $P_{M_2}\subset J(P_{M_1})$. For $q\in P_{M_2}$
there is a representative loop $\tilde{c}:\R/\Z\to\R^n\setminus M_2$ with
$[\tilde{c}]_2=q$ and $\sup_{s\in\R/\Z}\dist(\tilde{c}(s),M_2)<\frac12 r^*$.
According to Theorem~\ref{thm:dist-inc-flow} applied to $M:=M_2$ and
the smaller scale $r^*\le r_{M_2}$ and to 
$\rho$ as in \eqref{eq:rho-choice}, we find
the smooth function $h_2:=h_{M_2}^+:\R^n\setminus M_2\to\R^n\setminus M_2$ with 
all the properties listed in that theorem but
at scale $r^*$ (instead of $r_{M_2}$). Using properties (2+) and (3+)
in the same way as for $h_1$ above we obtain
\begin{equation}\label{eq:dist-peripheral2}
\textstyle
0<\frac14{r^*} <\frac12{r^*} -\rho\overset{\textnormal{(2+)}}{\le}
\dist\big(h_2(\tilde{c}(s),1),M_2\big)
\le\frac12{r^*}\quad\Foa s\in\R/\Z.
\end{equation}
From Corollary \ref{cor:nearby-sets} we deduce
\begin{equation}\label{eq:respect-complements4}
h_2(\R^n\setminus M_2,1)\subset\R^n\setminus M_1.
\end{equation}
Consequently,  by \eqref{eq:dist-peripheral2} and hypothesis \eqref{eq:hausdorff},
\begin{eqnarray*}
0<\dist\big(h_2(\cdot,1)\circ\tilde{c},M_1\big) & \le &
\dist\big(h_2(\cdot,1)\circ\tilde{c},M_2\big)+
\dist_{\HH}(M_1,M_2)\\
& \overset{\eqref{eq:dist-peripheral2},\eqref{eq:hausdorff}}{<} &
\textstyle
\frac12{r^*}+\frac14{r^*}=\frac34 \min\{r_{M_1},r_{M_2}\}<r_{M_1};
\end{eqnarray*}
hence $h_2(\tilde{c}(\R/\Z),1)\subset B_{r_{M_1}}(M_1)\setminus M_1$, which leads
us to set $p:=[h_2(\cdot,1)\circ\tilde{c}]_1\in P_{M_1}\subset\pi_1(\R^n\setminus M_1)$
by Proposition \ref{prop:elements-peripheral}.
Exactly\footnote{The only prerequisite for the estimate \eqref{eq:dist-h1h2} was that
$\tilde{c}$ is a loop in $\R^n\setminus M_2$ which we have in the present
context as well.}
as in \eqref{eq:dist-h1h2} in the proof of Lemma \ref{lem:surjective}
we can show that the mapping $(s,t)\mapsto h_1(h_2(\tilde{c}(s),1),t)$ defines a homotopy in 
$\R^n\setminus M_2$,  
 connecting the loop 
$$
h_2(\cdot,1)\circ\tilde{c}\overset{\textnormal{(1+)}}{=}h_1(h_2(\cdot,1)\circ\tilde{c},0)
$$
with $h_1(h_2(\cdot,1)\circ\tilde{c},1)$ so that by definition \eqref{eq:def-J} of $J$,
\begin{align*}
J(p)& = J([h_2(\cdot,1)\circ\tilde{c}]_1)\overset{\eqref{eq:def-J}}{=}
[h_1(\cdot,1)\circ h_2(\cdot,1)\circ\tilde{c}]_2\\
& = [h_2(\cdot,1)\circ\tilde{c}]_2=[\tilde{c}]_2=q,
\end{align*}
as desired. For the last equality we have used the fact that
$(s,t)\mapsto h_2(\tilde{c}(s),t)$ is a homotopy in $\R^n\setminus M_2$
connecting $\tilde{c}=h_2(\tilde{c},0)$ with $h_2(\cdot,1)\circ\tilde{c}$.
\hfill $\Box$

\section{Stability of knot equivalence}\label{sec:stability}
The equivalence of two tame knots $\g_1,\g_2\in C^0(\R/\Z,\R^3)$
with local distortion below $g_3$ (or even below
$\frac{\pi}{\sqrt{8}}$) at some positive scales, such that the knots 
are sufficiently close in 
terms of these scales, was stated in Corollary \ref{cor:equiv-knots}. This was a direct
consequence of Theorem~\ref{thm:isomorphic} applied to the knots' images $M_i:=
\g_i(\R/\Z)$, $i=1,2,$ in combination with Theorem~\ref{thm:gordon-luecke}. 
We are going to apply Corollary \ref{cor:equiv-knots} now
to a knot and a small Lipschitz perturbation of that knot to
prove the Lipschitz stability of knot equivalence, i.e., 
Corollary \ref{cor:lipschitz-stability}. For that we specify the local distortion
at some scale $r>0$ as
defined in \eqref{eq:local-distortion} in Definition \ref{def:local-distortion}
to the image $M:=\eta(\R/\Z)$ of a knot $\eta\in C^0(\R/\Z,\R^n)$ by writing
\begin{equation}\label{eq:local-distortion-knots}
\delta(\eta,r)\equiv\delta(\eta(\R/\Z),r) 
=\sup_{u,v\in\R/\Z, u\not= v\atop 0<|\eta(u)-\eta(v)|\le 2r}
\frac{d_\eta\big(\eta(u),\eta(v)\big)}{|\eta(u)-\eta(v)|},
\end{equation}
where the intrinsic distance $d_\eta(\eta(u),\eta(v))$ reduces to the periodic 
parameter distance $|u-v|_{\R/\Z}$ in case of arclength parametrized curves $\eta$.

\begin{proof}[Proof of Corollary {\ref{cor:lipschitz-stability}}]
By assumption there is a scale $r_\g\in(0,\tfrac12)$ such that $\delta(\g,r_\g)< g_3$ 
(or even
less than $\frac{\pi}{\sqrt{ 8 }}$.) Our aim is to show that for a given
function $h\in C^{0,1}(\R/\Z,\R^3)$ the perturbed
curve $\g_\epsilon=\g+\epsilon h$ has also local distortion
$\delta(\g_\epsilon,\hat{r}_{\g})<g_3 $ at
a (possibly smaller) scale $\hat{r}_\g>0$ independent of $0<|\epsilon|\ll 1$. Keeping
that scale fixed we can then take $|\epsilon| $ so small 
(depending on~$h$) that the $L^\infty$-distance
is controlled in terms of the two scales $r_\g$ and $\hat{r}_\g$.

To that extent consider $x,y\in [0,1]$ with $x<y$ 
to estimate
\begin{equation}\label{eq:euclid-eps}
|\g_\epsilon(x)-\g_\epsilon(y)|=
|\g(x)-\g(y)+\epsilon(h(x)-h(y))|\ge |\g(x)-\g(y)|-|\epsilon|\|h'\|_{L^\infty}(y-x).
\end{equation}
The intrinsic distance $d_{\g_\epsilon}(\g_\epsilon(x),\g_\epsilon(y))\equiv
d_{\g_\epsilon}(x,y)$ on $\g_\epsilon$ may be compared
to $d_\g(\g(x),\g(y))\equiv d_\g(x,y)$ on $\g$ by means of
\begin{align}\label{eq:intrinsic-eps-prep}
\textstyle
\big|\int_x^y |\g_\epsilon'(u)|du & -
\textstyle
\int_x^y |\g'(u)|du\big|=
\big|\int_x^y \big(|\g_\epsilon'(u)|-|\g'(u)|\big)du\big|\le\int_x^y
\big| |\g_\epsilon'(u)|-|\g'(u)|\big|du\notag\\
&\textstyle
\le\int_x^y |\g_\epsilon'(u)-\g'(u)|du=
\int_x^y|\epsilon h'(u)|du\le |\epsilon|\|h'\|_{L^\infty}(y-x),
\end{align}
whence
\begin{equation}\label{eq:intrinsic-eps}
\textstyle
d_{\g_\epsilon}(x,y)\le\int_x^y|\g_\epsilon'(u)|du
\overset{\eqref{eq:intrinsic-eps-prep}}{\le}
\int_x^y|\g'(u)|du+|\epsilon|\|h'\|_{L^\infty}(y-x)
\le(y-x)(1+|\epsilon|\|h'\|_{L^\infty}).
\end{equation}
Without loss of generality we may assume that 
$0< y-x\le\tfrac12$, so
$d_\g(x,y) = y-x$.
Combining \eqref{eq:euclid-eps} with \eqref{eq:intrinsic-eps} we deduce supposing
 $|\g(x)-\g(y)|<2r_\g$
\begin{align}
\begin{split}\label{eq:distortion-eps}
\textstyle\frac{d_{\g_\epsilon}(x,y)}{|\g_\epsilon(x)-\g_\epsilon(y)|}
&  \overset{\eqref{eq:intrinsic-eps},\eqref{eq:euclid-eps}}{\le}
\textstyle
\frac{d_{\g}(x,y)(1+|\epsilon|\|h'\|_{L^\infty})}{|\g(x)-\g(y)|-
|\epsilon|\|h'\|_{L^\infty}(y-x)}=
 \frac{d_{\g}(x,y)(1+|\epsilon|\|h'\|_{L^\infty})}{|\g(x)-\g(y)|
 \big(1-|\epsilon|\|h'\|_{L^\infty}\frac{d_\g(x,y)}{|\g(x)-\g(y)|}\big)} \\
 & \le \textstyle
 \delta(\g,r_\g)\cdot\frac{1+|\epsilon|\|h'\|_{L^\infty}}{
 1-|\epsilon|\|h'\|_{L^\infty}\delta(\g,r_\g)}<g_3,
\end{split}
\end{align}
if
\begin{equation}\label{eq:eps2}
\textstyle
|\epsilon|\le\epsilon_0:=\min\big\{\frac{g_3-\delta(\g,r_\g)}{\delta(\g,r_\g)(1+g_3)\|h'\|_{L^\infty}+1},\frac{r_\g}{\|h'\|_{L^\infty}+1}\big\}>0.
\end{equation}
Consequently, if $|\g_\epsilon(x)-\g_\epsilon(y)|\le2\hat r_\g$ where $\hat r_\g=\tfrac12 r_\g$
we infer
$|\g(y)-\g(x)|\le 2r_\g$, so $\delta(\g_\epsilon,\hat{r}_\g)<g_3$
for all $|\epsilon|\le\epsilon_0$
due to \eqref{eq:distortion-eps} and
\eqref{eq:eps2}.
It is important to notice that the scale
$\hat{r}_\g$ does \emph{not} depend on $\epsilon\in [-\epsilon_0,\epsilon_0]$.
With $\|\g-\g_\epsilon\|_{L^\infty}=|\epsilon|\|h\|_{L^\infty}$ this allows
us to choose
\begin{equation}\label{eq:final-epsilon-choice}
\textstyle
\epsilon_{\g,h}:=\min\Big\{\epsilon_0,\frac{\hat{r}_\g}{8\|h \|_{
L^\infty}+1}\Big\}
<\frac14 \min\{r_\g,\hat{r}_\g\}
\end{equation}
to also satisfy the hypothesis on the $L^\infty$-distance in Corollary 
\ref{cor:equiv-knots} to conclude that $\g\sim\g_\epsilon$ for all
$|\epsilon|\le\epsilon_{\g,h}.$ In addition, $\essinf_{\R/\Z}|\g_\epsilon'|
\ge\essinf_{\R/\Z}|\g'|-|\epsilon|\|h'\|_{L^\infty} > \frac12$
for all $|\epsilon|\le\epsilon_{\g,h}$, since $\epsilon_{\g,h}\le\epsilon_0\le
\tfrac12(\|h'\|_{L^\infty}+1)^{-1}$.
\end{proof}

Before treating the stability of knot equivalence in the setting of
fractional Sobolev regularity let us point out that bounding  the
bilipschitz constant\footnote{\label{foot:bilip}In \cite{freedman-etal_1994} a bilipschitz constant
is defined as any number $L>0$ such that $L^{-1}d_\gamma(u,v)|\le|\gamma(u)-\gamma(v)|\le Ld_\gamma(u,v)$
holds for all $u,v\in\R/\Z$.
If $\gamma$ is parametrized by arclength then the infimum of those $L$
is $\bilip_I(\g)^{-1}$.}
\begin{equation}\label{eq:def-bilip}
\bilip_I(\g):=\inf_{\substack{u,v\in I\\u\ne v}}\frac{|\g(u)-\g(v)|}{|u-v|_{\R/\Z}}
\end{equation}
on an interval $I\subset\R$ from below, is closely related to
controlling the local distortion $\delta(\g,\cdot)$ at some
scale related to $I$, if the absolutely continuous parametrization $\g$
has positive
speed $v_\g=\essinf_{\R/\Z}|\g'|$, i.e., if $\g$ is an immersed curve.

The seminorm $\lfloor \g'\rfloor $ defining fractional Sobolev spaces allows us to bound
$\bilip(\g)$ as  in the following lemma, which is essentially contained
in \cite[Lemma 2.1]{blatt_2012a}; see also \cite[Lemma 2.1]{blatt-etal_2024}.

\begin{lemma}\label{lem:seminorm-bilip}
Let $I\subset\R$ be an interval and $\g:I\to\R^n$ an absolutely continuous
immersed curve with positive speed $v_\g=\essinf_{I}|\g'|$. 
Then one has for all $s,t\in I$ with $s<t$
\begin{equation}\label{eq:seminorm-bilip}
|\g(t)-\g(s)|^2\ge\Big\{v_\g^2-\frac12\int_s^t\int_s^t\frac{|\g'(u)-\g'(v)|^2}{|u-v|^2}dudv\Big\}|t-s|^2.
\end{equation}
\end{lemma}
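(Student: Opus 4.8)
The plan is to start from the fundamental theorem of calculus for absolutely continuous curves, $\g(t)-\g(s)=\int_s^t\g'(u)\,du$, and to expand the squared Euclidean norm as a double integral,
\[
|\g(t)-\g(s)|^2=\int_s^t\int_s^t\langle\g'(u),\g'(v)\rangle_{\R^n}\,du\,dv,
\]
which is justified by Fubini's theorem since $\g'\in L^\infty(I,\R^n)$ and $[s,t]$ is bounded. The point of departure is thus purely algebraic: everything will follow from a pointwise identity for the integrand combined with the elementary estimate $|u-v|\le|t-s|$ valid on $[s,t]\times[s,t]$.

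Next I would apply the polarization identity $\langle a,b\rangle_{\R^n}=\tfrac12\big(|a|^2+|b|^2-|a-b|^2\big)$ to the integrand and split the double integral into its ``diagonal'' and ``off-diagonal'' contributions. By symmetry in $u\leftrightarrow v$ and Fubini,
\[
\tfrac12\int_s^t\int_s^t\big(|\g'(u)|^2+|\g'(v)|^2\big)\,du\,dv=(t-s)\int_s^t|\g'(u)|^2\,du\ge v_\g^2\,(t-s)^2,
\]
using $|\g'(u)|\ge v_\g$ for a.e.\ $u\in I$, while the remaining term is $-\tfrac12\int_s^t\int_s^t|\g'(u)-\g'(v)|^2\,du\,dv$.

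The final step is to relate this off-diagonal term to the fractional seminorm appearing in \eqref{eq:seminorm-bilip}: for $u,v\in[s,t]$ with $u\ne v$ one has $1\le|t-s|^2/|u-v|^2$ (and $\{u=v\}$ is a null set), so
\[
\int_s^t\int_s^t|\g'(u)-\g'(v)|^2\,du\,dv\le|t-s|^2\int_s^t\int_s^t\frac{|\g'(u)-\g'(v)|^2}{|u-v|^2}\,du\,dv.
\]
Substituting the diagonal lower bound and this off-diagonal upper bound into the polarized expression yields exactly the claimed inequality (the inequality being trivial if the iterated integral on the right is infinite). I do not expect a genuine obstacle here; the only points deserving a brief remark are the applicability of the fundamental theorem of calculus and of Fubini, both immediate from absolute continuity of $\g$ together with $\g'\in L^\infty(I)$, and the harmless comparison $1\le|t-s|^2/|u-v|^2$, which is invoked only off the diagonal.
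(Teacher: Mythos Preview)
Your proof is correct and follows exactly the same route as the paper's: express $|\g(t)-\g(s)|^2$ as the double integral of $\langle\g'(u),\g'(v)\rangle$, polarize, bound the diagonal part below by $v_\g^2|t-s|^2$, and estimate the off-diagonal term via $|u-v|\le|t-s|$. One minor remark: the hypotheses do not give $\g'\in L^\infty$, but Fubini is already justified by $\g'\in L^1([s,t])$, which comes from absolute continuity.
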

As an immediate consequence one deduces
\begin{corollary}[Fractional seminorm controls bilipschitz constant]
\label{cor:seminorm-bilip}
If $\g\in W^{\frac32, 2}(I,\R^n)$ is arclength parametrized, then
\begin{equation}\label{eq:seminorm-bilip2}
|\g(t)-\g(s)|^2\ge\big(1-\frac12 \lfloor\g'\rfloor^2_{\frac12,2,I}\big)|t-s|^2\quad\Foa
s,t\in I.
\end{equation}
\end{corollary}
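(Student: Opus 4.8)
The plan is to simply specialize Lemma~\ref{lem:seminorm-bilip} to the arclength case. First I would note that an arclength parametrized curve satisfies $|\g'(u)|=1$ for a.e.\ $u\in I$, so that its speed is $v_\g=\essinf_I|\g'|=1$. Plugging $v_\g=1$ into the right-hand side of \eqref{eq:seminorm-bilip} already produces an expression of the desired shape, and it only remains to replace the double integral over $[s,t]\times[s,t]$ by the Gagliardo seminorm over $I\times I$.

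For that second step I would invoke monotonicity of the integral with respect to the domain: since $s,t\in I$ with $s<t$, we have $[s,t]\times[s,t]\subset I\times I$, and the integrand $\frac{|\g'(u)-\g'(v)|^2}{|u-v|^2}$ is nonnegative, hence
\begin{equation*}
\int_s^t\int_s^t\frac{|\g'(u)-\g'(v)|^2}{|u-v|^2}\,du\,dv
\le\int_I\int_I\frac{|\g'(u)-\g'(v)|^2}{|u-v|^2}\,du\,dv
=\lfloor\g'\rfloor^2_{\frac12,2,I},
\end{equation*}
where the last equality is just the definition of the fractional seminorm $\lfloor\cdot\rfloor_{\frac12,2,I}$ (note $1+sp=2$ for $s=\tfrac12$, $p=2$). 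Combining this bound with \eqref{eq:seminorm-bilip} and $v_\g^2=1$ yields \eqref{eq:seminorm-bilip2} for all $s,t\in I$, the case $s=t$ being trivial.

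There is essentially no obstacle here: the corollary is an immediate consequence of the lemma once one matches the seminorm convention and uses domain monotonicity. The only point requiring a word of care is that the bound in Lemma~\ref{lem:seminorm-bilip} is stated with the \emph{local} double integral over $[s,t]^2$, so one does lose nothing, but also gains nothing, by enlarging it to $I^2$; if one wanted a sharper statement one could keep the local seminorm $\lfloor\g'\rfloor_{\frac12,2,[s,t]}$ instead, which tends to $0$ as $t\to s$ and thus recovers $|\g(t)-\g(s)|=|t-s|(1+o(1))$. For the purposes used later in the paper, however, the stated global form \eqref{eq:seminorm-bilip2} suffices.
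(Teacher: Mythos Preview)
Your proof is correct and matches the paper's approach exactly: the paper does not even spell out a proof of this corollary, introducing it with ``As an immediate consequence one deduces'', and your argument---specializing Lemma~\ref{lem:seminorm-bilip} to $v_\g=1$ and enlarging the domain of the nonnegative double integral from $[s,t]^2$ to $I^2$---is precisely the intended immediate deduction.
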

\begin{proof}[Proof of Lemma {\ref{lem:seminorm-bilip}}]
For parameters $s,t\in I$ with $s<t$ estimate
\begin{gather*}
|\g(s) -\g(t)|^2  
\textstyle
= \int_s^t\int_s^t\langle\g'(\sigma),\g'(\tau)\rangle_{\R^n}d\sigma 
d\tau
 \textstyle
= \frac12  \int_s^t\int_s^t \big(
|\g'(\tau)|^2+
|\g'(\sigma)|^2-|\g'(\tau)-\g'(\sigma)|^2\big)d\sigma d\tau\\
\textstyle
\ge v_\g^2|t-s|^2-\frac12\int_s^t\int_s^t|\g'(\tau)-\g'(\sigma)|^2d\sigma d\tau
\ge\textstyle
|t-s|^2\big\{v_\g^2-\frac12\int_s^t\int_s^t
\frac{|\g'(\tau)-\g'(\sigma)|^2}{|\tau-\sigma|^2}d\sigma d\tau\big\}.\qedhere
\end{gather*}
\end{proof}

We observe in \eqref{eq:seminorm-bilip2} that a small fractional seminorm
yields a bilipschitz  estimate, which is the key ingredient for the proof 
of the fractional stability of knot equivalence.

\noindent
{\it Proof of Corollary \ref{cor:fractional-stability}.}\,
Due to the absolute continuity of the double integral defining the
fractional seminorm there is a radius $\rho=\rho(\g)\in (0,\frac14]$  such
that $\sup_{x\in\R/\Z}\lfloor \g'\rfloor_{\frac12,2,B_\rho(x)} <\frac1{2\sqrt{2}}.$
Therefore,
\begin{equation}\label{eq:eta-seminorm}
\textstyle
\sup_{x\in\R/\Z}\lfloor \eta' \rfloor_{\frac12,2,B_\rho(x)}
\le
\sup_{x\in\R/\Z}\lfloor \g' \rfloor_{\frac12,2,B_\rho(x)}+\lfloor
\eta'-\g' \rfloor_{\frac12,2} < \frac1{\sqrt{2}}
\end{equation}
for all $\eta \in B_{\frac1{2\sqrt{2}}}(\g)\subset W^{\frac32,2}(\R/\Z,\R^3)$. By
means of Corollary \ref{cor:seminorm-bilip} we infer
\begin{equation}\label{eq:bilip-gamma-eta}
\textstyle
\min\big\{|\g(t)-\g(s)|,|\eta(t)-\eta(s)|\big\}>\frac{\sqrt{3}}2 
|t-s|\ge\frac{\sqrt{3}}2 d_\g(t,s)
=\frac{\sqrt{3}}2 d_\eta(t,s)
\end{equation}
for all arclength parametrized curves $\eta\in B_{\frac1{2\sqrt{2}}}(\g)\subset
W^{\frac32,2}(\R/\Z,\R^3)$ and all parameters $s,t\in\R/\Z$ with
intrinsic distance $d_\g(t,s)\equiv d_\g(\g(s),\g(t))=|t-s|_{\R/\Z}=
d_\eta(\eta(s),\eta(t))\equiv d_\eta(t,s)<2\rho.$

On the other hand,
$$
\textstyle
\sigma_\g:=
\inf\{|\g(t)-\g(s)|:d_\g(s,t)=|t-s|_{\R/\Z}\ge 2\rho\}\in (0,\frac12].
$$
By Sobolev embedding there is a constant $C_E$ such that $\|\zeta\|_{L^\infty}
\le C_E\|\zeta\|_{W^{\frac32,2}}$ for all $\zeta\in W^{\frac32,2}(\R/\Z,\R^3)$,
so that
\begin{equation}\label{eq:infty-distance-eta-gamma}
\textstyle
\|\eta-\gamma\|_{L^\infty}< \frac{\sigma_\g}{16}\quad\Foa\eta\in B_{\epsilon_\g}(\g)
\subset W^{\frac32,2}(\R/\Z,\R^3),
\end{equation}
where we have set $\epsilon_\g:=\min\{\frac1{2\sqrt{2}},\frac{\sigma_\g}{16C_E}\}$.
Therefore, for all $s,t\in\R/\Z$ with $d_\g(s,t)=d_\eta(s,t)
\ge 2\rho$ one has
\begin{equation}\label{eq:euclid-eta-gamma-global}
\textstyle
\min\big\{|\g(t)-\g(s)|,|\eta(t)-\eta(s)|\big\}>\frac{7}{8}\sigma_\g
>\frac{1}{2}\sigma_\g.
\end{equation}
Defining the common scale $r_{\g}:=\frac{\sigma_\g}4$ we find in view of 
\eqref{eq:bilip-gamma-eta} and \eqref{eq:euclid-eta-gamma-global}
\begin{equation}\label{eq:max-eta-gamma-distortion}
\textstyle
\max\big\{\delta(\g,r_\g),\delta(\eta,r_\g)\big\}\le\frac{2}{\sqrt{3}}<g_3
\end{equation}
for all arclength parametrized curves $\eta\in B_{\epsilon_\g}(\g)\subset
W^{\frac32,2}(\R/\Z,\R^3)$, since 
$g_3=\frac{\sqrt{32}}{\sqrt{27}}\frac{\pi}{\sqrt{8}}\in 
(\frac2{\sqrt{3}},\frac{\pi}2)$.
Inequality \eqref{eq:infty-distance-eta-gamma} allows us to apply
Corollary \ref{cor:equiv-knots} to conclude. Notice that we also used
the fact that arclength parametrized $W^{\frac32,2}$-knots have finite
M\"obius energy according to \cite[Theorem~1.1]{blatt_2012a}, and
are therefore tame \cite[Theorem~4.1]{freedman-etal_1994}, which is needed
in Corollary \ref{cor:equiv-knots}.\hfill $\Box$

\section{Harmonic substitution}\label{sec:harmonic}
In this section we study the effects on the local distortion of
a curve if we replace some subarcs of
a curve by straight segments, since this is
what we will do later to adequately modify minimizing sequences  for the
M\"obius energy. By Blatt's characterization \cite[Theorem~1.1]{blatt_2012a}
of the corresponding
energy space we can assume  arclength parametrized  curves of class 
$W^{\frac32,2}(\R/\Z,\R^n)$, but keep in mind 
that such a replacement by straight
segments generally destroys this fractional Sobolev regularity.

We begin with a result that basically reflects the embedding of $W^{\frac32,2}$
into the space of bounded mean oscillation (BMO). Denote the integral mean of a function
$f$  over a set $A$ by 
$$
(f)_A\equiv\mvint_A f(z)\,dz := \frac{1}{|A|}\int_A f(z)\,dz,
$$
and abbreviate the annular
region 
\begin{equation}\label{eq:annular}
A_{r,\theta}(x):= B_r(x)\setminus B_{\theta r}(x)\subset\R/\Z\quad\Fo \theta\in (0,1),
\,x\in\R/\Z,
\end{equation}
with measure 
\begin{equation}\label{eq:measure-annulus}
|A_{r,\theta}(x)|=2r(1-\theta).
\end{equation}
\begin{lemma}[Quantitative embedding $W^{\frac32,2}\hookrightarrow \,\textnormal{BMO}$]
\label{lem:quant-embed}
Assume that for an arclength parametrized curve $\eta\in W^{\frac32,2}(\R/\Z,\R^n)$
there is a parameter
$x\in\R/\Z$ and there exist constants $\theta,r\in (0,\frac12)$, such that
\begin{equation}\label{eq:frac-seminorm-theta}
\lfloor \eta'\rfloor^2_{\frac12,2,A_{r,\theta}(x)} < \theta.
\end{equation}
Then we have
\begin{equation}\label{eq:bmo1}
\mvint_{B_r(x)}|\eta'(z)-(\eta')_{B_r(x)}|^2\,dz < 8\theta,
\end{equation}
and if $\theta < \frac18 $ in addition, then $|(\eta')_{B_r(x)}|>0$ and
\begin{equation}\label{eq:bmo2}
\mvint_{B_r(x)}|\eta'(z)-\nu|^2\,dz < 32\theta,
\end{equation}
where 
\begin{equation}\label{eq:bmo3}
\nu\equiv \nu_{B_r(x)}
:=\frac{(\eta')_{B_r(x)}}{|(\eta')_{B_r(x)}|}\in\S^{n-1}.
\end{equation}
\end{lemma}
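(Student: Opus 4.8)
The plan is to read the seminorm hypothesis \eqref{eq:frac-seminorm-theta} as a bound on the averaged $L^2$-oscillation of $\eta'$ over the annulus $A:=A_{r,\theta}(x)$, and then to transfer it to the full ball $B_r(x)$, paying only an additive term of order $\theta$ for the uncontrolled inner ball $B_{\theta r}(x)$; throughout one uses that $\eta$ is parametrized by arclength, so $|\eta'|\equiv1$ a.e. First I would record the elementary identity
\[
\mvint_{A}\big|\eta'(z)-(\eta')_{A}\big|^2\,dz=\frac1{2|A|^2}\int_{A}\int_{A}|\eta'(u)-\eta'(v)|^2\,du\,dv,
\]
valid for any measurable $A\subset\R/\Z$ of finite positive measure (expand the square and integrate). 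Since $r<\tfrac12$, balls of radius $r$ in $\R/\Z$ have diameter $2r<1$, so $|u-v|\le2r$ for all $u,v\in A\subset B_r(x)$; hence $\int_A\int_A|\eta'(u)-\eta'(v)|^2\le(2r)^2\lfloor\eta'\rfloor_{\frac12,2,A}^2<(2r)^2\theta$. Together with $|A|=2r(1-\theta)$ (see \eqref{eq:measure-annulus}) and $\theta<\tfrac12$ this gives the annulus estimate $\mvint_{A}|\eta'-(\eta')_{A}|^2<\tfrac{\theta}{2(1-\theta)^2}<2\theta$.

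To pass to the full ball I would use that $c\mapsto\mvint_{B_r(x)}|\eta'-c|^2$ is minimized at $c=(\eta')_{B_r(x)}$; choosing $c=(\eta')_A$ and splitting $B_r(x)=A\cup B_{\theta r}(x)$ (relative measures $1-\theta$ and $\theta$ by \eqref{eq:measure-annulus}),
\begin{align*}
\mvint_{B_r(x)}\big|\eta'-(\eta')_{B_r(x)}\big|^2
&\le\mvint_{B_r(x)}\big|\eta'-(\eta')_A\big|^2\\
&=(1-\theta)\,\mvint_A\big|\eta'-(\eta')_A\big|^2+\theta\,\mvint_{B_{\theta r}(x)}\big|\eta'-(\eta')_A\big|^2.
\end{align*}
On $A$ the average is $<2\theta$ by the previous step, while on $B_{\theta r}(x)$ I only need the crude bound $|\eta'(z)-(\eta')_A|\le|\eta'(z)|+|(\eta')_A|\le2$, since $|(\eta')_A|\le\mvint_A|\eta'|=1$. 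This yields $\mvint_{B_r(x)}|\eta'-(\eta')_{B_r(x)}|^2<(1-\theta)\,2\theta+4\theta<8\theta$ (in fact $<6\theta$), which is \eqref{eq:bmo1}.

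For \eqref{eq:bmo2}–\eqref{eq:bmo3} I invoke $|\eta'|\equiv1$ once more: expanding the square gives $\mvint_{B_r(x)}|\eta'-(\eta')_{B_r(x)}|^2=1-|(\eta')_{B_r(x)}|^2$, so \eqref{eq:bmo1} together with the extra hypothesis $\theta<\tfrac18$ forces $|(\eta')_{B_r(x)}|^2>1-8\theta>0$; hence $\nu$ from \eqref{eq:bmo3} is well defined and $\langle(\eta')_{B_r(x)},\nu\rangle_{\R^n}=|(\eta')_{B_r(x)}|$. Expanding again, $\mvint_{B_r(x)}|\eta'-\nu|^2=2\big(1-|(\eta')_{B_r(x)}|\big)\le2\big(1-|(\eta')_{B_r(x)}|^2\big)=2\,\mvint_{B_r(x)}|\eta'-(\eta')_{B_r(x)}|^2<16\theta<32\theta$, which is \eqref{eq:bmo2}. (Alternatively one may bound $|\eta'-\nu|^2\le2|\eta'-(\eta')_{B_r(x)}|^2+2|(\eta')_{B_r(x)}-\nu|^2$ with $|(\eta')_{B_r(x)}-\nu|=1-|(\eta')_{B_r(x)}|$ and estimate both terms via \eqref{eq:bmo1}, which also lands below $32\theta$.)

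I do not expect a genuine obstacle: this is a quantitative form of the embedding of $W^{3/2,2}$ into $\textnormal{BMO}$. The one point requiring care is that \eqref{eq:frac-seminorm-theta} controls $\eta'$ only on the annulus $A_{r,\theta}(x)$, while all three conclusions are averages over the full ball $B_r(x)$; this is exactly why one pays the additive $\theta$-term and why arclength parametrization ($|\eta'|\equiv1$) is used to control $\eta'$ on the uncontrolled inner ball $B_{\theta r}(x)$ for free. The rest is bookkeeping to keep the numerical constants below $8\theta$ and $32\theta$.
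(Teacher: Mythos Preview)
Your proof is correct and follows essentially the same route as the paper: bound the $L^2$-oscillation of $\eta'$ on the annulus via the seminorm hypothesis, transfer to $B_r(x)$ by splitting off the inner ball $B_{\theta r}(x)$ with the crude bound $|\eta'-(\eta')_A|\le 2$, and invoke the minimality of the mean to get \eqref{eq:bmo1}; then use $|\eta'|\equiv1$ to show $|(\eta')_{B_r(x)}|>0$ and derive \eqref{eq:bmo2}. Your use of the exact variance identity (instead of Jensen) and the direct expansion $\mvint_{B_r(x)}|\eta'-\nu|^2=2(1-|(\eta')_{B_r(x)}|)$ are minor streamlinings that in fact yield slightly sharper constants, but the argument is the same.
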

\begin{proof}
We start with  estimates for
the mean quadratic deviation on the annulus  $A:=
A_{r,\theta}(x)$
with \eqref{eq:measure-annulus}, Jensen's inequality, and \eqref{eq:frac-seminorm-theta}:
\begin{eqnarray}\label{eq:mean-deviation-annulus}
\textstyle\mvint_{A}|\eta'(z)\!-\!(\eta')_A|^2dz 
& \overset{\eqref{eq:measure-annulus}}{=} &
\textstyle\frac1{2r(1-\theta)}\int_A|\eta'(z)\!-\!\mvint_A\eta'(u)du|^2dz\notag
\overset{\eqref{eq:measure-annulus}}{\le}
\textstyle
\frac1{4r^2(1-\theta)^2}\int_A\int_A|\eta'(z)\!-\!\eta'(u)|^2dudz\notag\\
&\le &\textstyle
\frac1{(1-\theta)^2}\int_A\int_A\frac{|\eta'(z)-\eta'(u)|^2}{|z-u|^2}dudz
 \overset{\eqref{eq:frac-seminorm-theta}}{<} 
\textstyle\frac{\theta}{(1-\theta)^2} < 4\theta,
\end{eqnarray}
since $\theta\in (0,\frac12)$. With $|\eta'|=1$ a.e.\@ on $\R/\Z$ we have the rough
estimate $|\eta'(\cdot)-(\eta')_A|\le 2$ a.e., which implies for the mean deviation from
$(\eta')_A$ on the full ball $B:=B_r(x)$
\begin{eqnarray}\label{eq:deviation-annulus-ball}
\textstyle\mvint_B|\eta'(z)-(\eta')_A|^2dz & = &
\textstyle
\frac1{2r}\int_A|\eta'(z)-(\eta')_A|^2 dz + \frac1{2r}\int_{B_{\theta r}(x)}
|\eta'(z)-(\eta')_A|^2 dz\notag\\
& \overset{\eqref{eq:measure-annulus}}{=} &
\textstyle
\frac{2r(1-\theta)}{2r}\mvint_A
|\eta'(z)-(\eta')_A|^2 dz+\frac{2\theta r}{2r}\mvint_{B_{\theta r}(x)}
|\eta'(z)-(\eta')_A|^2 dz\notag\\
& \overset{\eqref{eq:mean-deviation-annulus}}{<} &
(1-\theta)4\theta + 4\theta < 8\theta.
\end{eqnarray}
This proves \eqref{eq:bmo1}
since the function $c\mapsto\mvint_B|\eta'(z)-c|^2dz$ is minimized
on $\R^n$ by the mean value $c^*:=(\eta')_B\in\R^n$.

If $\theta\in (0,\frac18)$, 
then take the integral mean of the inequality
$|(\eta')_B|\ge |\eta'(z)|-|\eta'(z)-(\eta')_B|$ over $B=B_r(x)$
to obtain with \eqref{eq:bmo1} and H\"older's inequality
$|(\eta')_B|>1-\sqrt{8\theta}>0$, so that the vector $\nu$ in
\eqref{eq:bmo3} is well-defined. With the help of 
$|b-\frac{b}{|b|}|^2=|\frac{b|b|-b}{|b|}|^2=||b|-1|^2$ for $b:=(\eta')_B\in\R^n\setminus
\{0\}$ we obtain 
\begin{align*}
\textstyle
\mvint_B|(\eta')_B-\nu|^2 dz& =
\textstyle
\mvint_B\big|1-|(\eta')_B|\big|^2 dz =\mvint_B\big||\eta'(z)|-|(\eta')_B|\big|^2 dz
\le 
\mvint_B|\eta'(z)-(\eta')_B|^2 dz,
&
\end{align*}
so that by means of \eqref{eq:bmo1}
\[
\textstyle
\mvint_B|\eta'(z)-\nu|^2dz \le 2\big\{\mvint_B|\eta'(z)-(\eta')_B|^2dz
+\mvint_B|(\eta')_B-\nu|^2dz\big\}\overset{\eqref{eq:bmo1}}{<} 2 
\{8\theta +8\theta\}=
32\theta.\qedhere
\]
\end{proof}
To find suitable subarcs to be replaced by straight segments on a given curve,
we pick up an idea of Semmes \cite{semmes_1991a} 
from his study of 
chord-arc surfaces with small constants. 

Define for a fixed parameter $x\in\R/\Z$ and constants $r\in (0,\frac12)$ and
$\theta\in (0,\frac1{20})$ the \emph{localized excess function} on the
ball $B:=B_r(x)$ as
\begin{equation}\label{eq:loc-excess}
e_B(z):=\big(  \eta'(z)-\nu \big)\chi_B(z)\quad\Fo z\in\R,
\end{equation}
where $\nu\equiv\nu_B=\nu_{B_r(x)}\in\S^{n-1}$ is the vector defined
in \eqref{eq:bmo3} of Lemma \ref{lem:quant-embed}, and $\chi_A$ denotes
the characteristic function of a set $A\subset\R/\Z$. Moreover, we consider the
Hardy--Littlewood maximal function of the excess,
\begin{equation}\label{eq:hardy-littlewood}
\big( Me_B\big)(y):=\sup_{\rho >0}\,\mvint_{B_\rho(y)}|e_B(z)|\,dz
\end{equation}
satisfying the weak-type inequality (see, e.g., \cite[Chapter 3, 
Theorem~1.1, p.~101]{stein-shakarchi_2005})
\begin{align*}
\big|\big\{y\in\R: (Me_B)(y)>t\big\} \big| & 
\textstyle
\le \frac3{t}\|e_B\|_{L^1(\R)}=
\frac3{t}\int_B|\eta'(z)-\nu|dz\\
&\textstyle
\le\frac{6r}t \big(\mvint_B |\eta'(z)-\nu|^2 dz\big)^{\frac12}
\overset{\eqref{eq:bmo2}}{<}\frac{6r}t \sqrt{32\theta}<36
\sqrt{\theta}\cdot\frac{r}t\quad
\Foa t>0,
\end{align*}
where we used H\"older's inequality and \eqref{eq:bmo2}. Choosing $t:=\theta^{\frac14}$
we obtain
\begin{align*}
\textstyle
\big|\big\{y\in\R:(Me_B)(y)\le\theta^{\frac14} \big\}\cap B_{\frac{r}8}(x\pm\frac{r}2)
\big|&\ge\textstyle
|B_{\frac{r}8}(x\pm \frac{r}2)|-\big|\big\{y\in\R: (Me_B)(y)>\theta^{\frac14}\big\} 
\big|\\
& >\frac{r}4\big(1-144\cdot\theta^{\frac14}\big) >0
\end{align*}
for 
\begin{equation}\label{eq:theta1}
0<\theta<\theta_1:=\frac1{144^4}<\frac18.
\end{equation}
Consequently, the sets
\begin{equation}\label{eq:G-set}
\textstyle
G_\pm(x,\theta,r):=\big\{y\in\R: (Me_{B_r(x)})(y)\le\theta^{\frac14}\big\}\cap
B_{\frac{r}8}(x\pm\frac{r}2)\subset B_r(x)
\end{equation}
of \emph{good} endpoints of replaceable subarcs near the two parameters $x\pm\frac{r}2$ 
have positive measure if $\theta\in (0,\theta_1)$. 
The next lemma clarifies the term ``good'' in this context, by stating
that difference quotients of the curve $\eta$ rooted
at points in the set $G_\pm(x,\theta,r)$
can be approximated well by the unit vector $\nu=\nu_{B_r(x)}\in\S^{n-1}.$
\begin{lemma}[Difference quotients close to $\nu$]
\label{lem:difference-quotients}
Assume that the arclength parametrized curve $\eta\in W^{\frac32,2}(\R/\Z,\R^n)$
satisfies
\begin{equation}\label{eq:fractional-seminorm-theta2}
\textstyle
\lfloor\eta' \rfloor^2_{\frac12,2,A}<\theta\quad\textnormal{for some
$x\in\R/\Z$, $\theta\in (0,\theta_1)$ and $r\in (0,\frac12)$,}
\end{equation}
where $\theta_1$ is defined in \eqref{eq:theta1}, and $A:=A_{r,\theta}(x)=
B_r(x)\setminus B_{\theta r}(x)$. Then one has for
$\nu:=\nu_{B_r(x)}$ defined in \eqref{eq:bmo3}
\begin{equation}\label{eq:difference-quotients}
1\ge\Big\langle\frac{\eta(y)-\eta(x_0)}{y-x_0},\nu\Big\rangle_{\R^n}\ge
1-2\theta^{\frac14}\quad\AND\quad
\Big|
\Pi_{(\R\nu)^\perp}\Big(\frac{\eta(y)-\eta(x_0)}{y-x_0}\Big)\Big|
\le 2\theta^\frac18
\end{equation}
for all $y\in B_r(x)$ and $x_0\in G_\pm(x,\theta,r)$ as defined
in \eqref{eq:G-set}. Here, $\Pi_V$ denotes the orthogonal projection of $\R^n$ onto
a linear subspace $V\subset\R^n$.
\end{lemma}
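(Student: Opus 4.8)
The plan is to exploit that, for an absolutely continuous arclength parametrized curve, the difference quotient $\tfrac{\eta(y)-\eta(x_0)}{y-x_0}$ is nothing but the integral mean of the unit tangent $\eta'$ over the parameter interval between $x_0$ and $y$, and then to compare this mean with the reference vector $\nu=\nu_{B_r(x)}$ furnished by Lemma~\ref{lem:quant-embed}. Throughout I would work in a fixed lift of $\R/\Z$ to $\R$; this is legitimate since $r<\tfrac12$, so that $B_r(x)$ is a genuine bounded interval of $\R$ on which $e_B=\eta'-\nu$, and on which the Hardy--Littlewood maximal function from \eqref{eq:hardy-littlewood} is the usual one-dimensional one. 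Assuming without loss of generality $x_0<y$, I set $I:=[x_0,y]$ and $\rho:=y-x_0=|I|$. Note that $I\subset B_r(x)$, because both endpoints lie in the interval $B_r(x)$ (indeed $x_0\in G_\pm(x,\theta,r)\subset B_r(x)$ by \eqref{eq:G-set}, and $y\in B_r(x)$ by hypothesis), and that $\nu$ is well defined by Lemma~\ref{lem:quant-embed} since $\theta<\theta_1<\tfrac18$.

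First I would record the single key estimate $|w-\nu|\le 2\theta^{1/4}$, where $w:=\tfrac{\eta(y)-\eta(x_0)}{y-x_0}=\mvint_I\eta'(u)\,du$. Indeed, $e_B=\eta'-\nu$ on $I\subset B=B_r(x)$, and $I\subset B_\rho(x_0)$ with $|I|=\rho=\tfrac12|B_\rho(x_0)|$, so that
\begin{equation*}
|w-\nu|=\Big|\mvint_I\big(\eta'-\nu\big)\Big|\le\mvint_I|e_B|\le\frac{|B_\rho(x_0)|}{|I|}\,\mvint_{B_\rho(x_0)}|e_B|\le 2\,\big(Me_B\big)(x_0)\le 2\theta^{1/4},
\end{equation*}
the last inequality being precisely the membership $x_0\in G_\pm(x,\theta,r)$ from \eqref{eq:G-set}. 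The factor $2$ is harmless and stems from $x_0$ being an endpoint, not the center, of the interval $I$.

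Both assertions of \eqref{eq:difference-quotients} then follow by elementary estimates, using in addition that $|\eta'|\equiv 1$ a.e.\ and hence $|w|\le\mvint_I|\eta'|=1$. For the first, $\langle w,\nu\rangle_{\R^n}\le|w|\,|\nu|\le 1$, whereas $\langle w,\nu\rangle_{\R^n}=1+\langle w-\nu,\nu\rangle_{\R^n}\ge 1-|w-\nu|\ge 1-2\theta^{1/4}$. For the second, since $\Pi_{(\R\nu)^\perp}$ is a $1$-Lipschitz linear map with $\Pi_{(\R\nu)^\perp}\nu=0$, I obtain $\big|\Pi_{(\R\nu)^\perp}(w)\big|=\big|\Pi_{(\R\nu)^\perp}(w-\nu)\big|\le|w-\nu|\le 2\theta^{1/4}\le 2\theta^{1/8}$, the last step using $\theta\in(0,1)$; equivalently one may read this off from Pythagoras, $\big|\Pi_{(\R\nu)^\perp}(w)\big|^2=|w|^2-\langle w,\nu\rangle^2\le 1-(1-2\theta^{1/4})^2\le 4\theta^{1/4}$, which is the form matching the stated exponent $\tfrac18$.

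In short, the proof is largely bookkeeping: the genuine analytic content — the BMO-type inequality \eqref{eq:bmo2} and the weak-type argument keeping the maximal excess $Me_B$ below $\theta^{1/4}$ on the sets $G_\pm(x,\theta,r)$ — is already in place before the statement. The only point I expect to need a little care is the reduction from $\R/\Z$ to the lift $\R$ together with the geometry of the covering step, namely checking that the parameter arc joining $x_0$ and $y$ is an interval contained in both $B_r(x)$ and $B_\rho(x_0)$, with measure ratio exactly $2$, so that the maximal function may legitimately be invoked at the endpoint $x_0$.
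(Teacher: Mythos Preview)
Your proof is correct and follows essentially the same route as the paper: both express the difference quotient as the integral mean of $\eta'$ over $[x_0,y]\subset B_r(x)$, bound its deviation from $\nu$ by $2(Me_B)(x_0)\le 2\theta^{1/4}$ via the defining property of $G_\pm(x,\theta,r)$, and then read off the two inequalities in \eqref{eq:difference-quotients}. The only cosmetic difference is that you first bound $|w-\nu|$ and then deduce both conclusions, whereas the paper bounds $1-\langle w,\nu\rangle$ directly and derives the projection estimate from Pythagoras; your organization is arguably a touch cleaner.
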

\begin{proof}
The very left inequality in \eqref{eq:difference-quotients} is obvious since $\|\eta'
\|_{L^\infty}=1$ by assumption, and $|\nu|=1$. The second inequality can be 
derived with the fundamental theorem of calculus as follows:
\begin{align*}
\textstyle
1-\big\langle\frac{\eta(y)-\eta(x_0)}{y-x_0},\nu\big\rangle_{\R^n} & =
\textstyle
\frac1{y-x_0}\int_{x_0}^y\langle \nu-\eta'(u),\nu\rangle_{\R^n}du\\
&
\textstyle
\le\frac1{|y-x_0|}\int_{[x_0,y]}|\nu-\eta'(u)|du=
\frac1{|y-x_0|}\int_{[x_0,y]}|e_{B_r(x)}(u)|du,
\end{align*}
since both $y$ and $x_0$ are contained in $B_r(x)$.
The last integral may be bounded from above by
$$
\textstyle
\frac1{|y-x_0|}\int_{B_{|y-x_0|}(x_0)}|e_{B_r(x)}(u)|du
=2\mvint_{B_{|y-x_0|}(x_0)}|e_{B_r(x)}(u)|du\le 2\big(Me_{B_r(x)}\big)(x_0)\le
2\theta^{\frac14},
$$
because $x_0\in G_\pm(x,\theta,r)$, which proves the first chain of
inequalities
in \eqref{eq:difference-quotients}. The last inequality
follows from the first via
$\textstyle\big|\Pi_{(\R\nu)^\perp}(\zeta)\big|^2=
\big|\zeta-\langle\zeta,\nu\rangle\nu\big|^2=
|\zeta|^2-\langle\zeta,\nu\rangle^2\le 1-\langle\zeta,\nu\rangle^2=
(1+\langle\zeta,\nu\rangle)(1-\langle\zeta,\nu\rangle)\le 
2(1-\langle\zeta,\nu\rangle)
$ for all $\zeta\in\overline{B_1(0)}\subset\R^n$
(where we abbreviated 
$\langle\cdot,\cdot\rangle_{\R^n}$ by $\langle\cdot,\cdot\rangle$).
\end{proof}
Given a curve $\eta:\R/\Z\to\R^n$ and $N$ distinct parameters $x_1,\ldots,x_N\in
\R/\Z$, as well as $\theta\in (0,\theta_1)$, 
\begin{equation}\label{eq:little-r}
0<r<\frac12\min\big\{|x_i-x_k|_{\R/\Z}: i\not= k,\,i,k\in\{1,\ldots,N\}\big\}\le \frac14,
\end{equation}
and good sets $G_\pm^ i:=G_\pm(x_i,\theta,r)$ as defined  in \eqref{eq:G-set} for
$i=1,\ldots,N$, containing the parameters $x^i_\pm\in G_\pm^i$,
we replace the $N$ subarcs $\eta([x_-^i,x_+^i])$ by straight segments to define
the \emph{modified curve}
\begin{equation}\label{eq:modified-curve}
\tilde{\eta}(z):=\begin{cases}
\eta(x_-^i)+\frac{z-x_-^i}{x_+^i-x_-^i}\big(\eta(x_+^i)-\eta(x_-^i)\big) & \Fo
z\in [x_-^i,x_+^i], i=1\ldots,N,\\
\eta(z) & \Fo z\in\R/\Z\setminus\bigcup_{i=1}^N[x_-^i,x_+^i].
\end{cases}
\end{equation}
We call this procedure a \emph{harmonic substitution} comparable to Courant's and 
Morrey's  method in the theory of classic two-dimensional minimal surfaces
\cite[p. 25]{courant_1977}, \cite[Theorem~4.3.2]{morrey_1966}. But in the present
context, even if $\eta$ were smooth, the modified curve $\tilde{\eta}$ may not
even be of class $W^{\frac32,2}$. Or, in terms of the M\"obius energy, $\emob(\tilde{\eta})$ may be infinite even if $\tilde{\eta}$ is embedded and if the original curve $\eta$
has finite energy. Nevertheless, $\tilde{\eta}$ is 
$L^\infty$-close to $\eta$, and
its distortion  quotient remains under control.
\begin{lemma}[Distortion under harmonic substitution]
\label{lem:harmonic-substitution}
Let $\eta\in W^{\frac32,2}(\R/\Z,\R^n)$ be an arclength parametrized curve satisfying
\begin{equation}\label{eq:fractional-seminorm-theta3}
\lfloor \eta'\rfloor^2_{\frac12,2,A_{r,\theta}(x_i)}<\theta
\end{equation}
for 
finitely 
many distinct parameters $x_1,\ldots,x_N\in\R/\Z$ with $r\in (0,\frac14)$
chosen as in \eqref{eq:little-r},
and $\theta\in (0,\theta_2)$ for
$\theta_2:=256^{-4}$, and let $\tilde{\eta}$ be the modified curve defined
by harmonic substitution as in \eqref{eq:modified-curve}. Then
\begin{align}
\|\eta-\tilde{\eta}\|_{L^\infty}&< 6\theta^{\frac18}r\quad\AND\quad
\label{eq:linfty-dist-theta}\\
\max\Big\{\sup_{y,z\in [x_i-r,x_+^i]\atop y\not= z}{\textstyle\frac{d_{\tilde{\eta}}(\tilde{\eta}(y),\tilde{\eta}(z))}{|\tilde{\eta}(y)-
\tilde{\eta}(z)|},}\sup_{y,z\in [x_-^i,x_i+r]\atop y\not= z}
{\textstyle\frac{d_{\tilde{\eta}}(\tilde{\eta}(y),\tilde{\eta}(z))}{|\tilde{\eta}(y)-
\tilde{\eta}(z)|}}\Big\}&<1+4\theta^\frac14\Foa i=1,\ldots,N.\notag
\end{align}
\end{lemma}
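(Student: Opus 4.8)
The plan is to prove the two estimates in \eqref{eq:linfty-dist-theta} separately, both resting on Lemma~\ref{lem:difference-quotients} applied at each of the $N$ parameters $x_i$ with the good endpoints $x_\pm^i\in G_\pm^i$. First I would record the basic geometry of a single replaced arc: since $x_-^i\in B_{r/8}(x_i-r/2)$ and $x_+^i\in B_{r/8}(x_i+r/2)$, the chord $\eta(x_+^i)-\eta(x_-^i)$ has a length $\ell_i:=|x_+^i-x_-^i|$ comparable to $r$ (namely $\tfrac34 r<\ell_i<\tfrac54 r$), and by the first chain of inequalities in \eqref{eq:difference-quotients} its direction $\tau_i:=(\eta(x_+^i)-\eta(x_-^i))/|\eta(x_+^i)-\eta(x_-^i)|$ satisfies $\langle\tau_i,\nu_i\rangle\ge 1-2\theta^{1/4}$, where $\nu_i:=\nu_{B_r(x_i)}$; in particular $|\eta(x_+^i)-\eta(x_-^i)|\ge(1-2\theta^{1/4})\ell_i$.

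For the $L^\infty$-bound: the modified curve differs from $\eta$ only on the intervals $[x_-^i,x_+^i]$, and on such an interval, for $z\in[x_-^i,x_+^i]$, I would write $\tilde\eta(z)-\eta(z)=\bigl(\eta(x_-^i)+\tfrac{z-x_-^i}{\ell_i}(\eta(x_+^i)-\eta(x_-^i))\bigr)-\eta(z)$ and compare both terms to the straight line through $\eta(x_-^i)$ in direction $\nu_i$. Using \eqref{eq:difference-quotients} for $x_0=x_-^i$ and $y=z$ (both in $B_r(x_i)$), the transverse component of $\eta(z)-\eta(x_-^i)$ is at most $2\theta^{1/8}|z-x_-^i|\le 2\theta^{1/8}\ell_i$, and its longitudinal component is within $2\theta^{1/4}|z-x_-^i|$ of $|z-x_-^i|$; the same estimates applied to $\eta(x_+^i)-\eta(x_-^i)$ control the interpolated point. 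Combining these (and $\ell_i<\tfrac54 r$) yields $\|\eta-\tilde\eta\|_{L^\infty}<6\theta^{1/8}r$ once $\theta<\theta_2$; this is essentially bookkeeping with the triangle inequality.

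For the distortion bound on, say, $[x_i-r,x_+^i]$ (the other interval is symmetric), fix $y<z$ in that interval. The numerator $d_{\tilde\eta}(\tilde\eta(y),\tilde\eta(z))$ is the intrinsic length along $\tilde\eta$ between the two points; since $\tilde\eta$ agrees with the arclength curve $\eta$ outside $[x_-^i,x_+^i]$ and is a straight segment of length $|\eta(x_+^i)-\eta(x_-^i)|\le\ell_i$ inside it, this length is at most $|z-y|$ — the substitution only shortens. So it suffices to bound $|z-y|/|\tilde\eta(z)-\tilde\eta(y)|$ from above by $1+4\theta^{1/4}$, i.e.\ to show $|\tilde\eta(z)-\tilde\eta(y)|\ge(1-4\theta^{1/4})|z-y|$ (then use $(1-t)^{-1}<1+4t/3$ for small $t$). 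The plan here is to project onto $\nu_i$: I claim $\langle\tilde\eta(z)-\tilde\eta(y),\nu_i\rangle\ge(1-c\theta^{1/4})|z-y|$. On the portion of $[y,z]$ lying in $[x_i-r,x_-^i]$ the curve is $\eta$ itself, and $\langle\eta'(u),\nu_i\rangle$ integrates, via \eqref{eq:difference-quotients} with $x_0=x_-^i$ run backwards (both endpoints in $B_r(x_i)$), to at least $(1-2\theta^{1/4})$ times the length of that portion; on the portion in $[x_-^i,x_+^i]$ the curve moves in direction $\tau_i$ with $\langle\tau_i,\nu_i\rangle\ge1-2\theta^{1/4}$ and speed $|\eta(x_+^i)-\eta(x_-^i)|/\ell_i\ge1-2\theta^{1/4}$, so it again contributes at least $(1-2\theta^{1/4})^2$ times that portion's parameter-length. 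Adding the two contributions gives $\langle\tilde\eta(z)-\tilde\eta(y),\nu_i\rangle\ge(1-2\theta^{1/4})^2|z-y|\ge(1-4\theta^{1/4})|z-y|$, hence $|\tilde\eta(z)-\tilde\eta(y)|\ge(1-4\theta^{1/4})|z-y|$, which is the desired estimate after taking reciprocals and absorbing constants into the choice $\theta<\theta_2=256^{-4}$.

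The main obstacle I anticipate is handling the ``mixed'' case cleanly — when $y$ lies on the unmodified part $[x_i-r,x_-^i]$ and $z$ lies strictly inside the replaced segment $[x_-^i,x_+^i]$ — since then $\tilde\eta$ is only Lipschitz (not $C^1$) across $x_-^i$ and one must track both the genuine-arclength piece and the slightly-too-slow, slightly-misaligned straight piece simultaneously; the cleanest route is the single scalar inequality $\langle\tilde\eta(z)-\tilde\eta(y),\nu_i\rangle\ge(1-2\theta^{1/4})^2|z-y|$ above, which treats both pieces uniformly and avoids any case distinction about where $z$ falls. One also has to be mildly careful that all invoked points genuinely lie in $B_r(x_i)$ so that Lemma~\ref{lem:difference-quotients} applies — this is why the intervals in the statement are $[x_i-r,x_+^i]$ and $[x_-^i,x_i+r]$ rather than anything larger — and that the hypothesis \eqref{eq:fractional-seminorm-theta3} with $\theta<\theta_2<\theta_1$ legitimizes the use of $\nu_{B_r(x_i)}$ and of \eqref{eq:G-set}.
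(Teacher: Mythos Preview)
Your treatment of the $L^\infty$-bound and of the ``mixed'' case (one point on the unmodified arc, one on the segment) is correct and close to the paper's argument; the paper's Case~II is exactly your projection-onto-$\nu_i$ computation, only it writes $\langle\tilde\eta(z)-\tilde\eta(x_-^i),\nu_i\rangle=\frac{z-x_-^i}{x_+^i-x_-^i}\langle\eta(x_+^i)-\eta(x_-^i),\nu_i\rangle\ge(1-2\theta^{1/4})(z-x_-^i)$ directly instead of separating speed and direction, which gives the cleaner factor $(1-2\theta^{1/4})$ rather than $(1-2\theta^{1/4})^2$. (With the squared factor your reciprocal just barely misses $1+4\theta^{1/4}$, but this is only bookkeeping.)

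There is, however, a genuine gap. Your projection argument does \emph{not} cover the case where both $y$ and $z$ lie in the unmodified arc $\mathcal A:=[x_i-r,x_-^i]$. Lemma~\ref{lem:difference-quotients} only controls difference quotients anchored at a good point $x_0\in G_\pm^i$; it says nothing about $\langle\eta(z)-\eta(y),\nu_i\rangle$ when neither endpoint is $x_-^i$. If you try to recover it by writing $\langle\eta(z)-\eta(y),\nu_i\rangle=\langle\eta(x_-^i)-\eta(y),\nu_i\rangle-\langle\eta(x_-^i)-\eta(z),\nu_i\rangle$ and bounding each term via~\eqref{eq:difference-quotients}, the subtraction yields only $\langle\eta(z)-\eta(y),\nu_i\rangle\ge(z-y)-2\theta^{1/4}(x_-^i-y)$, which is useless when $|z-y|\ll x_-^i-y$ (take $y$ near $x_i-r$ and $z=y+\epsilon$). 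So the claim ``$\langle\eta'(u),\nu_i\rangle$ integrates to at least $(1-2\theta^{1/4})$ times the length of that portion'' is unjustified precisely when the portion is $[y,z]\subsetneq[x_i-r,x_-^i]$.

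The paper closes this gap with a different tool: since every $\zeta\in\mathcal A$ satisfies $|\zeta-x_i|\ge|x_-^i-x_i|>\tfrac38 r>\theta r$, one has $\mathcal A\subset A_{r,\theta}(x_i)$, so the hypothesis~\eqref{eq:fractional-seminorm-theta3} gives $\lfloor\eta'\rfloor_{\frac12,2,\mathcal A}^2<\theta$, and Corollary~\ref{cor:seminorm-bilip} (the elementary bilipschitz estimate from the local seminorm) yields directly $|\eta(z)-\eta(y)|\ge\sqrt{1-\theta/2}\,|z-y|$, hence distortion $<(1-\theta/2)^{-1/2}<1+\theta^{1/4}$. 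Note that this uses the annular seminorm assumption in a way your argument never does---you only use it implicitly through Lemma~\ref{lem:quant-embed} to make $\nu_i$ well-defined.
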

\begin{proof}
Note that $0<\theta_2=256^{-4}<\theta_1<\frac18$
so that Lemmas \ref{lem:quant-embed}
and \ref{lem:difference-quotients} are applicable here. The first inequality
in \eqref{eq:difference-quotients} for $y:=x_+^i\in G_+^i\subset B_r(x_i)$ 
and $x_0:=
x_-^i\in G_-^i$ for an arbitrary fixed $i\in\{1,\ldots,N\}$ implies for
$\nu_i:=\nu_{B_r(x_i)}\in\S^{n-1}$ as in \eqref{eq:bmo3}, and
$\Delta_i:=\frac{\eta(x_+^i)-\eta(x_-^i)}{x_+^i-x_-^i}\in \overline{B_1(0)}
\subset\R^n$
\begin{equation}\label{eq:nuDelta}
|\nu_i-\Delta_i|^2=1+|\Delta_i|^2-2\langle\nu_i,\Delta_i\rangle_{\R^n}\le
2\big(1-\langle\nu_i,\Delta_i\rangle_{\R^n}\big)
\overset{\eqref{eq:difference-quotients}}{\le}
4\theta^\frac14.
\end{equation}
Therefore, by virtue of \eqref{eq:bmo2} in Lemma \ref{lem:quant-embed} for
$\nu:=\nu_i$, $x:=x_i$, and $z\in [x_-^i,x_+^i]\subset B_r(x_i)$,
\begin{eqnarray*}
|\eta(z)-\tilde{\eta}(z)|& 
= &\textstyle
|\eta(z)-\eta(x_-^i)+\tilde{\eta}(x_-^i)-\tilde{\eta}(z)|=\big|\int_{x_-^i}^z
\big(\eta'(u)-
\tilde{\eta}'(u)\big)du\big|\\
& = & 
\textstyle
 \big|\int_{x_-^i}^z\big(\eta'(u)-\Delta_i\big)du\big|\le
2r\mvint_{B_r(x_i)}\big(|\eta'(u)-\nu_i|+|\nu_i-\Delta_i|\big)du\\
& \le & 
\textstyle
 2r\big(\mvint_{B_r(x_i)}|\eta'(u)-\nu_i|^2 du\big)^\frac12+2r|\nu_i-\Delta_i|\\
 & \overset{\eqref{eq:bmo2},\eqref{eq:nuDelta}}{<} &
 \textstyle
 2r\big(\sqrt{32\theta}
 +\sqrt{4\theta^\frac14}\big)=2r\theta^\frac18\big(\sqrt{32}
 \theta^\frac38+2\big)<6\theta^\frac18 r,
\end{eqnarray*}
which proves the first inequality in \eqref{eq:linfty-dist-theta}, since
$z\in [x_-^i,x_+^i]$ was arbitrarily chosen as well as the index
$i\in\{1,\ldots,N\}$.

Before proving the second inequality in \eqref{eq:linfty-dist-theta} let us insert
an auxiliary result comparing the intrinsic distance on $\eta$ with that on 
$\tilde{\eta}$, which turns out to be useful later, too. As in some
proofs before, we
abbreviate the intrinsic distance $d_\eta(\eta(y),\eta(z))$ on $\eta$ as 
$d_\eta(y,z)$, analogously on the modified curve $\tilde{\eta}$.
\begin{lemma}[Comparison of intrinsic distances]
\label{lem:compare-intrinsic}
Under the assumptions of Lemma \ref{lem:harmonic-substitution} one has
\begin{equation}\label{eq:compare-intrinsic}
\frac78 d_\eta(y,z)<(1-2\theta^\frac18)d_\eta(y,z)\le d_{\tilde{\eta}}(y,z)\le d_\eta(y,z)\quad\Foa
y,z\in\R/\Z, y\ne z,
\end{equation}
and the length $\mathscr{L}(\tilde{\eta})$ of $\tilde{\eta}$ is contained in the
interval $[1-2\theta^\frac18 , 1]\subset [\frac78,1]$.
\end{lemma}
\begin{proof}
As in the previous proof we set $\Delta_i:=\frac{\eta(x_+^i)-\eta(x_-^i)}{x_+^i-x_-^i}
\in\overline{B_1(0)}\subset\R^n$ and $\nu_i:=\nu_{B_r(x_i)}\in\S^{n-1}$ 
as in \eqref{eq:bmo3},
for $i=1,\ldots,N$. Use \eqref{eq:nuDelta} to infer
$$
\big|1-|\Delta_i|\big|=\big||\nu_i|-|\Delta_i|\big|\le |\nu_i-\Delta_i|\overset{
\eqref{eq:nuDelta}}{\le}2\theta^\frac18,
$$
so that the tangent vector of the modified curve $\tilde{\eta}$ satisfies
\begin{equation}\label{eq:modified-speed}
|\tilde{\eta}'(z)|=
\begin{cases}
|\eta'(z)| =1 & \textnormal{f.a.e.\@ $z\in\R/\Z\setminus\bigcup_{i=1}^N(x_-^i,x_+^i),$}\\
|\Delta_i|\in [1-2\theta^\frac18,1] & \textnormal{for all $z\in (x_-^i,x_+^i),i=1,\ldots,
N.$}
\end{cases}
\end{equation}
Consequently, we can estimate the length of $\tilde{\eta}$ as
\begin{equation}\label{eq:modified-length}
\mathscr{L}(\tilde{\eta})=\textstyle
\int_0 ^1|\tilde{\eta}'(u)|du\in [1-2\theta^\frac18,1]\subset [\frac78,1].
\end{equation}
For the proof of \eqref{eq:compare-intrinsic} we may assume $0\le y<z<1$.
If $d_\eta(y,z)=\int_y^z|\eta'(u)|du$ then by  \eqref{eq:modified-speed},
$$
d_{\tilde{\eta}}(y,z)\le\textstyle
\int_y^z|\tilde{\eta}'(u)|du\overset{\eqref{eq:modified-speed}}{\le} d_\eta(y,z).
$$
If, on the other hand, $d_\eta(y,z)=1-\int_y^z|\eta'(u)|du=\int_z^{1+y}|\eta'(u)|du$,
then 
$$
\textstyle
d_{\tilde{\eta}}(y,z)\le\int_z^{1+y}|\tilde{\eta}'(u)|du
\overset{\eqref{eq:modified-speed}}{\le}d_\eta(y,z),
$$ 
which proves the right
inequality in \eqref{eq:compare-intrinsic}.

Similarly, if $d_{\tilde{\eta}}(y,z)=\int_y^z|\tilde{\eta}'(u)|du$, then
$$
\textstyle
d_{\tilde{\eta}}(y,z)\overset{\eqref{eq:modified-speed}}{\ge}(1-2\theta^\frac18)
|z-y|_{\R/\Z}\ge
(1-2\theta^\frac18)
d_\eta(y,z)>\frac78 d_\eta(y,z).
$$
If $d_{\tilde{\eta}}(y,z)=\mathscr{L}(\tilde{\eta})-\int_y^z|\tilde{\eta}'(u)|du=
\int_z^{1+y}|\tilde{\eta}'(u)|du$, then
\[
\textstyle
d_{\tilde{\eta}}(y,z)\overset{\eqref{eq:modified-speed}}{\ge}(1-2\theta^\frac18)
\int_z^{1+y}|\eta'(u)|du\ge (1-2\theta^\frac18)d_\eta(y,z)>
\frac78 d_\eta(y,z).\qedhere
\]
\end{proof}
Now we return to the

\noindent
{\it Proof of Lemma \ref{lem:harmonic-substitution}.}\,
It remains to establish the estimate for the distortion quotient in
\eqref{eq:linfty-dist-theta}. Fix $i\in\{1,\ldots,N\}$, abbreviate
$x:=x_i$, $x_\pm:=x_\pm^i$, and consider the intervals
$\mathcal{A}:=[x-r,x_-]$, $\mathcal{B}:=[x_-,x_+]$, and $\mathcal{C}:=[x_+,x+r]$.
If $y,z\in \mathcal{B}$ 
then $d_{\tilde{\eta}}(y,z)=|\tilde{\eta}(y)-\tilde{\eta}(z)|$ due to $r<\frac14$,
which implies the right inequality in \eqref{eq:linfty-dist-theta},
since
we are on the straight segment in that case. 

So, we distinguish two remaining cases:
\begin{enumerate}
\item[\rm I.] $(y,z)\in \mathcal{A}\times \mathcal{A},$ (or $(y,z)\in 
\mathcal{C}\times \mathcal{C}$),
\item[\rm II.]
$(y,z)\in \mathcal{A}\times \mathcal{B},$ (or $(y,z)\in 
\mathcal{B}\times \mathcal{C}$, or  $\mathcal{C}
\times \mathcal{B}$, or $\mathcal{B}\times \mathcal{A}$).
\end{enumerate}
It suffices in each case to treat the situation named first, the alternatives in brackets
can then be proved analogously.

\noindent
{\it Case I.}\,
For any $\zeta\in \mathcal{A}$ one has
\begin{align*}
|\zeta-x|&\textstyle
\ge |x_- - x|=|x_- -(x-\frac{r}2)+(x-\frac{r}2)-x|\ge\frac{r}2-|x_- -(x-\frac{r}2)|
 >\frac{r}2 -\frac{r}8=\frac38 r>\theta r,
\end{align*}
so that $\mathcal{A}
\subset A_{r,\theta}(x)=B_r(x)\setminus B_{\theta r}(x)$. Therefore, our
assumption \eqref{eq:fractional-seminorm-theta3} implies that we
can apply
Corollary 
\ref{cor:seminorm-bilip} to the interval  $I:=\mathcal{A}$, $\g:=\eta|_\mathcal{A}$,
to deduce from \eqref{eq:seminorm-bilip2} 
by definition \eqref{eq:modified-curve} of $\tilde{\eta}$
\begin{equation}\label{eq:dist-A}
\textstyle
\frac{d_{\tilde{\eta}}(y,z)}{|\tilde{\eta}(y)-\tilde{\eta}(z)|}
\overset{\eqref{eq:modified-curve}}{=}
\frac{d_{\eta}(y,z)}{|\eta(y)-\eta(z)|}
=\frac{|y-z|_{\R/\Z}}{|\eta(y)-\eta(z)|}
\overset{\eqref{eq:seminorm-bilip2}}{<}
\frac1{\sqrt{1-\frac{\theta}2}}<1+\theta^\frac14.
\end{equation}

\noindent
{\it Case II.}\,
For $(y,z)\in \mathcal{A}\times \mathcal{B}$ 
and $\nu:=\nu_{B_r(x)}\in\S^{n-1}$ as defined 
in \eqref{eq:bmo3} we infer from the definition \eqref{eq:modified-curve},
\eqref{eq:difference-quotients} of
Lemma \ref{lem:difference-quotients}, 
and \eqref{eq:compare-intrinsic} of Lemma
\ref{lem:compare-intrinsic}
\begin{align*}
\textstyle
|\tilde{\eta}(z)-\tilde{\eta}(y)|& 
\textstyle
\ge
\langle\tilde{\eta}(z)-\tilde{\eta}(y),\nu\rangle_{\R^n}=
\langle\tilde{\eta}(x_-)-\tilde{\eta}(y),\nu\rangle_{\R^n}+
\langle\tilde{\eta}(z)-\tilde{\eta}(x_-),\nu\rangle_{\R^n}\\
&\overset{\eqref{eq:modified-curve}}{=}\textstyle
\langle\tilde{\eta}(x_-)-\tilde{\eta}(y),\nu\rangle_{\R^n}+
\frac{z-x_-}{x_+-x_-}\langle\eta(x_+)-\eta(x_-),\nu\rangle_{\R^n}\\
&\overset{\eqref{eq:difference-quotients}}{\ge}
\textstyle
(1-2\theta^\frac14)\big[(x_--y)+(z-x_-)\big]=
(1-2\theta^\frac14)(z-y)\\
&\textstyle =(1-2\theta^\frac14)d_\eta(y,z)\overset{\eqref{eq:compare-intrinsic}}{\ge}(1-2\theta^\frac14)d_{\tilde{\eta}}(y,z),
\end{align*}
which implies
$$
\textstyle
\frac{d_{\tilde{\eta}}(y,z)}{|\tilde{\eta}(y)-\tilde{\eta}(z)|}
\le\frac1{1-2\theta^\frac14}=1+\frac{2\theta^\frac14}{1-2\theta^\frac14}<
1+4\theta^\frac14,
$$
because $0<\theta<\theta_2=256^{-4}.$
\end{proof}
Finally we show how to control the distortion quotient globally
under harmonic substitution, which may be reinterpreted as
controlling the bilipschitz constant of $\tilde{\eta}$ measured
in its intrinsic distance $d_{\tilde{\eta}}(\cdot,\cdot)$.
\begin{lemma}[Bilipschitz control under harmonic substitution]
\label{lem:bilip-control-harmonic}
Let $\eta\in W^{\frac32,2}(\R/\Z,\R^n)$ be an arclength parametrized
curve satisfying \eqref{eq:fractional-seminorm-theta3} for finitely
many distinct parameters $x_1,\ldots,x_N\in\R/\Z$ with
$r\in (0,\frac14)$ as in \eqref{eq:little-r} and $\theta\in (0,\theta_3]$,
where
\begin{equation}\label{eq:theta3}
\theta_3:=\frac1{4^{24}L^8}=\big(\frac1{64L}\big)^8,
\end{equation}
with $L\in [1,\infty)$ such that
\begin{equation}\label{eq:bilip-eta}
|\eta(y)-\eta(z)|\ge\frac1{L}|y-z|_{\R/\Z}\quad\Foa y,z\in\R/\Z.
\end{equation}
If $\tilde{\eta}$ is the modified curve obtained from $\eta$ by
harmonic substitution as in \eqref{eq:modified-curve}, then
there exists some number $\tilde{L}\in [L,2L]$ such that
\begin{equation}\label{eq:bilip-tilde-eta}
|\tilde{\eta}(y)-\tilde{\eta}(z)|\ge\frac1{\tilde{L}}d_{\tilde{\eta}}(y,z)
\quad\Foa y,z\in\R/\Z.
\end{equation}
\end{lemma}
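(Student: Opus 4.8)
The plan is to derive the global bound \eqref{eq:bilip-tilde-eta} by patching the \emph{local} distortion control near the substitution points (Lemma~\ref{lem:harmonic-substitution}) onto the \emph{global} bilipschitz hypothesis \eqref{eq:bilip-eta} for $\eta$, using the $L^\infty$-closeness $\|\eta-\tilde\eta\|_{L^\infty}<6\theta^{1/8}r$ and the intrinsic-distance comparison $d_{\tilde\eta}(y,z)\le d_\eta(y,z)=|y-z|_{\R/\Z}$ from Lemma~\ref{lem:compare-intrinsic}. I would simply take $\tilde L:=2L\in[L,2L]$ and verify $|\tilde\eta(y)-\tilde\eta(z)|\ge\tfrac1{2L}\,d_{\tilde\eta}(y,z)$ for all $y\ne z$ in $\R/\Z$. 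The only numerology required is that $\theta\le\theta_3=(64L)^{-8}$ forces $\theta^{1/8}\le\tfrac1{64L}$, whence $12\theta^{1/8}r\le\tfrac{3r}{16L}$ and $1+4\theta^{1/4}<2$; note also $\theta_3<\theta_2$, so Lemmas~\ref{lem:harmonic-substitution} and~\ref{lem:compare-intrinsic} indeed apply.

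First I would record the geometry of the substitution intervals $I_i:=[x_-^i,x_+^i]$. By \eqref{eq:G-set} the endpoints satisfy $x_\pm^i\in B_{r/8}(x_i\pm\tfrac r2)$, so $I_i\subset B_{5r/8}(x_i)$ and $|I_i|\ge\tfrac34 r$; and since $|x_i-x_k|_{\R/\Z}>2r$ for $i\ne k$ by \eqref{eq:little-r}, any two of the $I_i$ are separated by more than $\tfrac34 r$. Consequently every parameter arc of length $<\tfrac38 r$ meets at most one $I_i$, and if it meets $I_i$ then it is contained in $B_r(x_i)$ and, having length $<|I_i|$, cannot contain $I_i$.

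The core is then a dichotomy in $|y-z|_{\R/\Z}$. If $|y-z|_{\R/\Z}\ge\tfrac38 r$, I would estimate $|\tilde\eta(y)-\tilde\eta(z)|\ge|\eta(y)-\eta(z)|-2\|\eta-\tilde\eta\|_{L^\infty}>\tfrac1L|y-z|_{\R/\Z}-\tfrac{3r}{16L}\ge\tfrac1{2L}|y-z|_{\R/\Z}\ge\tfrac1{2L}d_{\tilde\eta}(y,z)$, the penultimate step holding because $|y-z|_{\R/\Z}\ge\tfrac38 r$. If instead $|y-z|_{\R/\Z}<\tfrac38 r$, let $[y,z]$ be the shorter parameter arc; then $d_{\tilde\eta}(y,z)$ is attained along $[y,z]$, since its $\tilde\eta$-length is below $\tfrac3{32}$ whereas $\mathscr L(\tilde\eta)\ge\tfrac78$. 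If $[y,z]$ avoids all $I_i$, then $\tilde\eta=\eta$ on $[y,z]$ (endpoints included) and $|\tilde\eta(y)-\tilde\eta(z)|=|\eta(y)-\eta(z)|\ge\tfrac1L|y-z|_{\R/\Z}\ge\tfrac1L d_{\tilde\eta}(y,z)$. Otherwise $[y,z]$ meets exactly one $I_i$, lies inside $B_r(x_i)$, and — being an interval of length $<|I_i|$ — fails to contain $I_i=[x_-^i,x_+^i]$; hence $[y,z]\subset[x_i-r,x_+^i]$ or $[y,z]\subset[x_-^i,x_i+r]$, and Lemma~\ref{lem:harmonic-substitution} yields $\tfrac{d_{\tilde\eta}(y,z)}{|\tilde\eta(y)-\tilde\eta(z)|}<1+4\theta^{1/4}<2\le 2L$. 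In every case $|\tilde\eta(y)-\tilde\eta(z)|\ge\tfrac1{2L}d_{\tilde\eta}(y,z)$, which is \eqref{eq:bilip-tilde-eta} with $\tilde L=2L$.

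The hard part, I expect, is the geometric bookkeeping of the second branch: one must choose the thresholds ($\tfrac38 r$ for "close", $\tfrac58 r$ for the location of the $I_i$, $\tfrac34 r$ for their length and mutual separation) consistently with the separation \eqref{eq:little-r} of the $x_i$ and the location of the $G_\pm^i$, so that a short arc interacts with at most one $I_i$, stays in a single ball $B_r(x_i)$, and — crucially — cannot straddle all of $I_i$; only then does exactly one of the two one-sided intervals in the conclusion of Lemma~\ref{lem:harmonic-substitution} contain it. The rest is purely numerical: $\theta\le\theta_3$ must be small enough to absorb both the $L^\infty$-error $6\theta^{1/8}r$ and the local distortion defect $4\theta^{1/4}$ into the slack between $\tilde L=2L$ and $L$.
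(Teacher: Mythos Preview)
Your argument is correct. You use the same three ingredients as the paper---the local distortion bound of Lemma~\ref{lem:harmonic-substitution}, the global bilipschitz hypothesis~\eqref{eq:bilip-eta}, and the $L^\infty$-closeness $\|\eta-\tilde\eta\|_{L^\infty}<6\theta^{1/8}r$ together with Lemma~\ref{lem:compare-intrinsic}---but organize the case split differently. The paper argues by \emph{location}: (I)~both $y,z$ in one of the half-intervals $[x_i-r,x_+^i]$ or $[x_-^i,x_i+r]$; (II)~both outside $\bigcup_i[x_-^i,x_+^i]$; (III)~one inside some $[x_-^i,x_+^i]$ and the other outside $[x_i-r,x_i+r]$, whence $|y-z|_{\R/\Z}>\tfrac38 r$. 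You argue by \emph{distance}: $|y-z|_{\R/\Z}\ge\tfrac38 r$ versus $<\tfrac38 r$, and in the near case your geometric bookkeeping (the arc meets at most one $I_i$, stays in $B_r(x_i)$, and is too short to contain $I_i$) reduces exactly to the paper's Case~I or Case~II. Your dichotomy is a bit cleaner in that the $L^\infty$ error is absorbed uniformly for all far pairs, not just those with one endpoint in an $I_i$; the paper's trichotomy, on the other hand, yields the sharper constant $1/L$ (rather than $1/(2L)$) in Case~II, though this is immaterial for the stated conclusion $\tilde L\in[L,2L]$.
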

\begin{proof}
Note that the smallness assumption \eqref{eq:theta3} implies
$0<\theta\le\theta_3<\theta_2=256^{-4}<\theta_1=144^{-4},$ so that
all previous lemmas in the present section are applicable. We
distinguish three (partly overlapping) cases:
\begin{enumerate}
\item[\rm I.] $y,z\in [x_i-r,x_+^i]$ (or $y,z\in [x^i_-,x_i+r]$) for some 
$i\in\{1,\ldots,N\}$,
\item[\rm II.] $y,z\in\R/\Z\setminus \bigcup_{i=1}^N[x_-^i,x_+^i],$
\item[\rm III.] $y\in [x_-^i,x_+^i]$, $z\in\R/\Z\setminus
[x_i-r,x_i+r]$ for some $i\in\{1,\ldots,N\}$.
\end{enumerate}
\noindent
{\it Case I.}\,
We apply \eqref{eq:linfty-dist-theta} of Lemma 
\ref{lem:harmonic-substitution} to deduce for $d_{\tilde{\eta}}(y,z)\equiv
d_{\tilde{\eta}}(\tilde{\eta}(y),\tilde{\eta}(z))$
\begin{equation}\label{eq:bilip-tilde1}
\textstyle
\frac{1024}{1025}d_{\tilde{\eta}}(y,z)\le
\frac{d_{\tilde{\eta}}(y,z)}{1+4\theta^\frac14}\overset{\eqref{eq:linfty-dist-theta}}{<}|\tilde{\eta}(y)-\tilde{\eta}(z)|\quad\Foa y,z\in [x_i-r,x_+^i]
(\textnormal{or $[x_-^i,x_i+r]$}).
\end{equation}
\noindent
{\it Case II.}\,
For $y,z\in\R/\Z\setminus \bigcup_{i=1}^N[x_-^i,x_+^i]$ we have by 
means of \eqref{eq:compare-intrinsic} $d_{\tilde{\eta}}(y,z)
\le d_\eta(y,z)$ and $|\tilde{\eta}(y)-\tilde{\eta}(z)|=|\eta(y)-\eta(z)|$,
so that by assumption \eqref{eq:bilip-eta} 
\begin{equation}\label{eq:bilip-tilde2}
\textstyle
\frac1{L}d_{\tilde{\eta}}(y,z)\le
\frac1{L}d_{\eta}(y,z)=\frac1{L}|y-z|_{\R/\Z}
\overset{\eqref{eq:bilip-eta}}{\le}
|\eta(y)-\eta(z)|=
|\tilde{\eta}(y)-\tilde{\eta}(z)|.
\end{equation}
\noindent
{\it Case III.}\,
For $y\in [x_-^i,x_+^i]$ and $z\notin [x_i-r,x_i+r]$ we find by
\eqref{eq:compare-intrinsic}, the first inequality in
\eqref{eq:linfty-dist-theta} of Lemma \ref{lem:harmonic-substitution},
and
assumptions \eqref{eq:bilip-eta} and \eqref{eq:theta3}
\begin{align}
\textstyle\frac{d_{\tilde{\eta}}(y,z)}{|\tilde{\eta}(y)-\tilde{\eta}(z)|}&
\textstyle
\stackrel{\eqref{eq:linfty-dist-theta}, \eqref{eq:compare-intrinsic}}{<}
\frac{d_{\eta}(y,z)}{|\eta(y)-\eta(z)|-12\theta^\frac18 r}=\big(
\frac{|\eta(y)-\eta(z)|}{|y-z|_{\R/\Z}}-\frac{12\theta^\frac18
r}{
|y-z|_{\R/\Z}}\big)^{-1}\overset{\eqref{eq:bilip-eta}}{<}
\big(\frac1{L}-\frac{12\theta^\frac18}{3/8}\big)^{-1}
\overset{\eqref{eq:theta3}}{\le}2L,
\end{align}
since $|y-z|_{\R/\Z}\ge |z-x|_{\R/\Z}-\max\{|x-x_-^i|,
|x-x_+^i|\}
>r-\frac58 r=\frac38 r.$
\end{proof}

\section{Weak fractional compactness}\label{sec:compactness}
From the introduction we recall  
Definition~\eqref{eq:symmetric-subset} of
the $p$-rotationally symmetric subset $\Sigma_p(\mathcal{K})$ of a given
knot equivalence class $\mathcal{K}$ for $p\in\N\setminus\{1\}$. 
Consider now for given constants
$L\ge 1$ and $R>0$ the subset
\begin{equation}\label{eq:compact-subset}
\mathcal{F}_{p,L,R}(\mathcal{K}):=
\{\eta\in\Sigma_p(\mathcal{K})\!\cap\! W^{\frac32,2}(\R/\Z,\R^3):
|\eta'|\!=\!1\,\textnormal{a.e.},\,\bilip(\eta)\!\ge\!
\textstyle\frac1{L},\,\max\{|\eta(0)|,\lfloor\eta'
\rfloor^2_{\frac12,2}\}\!\le\! R\},
\end{equation}
where $\bilip(\eta)$ denotes the bilipschitz constant defined in
\eqref{eq:def-bilip} in Section~\ref{sec:stability}. Then
we can prove the following compactness result which may be of
independent interest in geometric knot theory.
\begin{theorem}[Weak fractional compactness]
\label{thm:fractional-compactness}
For any tame prime knot equivalence class  $\mathcal{K}$ the set
$\mathcal{F}_{p,L,R}(\mathcal{K})$ is weakly sequentially compact
in $W^{\frac32,2}(\R/\Z,\R^3)$ in the restricted sense that for any sequence
$(\g_n)_n\subset \mathcal{F}_{p,L,R}(\mathcal{K})\cap C^1(\R/\Z,\R^3)$ 
there is a knot
$\g\in\mathcal{F}_{p,L,R}(\mathcal{K})$ and a 
subsequence $\g_{n_k}\rightharpoonup \g$ in $W^{\frac32,2}$ as $k\to\infty$.
\end{theorem}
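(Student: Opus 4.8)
The plan is to first extract a weak limit by routine compactness and verify that every defining property of $\mathcal{F}_{p,L,R}(\mathcal{K})$ \emph{except the knot class} passes to it, and then to settle the knot class with the harmonic–substitution machinery of Section~\ref{sec:harmonic} together with Corollary~\ref{cor:equiv-knots}. Since each $\g_n$ is arclength parametrized, $\|\g_n'\|_{L^\infty}=1$, and with $\lfloor\g_n'\rfloor^2_{\frac12,2}\le R$ and $|\g_n(0)|\le R$ the sequence is bounded in the Hilbert space $W^{\frac32,2}(\R/\Z,\R^3)$; passing to a subsequence gives $\g_{n_k}\rightharpoonup\g$ in $W^{\frac32,2}$, and by the compact embeddings $W^{\frac32,2}\hookrightarrow C^{0,\alpha}$ ($\alpha<1$) and $W^{\frac12,2}\hookrightarrow L^2$ we may also assume $\g_{n_k}\to\g$ in $C^0$ and $\g_{n_k}'\to\g'$ in $L^2$ and a.e. Hence $|\g'|=1$ a.e., $|\g(0)|\le R$ and the $p$-rotational symmetry~\eqref{eq:rot-symm} hold for $\g$; $\lfloor\g'\rfloor^2_{\frac12,2}\le R$ follows from weak lower semicontinuity of the Gagliardo seminorm, and the inequality $|\g(u)-\g(v)|\ge\frac1L|u-v|_{\R/\Z}$ survives the uniform limit, so $\bilip(\g)\ge\frac1L$. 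In particular $\g$ is an embedded arclength $W^{\frac32,2}$-curve, hence of finite M\"obius energy by~\cite{blatt_2012a} and therefore a tame knot by~\cite[Theorem~4.1]{freedman-etal_1994}. It remains to prove $[\g]=\mathcal{K}$.

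As in the proof of Corollary~\ref{cor:fractional-stability}, absolute continuity of the double integral defining $\lfloor\g'\rfloor^2_{\frac12,2}$ together with Corollary~\ref{cor:seminorm-bilip} gives a scale $r_\g>0$ with $\delta(\g,r_\g)<g_3$. Each $\g_{n_k}$ admits such a scale as well, but \emph{not} uniformly, so $\|\g_{n_k}-\g\|_{L^\infty}$ may decay too slowly to apply Corollary~\ref{cor:equiv-knots} to $\g_{n_k}$ directly. To remedy this, form the Radon measures $\mu_k$ on $(\R/\Z)^2$ with Lebesgue densities $\frac{|\g_{n_k}'(u)-\g_{n_k}'(v)|^2}{|u-v|^2}$, of total mass $\le R$, and extract a weak-$*$ limit $\mu$. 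Off the diagonal the densities are uniformly bounded, so $\mu$ can concentrate only on diagonal atoms; fix $\theta\in(0,\theta_3]$ as in~\eqref{eq:theta3} (also $<\tfrac1{20}$) and let $x_1,\dots,x_N\in\R/\Z$ be the finitely many points with $\mu(\{(x_i,x_i)\})\ge\theta$. Using continuity from above of $\mu$ at each $(x_i,x_i)$, a covering argument on the compact complement of small balls about the $x_i$, and genericity of radii, one produces a single scale $r\in(0,\tfrac14)$ and an index $k_0$ such that for all $k\ge k_0$ one has $\lfloor\g_{n_k}'\rfloor^2_{\frac12,2,A_{r,\theta}(x_i)}<\theta$ for $i=1,\dots,N$, and $\lfloor\g_{n_k}'\rfloor^2_{\frac12,2,B_\rho(x)}$ arbitrarily small, uniformly in $k$ and in $x\in\R/\Z\setminus\bigcup_iB_r(x_i)$, for a suitable $\rho=\rho(r)\in(0,r)$.

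Now the machinery of Section~\ref{sec:harmonic} applies uniformly in $k\ge k_0$. Choosing good endpoints $x_\pm^i\in G_\pm(x_i,\theta,r)$ as in~\eqref{eq:G-set}, let $\tilde\g_{n_k}$ be the harmonic substitution~\eqref{eq:modified-curve} of $\g_{n_k}$ on the $N$ subarcs $[x_-^i,x_+^i]$. By Lemma~\ref{lem:bilip-control-harmonic} the $\tilde\g_{n_k}$ are uniformly bilipschitz with constant $\le 2L$; combined with the local distortion bound $<1+4\theta^{\frac14}$ near the substituted arcs (Lemma~\ref{lem:harmonic-substitution}) and with Corollary~\ref{cor:seminorm-bilip} on the remaining small-seminorm pieces, this yields a uniform scale $r^*>0$ with $\delta(\tilde\g_{n_k},r^*)<g_3$ for all $k\ge k_0$: at a sufficiently small scale $r^*$ the bilipschitz bound forces every pair $(y,z)$ with $|\tilde\g_{n_k}(y)-\tilde\g_{n_k}(z)|\le 2r^*$ to be ``local'', so one of the two local estimates applies. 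Moreover $\|\tilde\g_{n_k}-\g_{n_k}\|_{L^\infty}<6\theta^{\frac18}r$ by Lemma~\ref{lem:harmonic-substitution}, whence $\tilde\g_{n_k}\to\g$ in $C^0$ and $\dist_{\HH}(\tilde\g_{n_k}(\R/\Z),\g(\R/\Z))<\tfrac14\min\{r_\g,r^*\}$ for $k$ large, so Corollary~\ref{cor:equiv-knots} gives $\tilde\g_{n_k}\sim\g$. Finally, by Lemma~\ref{lem:difference-quotients} and the $L^\infty$-estimate, each substituted subarc $\g_{n_k}([x_-^i,x_+^i])$ lies in a thin tube about the chord $\tilde\g_{n_k}([x_-^i,x_+^i])$ and is nearly monotone in the direction $\nu_i$, hence isotopic rel endpoints, inside that tube, to the chord; since $\theta$ is small relative to $L$, the $L^\infty$-bound and $\bilip(\g_{n_k})\ge\frac1L$ keep the $N$ tubes pairwise disjoint and disjoint from the rest of $\g_{n_k}(\R/\Z)$, so the substitution is realized by an ambient isotopy of the $C^1$-knot $\g_{n_k}$. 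Therefore $[\g]=[\tilde\g_{n_k}]=[\g_{n_k}]=\mathcal{K}$, and $\g\in\mathcal{F}_{p,L,R}(\mathcal{K})$.

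The main obstacle is the joint choice of scales in the second paragraph: producing \emph{one} scale $r$ (and the derived $\rho$, $r^*$) that simultaneously cuts out all diagonal concentration of the $\mu_k$ on the annuli $A_{r,\theta}(x_i)$ for every large $k$, leaves the seminorm uniformly small away from the $x_i$, and respects the thresholds $\theta_1,\theta_2,\theta_3$ of Section~\ref{sec:harmonic}, all while keeping the final bound $\delta(\tilde\g_{n_k},r^*)<g_3$ genuinely uniform in $k$ so that Corollary~\ref{cor:equiv-knots} can be invoked once and for all. The knot-preservation step --- making the ``thin tube plus near-monotonicity forces the subarc to be unknotted, with supporting isotopy disjoint from the rest of the curve'' argument quantitative --- is conceptually clear but also requires care.
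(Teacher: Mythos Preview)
Your overall architecture---weak limit, local distortion of $\g$, measures $\mu_k\rightharpoonup\mu$, concentration points, harmonic substitution, then Corollary~\ref{cor:equiv-knots}---matches the paper's Steps~1--6. The serious gap is in your knot-preservation argument, i.e.\ the claim that $[\tilde\g_{n_k}]=[\g_{n_k}]$.

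You assert that the subarc $\g_{n_k}([x_-^i,x_+^i])$ lies in a thin tube about the chord and is ``nearly monotone in the direction $\nu_i$'', and conclude it is isotopic rel endpoints to the chord. But Lemma~\ref{lem:difference-quotients} only controls \emph{difference quotients rooted at the good endpoints $x_\pm^i$}, not the tangent $\g_{n_k}'$. Inside the concentration interval $B_{\theta r}(x_i)$---precisely where assumption~\eqref{eq:fractional-seminorm-theta3} says nothing---the arclength curve can execute a small trefoil of diameter $O(\theta r)$ without violating~\eqref{eq:difference-quotients}: the $\nu$-displacement accumulated on the annulus is $\sim r/2$, while the trefoil costs at most $2\theta r$ of $\nu$-progress, and $2\theta\ll\theta^{1/4}$. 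So the projection onto $\R\nu$ need not be injective, the arc need not be a graph, and your ``thin tube plus near-monotonicity'' does \emph{not} force unknottedness. Notice you never invoke the hypothesis that $\mathcal{K}$ is \emph{prime}; that is the tell.

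The paper closes this gap differently (Step~5). First, the $p$-rotational symmetry of each $\g_n$ forces $V_\varepsilon$ to be invariant under the shift $\xi\mapsto\xi+\tfrac1p$, so concentration points come in packets of $p\ge2$. Hence if \emph{one} subarc $\g_{n_0}([x_-^i,x_+^i])$ (trivially extended) were non-trivially knotted, there would be at least two such, making $\g_{n_0}$ a connected sum of non-trivial knots---a composite knot---contradicting $[\g_{n_0}]=\mathcal{K}$ prime. Thus all subarcs are unknotted and the straight-segment replacement preserves the knot class. Two smaller points: the paper only needs a \emph{single} index $n_0$ (not a uniform statement for all $k\ge k_0$), which simplifies Step~4 considerably; and tameness of $\tilde\g_{n_0}$ is established via the Crowell--Fox necklace-of-double-cones construction, using the $C^1$ hypothesis on $\g_{n_0}$ together with the small-angle estimate~\eqref{eq:small-beta-angle} at the grafting points.
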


Strictly speaking, the tameness requirement is superfluous since
$\mathcal{F}_{p,L,R}(\mathcal{K})=\emptyset$ if $\mathcal K$ is wild
as pointed out at the end of the proof of Corollary \ref{cor:fractional-stability}.
The additional smoothness assumption on the sequence $(\g_n)_n$ guarantees 
that sequence members remain tame after the harmonic substitution described
in Section~\ref{sec:harmonic}. This assumption might 
be of purely technical
nature and can possibly be omitted.

For our application 
in search of symmetric minimizing knots for the M\"obius energy
in Section~\ref{sec:symm-critical}, however, a mollification argument
allows us to focus on smooth minimizing sequences satisfying all assumptions
of Theorem~\ref{thm:fractional-compactness} above.

\noindent
{\it Proof of Theorem~\ref{thm:fractional-compactness}.}\,
We proceed in several steps.

{\it Step 1. Subconvergence to a candidate curve $\g$.}\,
Since $|\g_n'|=1$ a.e.\@ on $\R/\Z$ and $|\g_n(t)|\le
|\g_n(t)-\g_n(0)|+R\le 1+R$ for a.e.\@ $t\in\R/\Z$ one finds a constant
$C=C(R)$ independent of $n$ such that
\begin{equation}\label{eq:w3halbe-bound}
\|\g_n\|_{W^{\frac32,2}}\le C\quad\Foa n\in\N.
\end{equation}
The space $W^{\frac32,2}(\R/\Z,\R^3)$ is reflexive so that we may
assume that there is some curve $\g\in W^{\frac32,2}
(\R/\Z,\R^3)$ such that (up to a subsequence)
\begin{equation}\label{eq:weak-subconvergence}
\g_n\rightharpoonup \g \quad\textnormal{as $n\to\infty$.}
\end{equation}
By the Poincar\'e inequality (cf.~\cite[Proposition A.3]{knappmann-etal_2022}) the seminorm $\lfloor\cdot
\rfloor_{\frac12,2}$ is a norm on the subspace $\{f\in 
W^{\frac12,2}(\R/\Z,\R^3):\int_{\R/\Z}f(u)du=0\}$ 
equivalent to $\|\cdot\|_{W^{\frac12,2}}$. Since $
\int_{\R/\Z}\eta'(u)du=0$
for all $\eta\in W^{\frac32,2}(\R/\Z,\R^3)$ we therefore have by 
\eqref{eq:weak-subconvergence},
\begin{equation}\label{eq:uhs-seminorm}
\lfloor\g'\rfloor_{\frac12,2}\le\liminf_{n\to\infty}\lfloor\g_n'
\rfloor_{\frac12,2}\le \sqrt{R}.
\end{equation}
Moreover, due to the compact embeddings $W^{\frac32,2}\hookrightarrow
C^0$ and $W^{\frac32,2}\hookrightarrow
W^{1,1}$ and the theorem of Fischer--Riesz, we find (up to a further
subsequence)
\begin{equation}\label{eq:unif-subconvergence}
\g_n\to\g\quad\textnormal{in $C^0$, \quad and}\quad
\g_n'\to\g'\quad\textnormal{a.e.\@ on $\R/\Z$\quad as $n\to\infty$.}
\end{equation}
This implies $|\g(0)|=\lim_{n\to\infty}|\g_n(0)|\le R$ and
$|\g'|=\lim_{n\to\infty}|\g_n'|=1$ a.e.\@ on $\R/\Z$. The uniform
convergence in \eqref{eq:unif-subconvergence} leads to the bilipschitz
estimate
$$
\textstyle
|\g(u)-\g(v)|=\lim_{n\to\infty}
|\g_n(u)-\g_n(v)|\ge\frac1{L}|u-v|_{\R/\Z}\quad\Foa u,v\in\R/\Z;
$$
hence 
\begin{equation}\label{eq:bilip-gamma}
\textstyle
\bilip(\g)\ge\frac1{L},
\end{equation}
and $\g$ is embedded. 
Also the symmetry relation \eqref{eq:rot-symm} is preserved in the uniform
limit
since
$$
\textstyle
\g(s)=\lim_{n\to\infty}\g_n(s)\overset{\eqref{eq:rot-symm}}{=}
R_{\frac{2\pi}p}\lim_{n\to\infty}\g_n(s-\frac1{p})=
R_{\frac{2\pi}p}\g(s-\frac1{p})\quad\Foa s\in\R/\Z.
$$
So, the only requirement on the limiting knot $\g$ to be checked, 
is that $\g$ represents the prescribed knot equivalence
class, i.e., that $[\g]=\mathcal{K}$. 

{\it Step 2. Controlling the local distortion of the limiting  knot $\g$.}\,
Combining \eqref{eq:seminorm-bilip2} in Corollary \ref{cor:seminorm-bilip} 
with the absolute
continuity of the double integral defining the seminorm
 $\lfloor\g'\rfloor_{\frac12,2}\le$ $\sqrt{R}$ yields a number $\rho_\g\in
 (0,\frac14)$ such that for $d_\g(s,t)\equiv d_\g(\g(s),\g(t))=|s-t|_{\R/\Z}$
 \begin{equation}\label{eq:dist-limit}
\textstyle
\sup\big\{\frac{d_\g(s,t)}{|\g(s)-\g(t)|}:s,t\in\R/\Z,\,0<|s-t|_{\R/\Z}=
|s-t|\le 2\rho_\g\big\}<g_3.
\end{equation}
For parameters $s,t\in\R/\Z$ with $d_\g(s,t)>2\rho_\g$, on the
other hand, we infer from \eqref{eq:bilip-gamma}
\[ 2\rho_\g<d_\g(s,t)=|s-t|_{\R/\Z}\le L|\g(s)-\g(t)|. \]
This together
with \eqref{eq:dist-limit} leads to 
\begin{equation}\label{eq:local-distortion-limit}
\textstyle
\delta(\g,r_\g)<g_3\quad\textnormal{at scale
$r_\g:=\frac{\rho_\g}L.$}
\end{equation}
Suppose we had $\delta(\g_m,r_{\g_m})<g_3$ for  some $m\in\N$ at some
scale $r_{\g_m}>0$, then we could conclude the knot equivalence $
\mathcal{K}=[\g_m]=[\g]$ from Corollary \ref{cor:equiv-knots},
if, in addition, $\|\g_m-\g\|_{L^\infty}<\frac14\min\{r_\g,r_{\g_m}\}$.
But as indicated in Section~\ref{sec:strategy} of the introduction
this might be too much to hope for because of possible 
concentrations in the seminorms $\lfloor \g_m'\rfloor_{\frac12,2}$
as $m\to\infty$. Therefore, we are going to carefully
cut out possible
concentrations of the curves $\g_n$,
 replace these subarcs by straight segments
to obtain modified curves $\tilde{\g}_n$, and use
the estimates of Section~\ref{sec:harmonic} to finally
find a sequence member $\tilde{\g}_m$ sufficiently 
$L^\infty$-close to $\g$, so that Corollary \ref{cor:equiv-knots}
indeed implies
$[\g]=[\tilde{\g}_m]=[\g_m]=\mathcal{K}$. 
The details are presented in the remaining steps. This is 
where the assumptions $\g_n\in C^1(\R/\Z,\R^3)$ and $\mathcal{K} $
prime come into play.

{\it Step 3. Localizing concentrations via weak convergence of measures.}\,
Fix
\begin{equation}\label{eq:fix-theta}
\textstyle
\theta\equiv \theta_4:=\big(\frac1{6\cdot 128L}\big)^8<\theta_3<\theta_2<
\theta_1<\frac1{20},
\end{equation}
such that we have Lemmas \ref{lem:quant-embed}, 
\ref{lem:difference-quotients}, \ref{lem:harmonic-substitution},
\ref{lem:compare-intrinsic}, and \ref{lem:bilip-control-harmonic}
of Section~\ref{sec:harmonic} at our disposal. In addition, set
\begin{equation}\label{eq:epsilon-choice}
\varepsilon:=\frac23-\frac6{\pi^2}>0
\end{equation}
and define the Radon measures\footnote{suitably extended to
an outer measure; see, e.g., \cite[pp. 13,14]{teschl_2025}.}
\begin{equation}\label{eq:def-radon}
\textstyle
\mu_n(A):=\iint_A\frac{|\g_n'(u)-\g_n'(v)|^2}{|u-v|^2}\,dudv
\quad
\text{for Lebesgue measurable}
A\subset\R/\Z\times\R/\Z,\,\,n\in\N.
\end{equation}
By means of the assumed uniform bound $\lfloor\g_n'\rfloor_{\frac12,2}\le
\sqrt R$ 
we find $\sup_{n\in\N}\mu_n(\R/\Z\times\R/\Z)\le R <\infty$,
so that there is a Radon measure $\mu$ on $\R/\Z\times\R/\Z$ such that
(up to a subsequence)
\begin{equation}\label{eq:weak-convergence}
\mu_n\rightharpoonup\mu\quad\As n\to\infty,
\end{equation}
cf.\@ \cite[Theorem~1.41]{evans-gariepy_2015}. (Note that the limiting
measure $\mu$ might be different from $\lfloor\g'\rfloor_{\frac12,2,\cdot}^{2}$.)
Since the mapping $r\mapsto\mu(B_r(x)\times B_r(y))$ is non-negative and 
non-decreasing for fixed parameters $x,y\in\R/\Z$, its limit
\begin{equation}\label{eq:limit-balls}
\textnormal{$\lim_{r\downarrow 0}\mu(B_r(x)\times B_r(y))$ exists and is 
non-negative,}
\end{equation}
so that we can consider the set
\begin{equation}\label{eq:W-eps}
W_\varepsilon:=\big\{ (x,y)\in\R/\Z\times\R/\Z:\lim_{r\downarrow 0}\mu\big(B_r(x)
\times B_r(y)\big)>\varepsilon\big\}.
\end{equation}
We claim that concentrations take place only on the diagonal, i.e.,
\begin{equation}\label{eq:diagonal}
W_\varepsilon=\bigcup_{x\in V_\varepsilon} \{x\}\times\{x\},
\end{equation}
where $V_\varepsilon:=\{x\in\R/\Z:\lim_{r\downarrow 0}\mu(B_r(x)\times B_r(x))>
\varepsilon\}$.
Indeed, for arbitrary fixed $n\in\N$, distinct parameters
$x,y\in\R/\Z$, and $0<r<\frac14 |x-y|_{\R/\Z},$ one has
$|u-v|\ge |x-y|-2r >\frac12 |x-y|_{\R/\Z}$ for all $u\in B_r(x)$,
$v\in B_r(y)$, so that (with $|\g_n'|=1$ a.e.)
\begin{equation}\label{eq:diagonal-proof}
\mu_n\big(B_r(x)\times B_r(y)\big) < \frac4{|x-y|_{\R/\Z}^2}\cdot 4(2r)^2,
\end{equation}
and therefore, by the characterization of weak convergence
of measures \cite[Theorem~1.40]{evans-gariepy_2015},
$$
\textstyle
\mu\big(B_r(x)\times B_r(y)\big)\le\liminf_{n\to\infty}
\mu_n\big(B_r(x)\times B_r(y)\big)\overset{\eqref{eq:diagonal-proof}}{\le}
\frac{64}{|x-y|^2_{\R/\Z}}\cdot r^2\to 0\quad\As r\to 0.
$$
Hence $(x,y)\not\in W_\varepsilon$, which proves our claim 
\eqref{eq:diagonal}.

We also claim that $W_\varepsilon$ contains only finitely many elements,
that is,
\begin{equation}\label{eq:finitely-many}
\textstyle
\sharp W_\varepsilon =\sharp V_\varepsilon\le\lceil \frac{R}\varepsilon
\rceil,
\end{equation}
where $\lceil a\rceil$ denotes the smallest integer greater or equal to $a$.
For that take $j$ parameters $x_1,\ldots, x_j\in V_\varepsilon$ and
choose radii $r_1,\ldots,r_j>0$ such that
$\mu(B_{r_k}(x_k)\times B_{r_k}(x_k))>\varepsilon$ and
$B_{r_i}(x_i)\cap B_{r_l}(x_l)=\emptyset $ for all $i,k,l\in
\{1,\ldots,j\}$ with $i\not= l$.
Then
$$
\textstyle
\varepsilon\cdot j< \sum_{k=1}^j
\mu\big(B_{r_k}(x_k)\times B_{r_k}(x_k)\big)\le
\mu(\R/\Z\times \R/\Z)\le \liminf_{n\to\infty}\mu_n\big(\R/\Z\times
\R/\Z\big)\le R,
$$
which establishes \eqref{eq:finitely-many}.

Our last claim in this step says that the measure $\mu$ is uniformly
small on small sets off $W_\varepsilon$. More precisely,
\begin{equation}\label{eq:unif-small-mu}
\textnormal{There exists $\rho_\mu>0$, such that
$\mu\big((B_{\rho_\mu}(z)\times B_{\rho_\mu}(z))\setminus W_\varepsilon\big)
\le 2\varepsilon$\,\, for all $z\in\R/\Z.$}
\end{equation}
To prove this define the restricted measures 
(cf.\@ \cite[Definition 1.2]{evans-gariepy_2015}) 
$\bar{\mu}:=\mu\measurerestr [(\R/\Z\times\R/\Z)\setminus
W_\varepsilon ]$ satisfying
\begin{equation}\label{eq:restricted-measure}
\bar{\mu}(A)=\mu\big([(\R/\Z\times\R/\Z)\setminus W_\varepsilon ]\cap A\big)
=\mu\big(A\setminus W_\varepsilon)\quad
\text{for Lebesgue measurable }
A\subset\R/\Z\times\R/\Z,
\end{equation}
and assume for contradiction, that for every $k\in\N$ there is
a parameter $z_k\in\R/\Z$ such that $\bar{\mu}\big(B_\frac1k (z_k)
\times B_\frac1k (z_k)\big)>2\varepsilon.$ We may assume that $z_k\to
z_0\in\R/\Z$ as $k\to\infty$, so that for every $r>0$ there is an 
index $k_0\in\N$ such that 
$|z_k-z_0|<\frac{r}2$ for all $k\ge k_0$, which implies for all 
$k\ge\max\{k_0,\frac2{r}\}$
$$
|u-z_0|\le |u-z_k|+|z_k-z_0|<\frac1{k}+\frac{r}2\le r\quad\Foa
u\in B_\frac1k (z_k).
$$
Consequently,
\begin{equation}\label{eq:widerspruch-mubar}
\textstyle
\bar{\mu}\big( B_r(z_0)\times B_r(z_0)   \big)
\ge\bar{\mu}\big( B_\frac1k (z_k)\times B_\frac1k (z_k)  \big)
>2\varepsilon\quad\Foa r>0,\,k\ge\max\{k_0,\frac2{r}\}.
\end{equation}
{\it Case 1.}\,
If $z_0\not\in V_\varepsilon$ then by definition of $W_\varepsilon,$
by \eqref{eq:diagonal}, \eqref{eq:restricted-measure}, and
\eqref{eq:widerspruch-mubar},
$$
\varepsilon\ge\lim_{r\downarrow 0}\mu\big(B_r(z_0)\times B_r(z_0) \big)
\overset{\eqref{eq:restricted-measure}}{\ge}
\lim_{r\downarrow 0}\bar{\mu}\big(B_r(z_0)\times B_r(z_0)\big)
\overset{\eqref{eq:widerspruch-mubar}}{\ge}2\varepsilon,
$$
contradiction.

{\it Case 2.}\,
If $z_0\in V_\varepsilon$, then write $\{z_0\}\times\{z_0\}=
\bigcap_{r>0}(B_r(z_0)\times B_r(z_0))$ to obtain by virtue
of \eqref{eq:diagonal}, \eqref{eq:restricted-measure}, 
\cite[Theorem~1.2(iv)]{evans-gariepy_2015}, and \eqref{eq:widerspruch-mubar},
\begin{align*}
0 & =
\mu(\emptyset)\overset{\eqref{eq:diagonal}}{=}
\mu\big((\{z_0\}\times\{z_0\})\setminus W_\varepsilon\big)
\overset{\eqref{eq:restricted-measure}}{=}\textstyle
\bar{\mu}\big(\bigcap_{r>0}(B_r(z_0)\times B_r(z_0)\big)\\
&
 = \lim_{r\downarrow 0}\bar{\mu}\big(B_r(z_0)\times B_r(z_0)\big)
\overset{\eqref{eq:widerspruch-mubar}}{\ge} 2\varepsilon,
\end{align*}
again a contradiction. This completes the proof of our claim
\eqref{eq:unif-small-mu}.

{\it Step 4. Identifying a suitable approximating knot $\g_{n_0}$.}\,
For any fixed parameter $x\in\R/\Z$ we estimate the measure $\mu$ on annuli
$A_{r,\theta}(x)=B_r(x)\setminus B_{\theta r}(x)$ as
\begin{align*}
0 & \le \mu\big( \overline{A_{r,\theta}(x)}\times
\overline{A_{r,\theta}(x)}\big)\le
\mu\big((B_{2r}(x)\setminus B_{\theta r}(x))\times
(B_{2r}(x)\setminus B_{\theta r}(x))\big)\\
& = \mu\big( B_{2r}(x)\times B_{2r}(x) \big)-
\mu\big( B_{\theta r}(x)\times B_{\theta r}(x)\big)
\to 0\quad\As r\to 0
\end{align*}
by virtue of \eqref{eq:limit-balls}. Thus, there is a radius $r_x>0$
such that
\begin{equation}\label{eq:small-annuli-measure}
\mu\big( \overline{A_{r,\theta}(x)}\times
\overline{A_{r,\theta}(x)} \big) <\textstyle\frac{\theta}2\quad\Foa
r\in (0,r_x],
\end{equation}
where $\theta$ is given in \eqref{eq:fix-theta}.
In particular, for the finitely many parameters in
$V_\varepsilon=\{x_1,\ldots,x_N\}$, $N\le\lceil\frac{R}\varepsilon
\rceil$ we find radii $r_l:=r_{x_l}>0$ such that
\begin{equation}\label{eq:small-annuli-measure-xl}
\mu\big( \overline{A_{r,\theta}(x_l)}\times
\overline{A_{r,\theta}(x_l)} \big) 
<\textstyle\frac{\theta}2\quad\Foa
r\in (0,r_l],\,\,l=1,\ldots,N.
\end{equation}
Now set
\begin{equation}\label{eq:final-radius}
\bar{r}:=
\textstyle
\min\big\{\rho_\mu,r_1,\ldots,r_N,r_\g,\frac1{4p},\frac14\min\{|x_i-x_k|:
i\not=k,i,k\in\{1,\ldots,N\}\,\big\} >0,
\end{equation}
where $\rho_\mu>0$ is the radius such that \eqref{eq:unif-small-mu}
holds, and $r_\g>0$ is the scale at which $\g$ satisfies the
distortion estimate \eqref{eq:local-distortion-limit}, and $p\in\N\setminus
\{1\}$ determines the prescribed rotational symmetry defined in \eqref{eq:rot-symm}.
Inequality \eqref{eq:small-annuli-measure-xl} implies in particular that
\begin{equation}\label{eq:small-annuli-measure-xl-unif}
\mu\big( \overline{A_{r,\theta}(x_l)}\times
\overline{A_{r,\theta}(x_l)} \big)
<\textstyle\frac{\theta}2\quad\Foa
r\in (0,\bar{r}],\,\,l=1,\ldots,N.
\end{equation}
For the fixed radius $r:
=\bar{r}$ we use the weak convergence \eqref{eq:weak-convergence} to find for each $l\in\{1,\ldots,N\}$ some index $n_l\in\N$ such
that by \cite[Theorem~1.40]{evans-gariepy_2015}
$\mu_n(\overline{A_{\bar{r},\theta}(x_l)}\times
\overline{A_{\bar{r},\theta}(x_l)})<\theta$ for all $n\ge n_l$, so that
\begin{equation}\label{eq:unif-small-measures-annuli-indices}
\mu_n\big(\overline{A_{\bar{r},\theta}(x_l)}\times
\overline{A_{\bar{r},\theta}(x_l)})
<\theta\quad\Foa l=1,\ldots,N,\,n\ge n_*,
\end{equation}
where $n_*:=\max\{n_l:l=1,\ldots,N\}$.
By means of the uniform convergence in \eqref{eq:unif-subconvergence}
we may choose $n_{**}\in\N,$ $n_{**}\ge n_*$ such that
\begin{equation}\label{eq:linfty-close-approx}
\|\g_n-\g\|_{L^\infty}\le\textstyle \frac1{128}\cdot\frac{\bar{r}}L
\quad\Foa n\ge n_{**}.
\end{equation}
Now choose finitely many parameters $y_1,\ldots,y_K\in \R/\Z\setminus
B_{\bar{r}}(V_\varepsilon)$, such that
\begin{equation}\label{eq:covering-complement-Veps}
\R/\Z\setminus \overline{B_{\frac34 \bar{r}}(V_\epsilon)}
\subset\bigcup_{k=1}^K B_{\frac14 \bar{r}}(y_k).
\end{equation}
In particular, from $y_1,\ldots,y_K\notin B_{\bar{r}}(V_\varepsilon)$ we infer that
\begin{equation}\label{eq:covering-intersect}
B_{\frac58 \bar{r}}(V_\epsilon)\cap\bigcup_{k=1}^K B_{\frac38 \bar{r}}(y_k)
=\emptyset.
\end{equation}
Then use the weak convergence \eqref{eq:weak-convergence} again to
find an index $n_0\ge n_{**}$ such that in view of \eqref{eq:unif-small-mu}
\begin{equation}\label{eq:max-mu-n0}
\max_{k=1,\ldots,K}\textstyle
\mu_{n_0}\big(\overline{B_{\frac12 \bar{r}}(y_k)}\times
\overline{B_{\frac12 \bar{r}}(y_k)}\big)\le 3\varepsilon,
\end{equation}
which is possible, since by \cite[Theorem~1.40]{evans-gariepy_2015} and
by our choice of $\bar{r}$ in \eqref{eq:final-radius} and of the
parameters $y_1,\ldots,y_K\in\R/\Z\setminus B_{\bar{r}}(V_\varepsilon)$,
\begin{align*}
&\limsup_{n\to\infty}\mu_n\big(\overline{B_{\frac12 \bar{r}}(y_k)}
\times
\overline{B_{\frac12 \bar{r}}(y_k)}\big)
\overset{\eqref{eq:weak-convergence}}{\le}
\mu\big(\overline{B_{\frac12 \bar{r}}(y_k)}
\times
\overline{B_{\frac12 \bar{r}}(y_k)}\big)\le
\mu\big(B_{\frac34 \bar{r}}(y_k)
\times B_{\frac34 \bar{r}}(y_k)\big)\\
& = \mu\big(B_{\frac34 \bar{r}}(y_k)
\times B_{\frac34 \bar{r}}(y_k)\setminus W_\varepsilon\big)
\overset{\eqref{eq:final-radius}}{\le}
\mu\big((B_{\rho_\mu}(y_k)
\times
B_{\rho_\mu}(y_k))\setminus W_\varepsilon\big)
\overset{\eqref{eq:unif-small-mu}}{\le}
2\varepsilon\Foa k=1,\ldots,K.
\end{align*}
Summarizing \eqref{eq:unif-small-measures-annuli-indices},
\eqref{eq:linfty-close-approx}, and \eqref{eq:max-mu-n0},
we arrive at
\begin{equation}\label{eq:summary-n0}
\textstyle
\begin{cases}
\mu_{n_0}\big(\overline{A_{\bar{r},\theta}(x_l)}\times
\overline{A_{\bar{r},\theta}(x_l)}\big)<\theta & \Foa l=1,\ldots,N,\\
\mu_{n_0}\big(B_{\frac{\bar{r}}2}(y_k)\times
B_{\frac{\bar{r}}2}(y_k)\big)\le 3\varepsilon &\Foa k=1,\ldots,K,\\
\|\g_{n_0}-\g\|_{L^\infty} \le \frac1{128}\cdot\frac{\bar{r}}L.
\end{cases}
\end{equation}

{\it Step 5. Cutting out concentrations of $\g_{n_0}$ and harmonic 
substitution.}\,
Now we modify the knot
$\g_{n_0}$ in each ball $B_{\bar{r}}(x_i)\subset\R/\Z$, $x_i\in
V_\varepsilon,$ $i=1,\ldots,N,$ where $\bar{r}$ is defined in 
\eqref{eq:final-radius}. We do that as in \eqref{eq:modified-curve}
by choosing suitable truncation points $x_\pm^i\in G_\pm^i:=
G_\pm(x_i,\theta,\bar{r})$, where the sets $G_\pm^i$ are defined
as in \eqref{eq:G-set} for $x:=x_i$, $r:=\bar{r}$, and
$\theta:=\theta_4<\theta_3=(64L)^{-8}$ (see  \eqref{eq:fix-theta}),
to obtain the modified curve
\begin{equation}\label{eq:gamma-n0-modified}
\tilde{\gamma}_{n_0}(z):=\begin{cases}
\gamma_{n_0}(x_-^i)+\frac{z-x_-^i}{x_+^i-x_-^i}\big(\gamma_{n_0}(x_+^i)
-\gamma_{n_0}(x_-^i)\big) & \Fo
z\in [x_-^i,x_+^i], i=1\ldots,N,\\
\gamma_{n_0}(z) & \Fo z\in\R/\Z\setminus\bigcup_{i=1}^N[x_-^i,x_+^i].
\end{cases}
\end{equation}
If $V_\varepsilon=\emptyset$ then simply set $\tilde{\gamma}_{n_0}:=
\gamma_{n_0}$. We infer from $\bilip(\g_{n_0})\ge\frac1{L}$ 
(since $\g_{n_0}\in\mathcal{F}_{p,L,R}(\mathcal{K})$)
and
\eqref{eq:bilip-tilde-eta} in
Lemma \ref{lem:bilip-control-harmonic} for $\eta:=\g_{n_0}$,
that there is a constant $\tilde{L}\in [L,2L]$ such that
\begin{equation}\label{eq:bilip-tildegamma}
\textstyle
|\tilde{\g}_{n_0}(y)-\tilde{\g}_{n_0}(z)|\ge
\frac1{\tilde{L}}d_{\tilde{\g}_{n_0}}(y,z)
\quad\Foa y,z\in\R/\Z.
\end{equation}
Thus, $\tilde{\g}_{n_0}$ is embedded. Since $\g_{n_0}$ is assumed to be
of class $C^1(\R/\Z,\R^3)$, and we substituted the subarcs 
$\g_{n_0}([x_-^i,x_+^i])$ by straight segments with small
angels 
$\ANG\big(\g_{n_0}'(x_\pm^i),\frac{\g_{n_0}(x_+^i)-\g_{n_0}(x_-^i)}{
x_+^i-x_-^i}\big)$ for $i=1,\ldots,N$, we can modify the
arguments given by Crowell and Fox
in \cite[Appendix~I]{crowell-fox_1977} to show that 
the modified curve $\tilde{\g}_{n_0}$ is still tame\footnote{although 
$\tilde{\g}_{n_0}$ is in general not in $W^{\frac32,2}(\R/\Z,\R^3)$ 
anymore because of the harmonic substitution; 
cf.\@ Footnote~\ref{foot:harmonic-substitution}.}.
Let us briefly sketch their arguments and indicate the necessary
modifications, the details are then left to the reader.

For a given arclength parametrized $C^1$-knot $\eta$ and any angle
$\alpha\in (0,\alpha_0(\eta)]$, where the threshold angle $\alpha_0(\eta)$
depends on the modulus of continuity of the unit tangent $\eta'$,
Crowell and Fox construct a necklace of double cones (see
\cite[Figure 62]{crowell-fox_1977}) with apex angle $\alpha$,
such that the finitely many apices of these double cones form a closed
polygon $P$ inscribed in $\eta$, and such that $\eta$ intersects every
circular cross-section of each double cone exactly once
and is contained in that necklace. The inscribed
polygon $P$ does the same, which leads them to define a homeomorphism
of $\R^3$ which equals the identity outside the double cones, by
mapping within each cross-section the unique curve point of $\eta$ linearly
to the respective unique point on $P$ in that same cross-section;
see \cite[Figure 64]{crowell-fox_1977}. This way they show that $\eta$
and $P$ are equivalent to conclude that any such $C^1$-curve $\eta$
is tame. In our situation we proceed in the same way\footnote{Here it is important to let 
$m=\bilip\eta=\tilde L$ (not just $L$) 
in~\cite[Eq.~(6), p.~149]{crowell-fox_1977}.} 
with $\tilde{\eta}:=
\tilde{\g}_{n_0}|_{\R/\Z\setminus\bigcup_{i=1}^N [x_-^i,x_+^i]}$
obtaining $N$ open polygonal lines with endpoints $\tilde{\g}_{n_0}(x_\pm^i)$,
$i=1,\ldots,N$, forming the axes of $N$ disjoint necklaces each consisting
of mutually disjoint double cones (by choosing a suitably small 
apex angle $\alpha$ and sufficiently small distances between
neighboring apices). Taking the union of these $N$ polygonal lines with the
$N$ straight segments $\tilde{\g}_{n_0}([x_-^i,x_+^i]), $ $i=1,\ldots,N$,
defines a closed polygon $P$ inscribed in $\tilde{\g}_{n_0}$ (coinciding
with $\tilde{\g}_{n_0}$ in its straight segments). 
We will now show that for any
 $\alpha\in (0,\frac{\pi}4)$ one has
\begin{equation}\label{eq:small-beta-angle}
\beta:=\max_{i=1,\ldots,N}\textstyle
\max\big\{\ANG\big(\g_{n_0}'(x_-^i),\frac{\g_{n_0}(x_+^i)-\g_{n_0}(x_-^i)}{
x_+^i-x_-^i}\big),
\ANG\big(\frac{\g_{n_0}(x_+^i)-\g_{n_0}(x_-^i)}{
x_+^i-x_-^i},\g_{n_0}'(x_+^i)\big)
\big\} < \pi -2\alpha,
\end{equation}
to prevent intersections of any segment $\tilde{\g}_{n_0}([x_-^i,x_+^i])$
with their neighboring double cones. 
Indeed, \eqref{eq:small-beta-angle} is guaranteed by inequality
\eqref{eq:difference-quotients} of Lemma \ref{lem:difference-quotients},
which implies for $\nu_i:=\nu_{B_{\bar{r}}(x_i)}\in\S^{n-1}$ defined
as in \eqref{eq:bmo3},
\begin{align*}
\cos\ANG\big(\g_{n_0}'(x_\pm^i),\nu_i\big)& =
\textstyle
\langle\g_{n_0}'(x_\pm^i),\nu_i
\rangle_{\R^3}=\lim_{s\to 0}\big\langle 
\frac{\g_{n_0}(x_\pm^i+s)-\g_{n_0}(x_\pm^i)}{s},\nu_i\big\rangle_{\R^3}
\overset{\eqref{eq:difference-quotients}}{\ge}
1-2\theta^\frac14,
\end{align*}
and also
\begin{align*}
\textstyle
\cos\ANG\big(\frac{\g_{n_0}(x_+^i)-\g_{n_0}(x_-^i)}{
|\g_{n_0}(x_+^i)-\g_{n_0}(x_-^i)|},\nu_i\big)& =
\big\langle
\frac{\g_{n_0}(x_+^i)-\g_{n_0}(x_-^i)}{x_+^i-x_-^i},\nu_i
\big\rangle_{\R^3}\cdot\frac{x_+^i-x_-^i}{|\g_{n_0}(x_+^i)-\g_{n_0}(x_-^i)|}
\overset{\eqref{eq:difference-quotients}}{\ge}
1-2\theta^\frac14.
\end{align*}
With $\theta<\theta_3\le 64^{-8}$ we find $1-2\theta^\frac14\ge 1-\frac1{128}
>\frac{\sqrt{3}}2,$ so that $\beta
<\frac{\pi}3,$
which implies \eqref{eq:small-beta-angle}.
In order to show that no segment $\tilde{\g}_{n_0}([x_-^i,x_+^i])$
interferes with other parts of the curve, we argue as in~\cite[(I.6), p.~150]{crowell-fox_1977}.

Notice that if $V_\varepsilon\not=\emptyset$ with  some point
$\xi_1\in V_{\varepsilon},$ then by symmetry of $\g_{n_0}$ also the 
points $\xi_j:=\xi_1+\frac{j-1}p\in\R/\Z$ for $j=2,\ldots,p$, are contained
in $V_\varepsilon$, 
so that $\sharp V_\varepsilon\ge 2$.

We are now going to employ the harmonic substitution.
If we replace a part of a curve by a straight line, we may face both global and local effects.
A global effect means that a strand of $\g|_{\R/\Z\setminus B_{\bar r}(x_i)}$ might interfere
with the substitution.
This means that there are some $\xi\in[x_i^-,x_i^+]$, $z\in \R/\Z\setminus B_{\bar r}(x_i)$
such that $\g_{n_0}(z)$ lies on the closed
segment with endpoints $\g_{n_0}(\xi)$ and 
$\tilde\g_{n_0}(\xi)$. 
(We can disregard points in $[x_i-\bar r,x_i^-]$ and $[x_i^+,x_i+\bar r]$ due to small local distortion.)
But in this case we find
\[ \textstyle \frac{3\bar r}{8L}\le \frac1Ld_{\g_{n_0}}(z,\xi)\le |\g_{n_0}(z)-\g_{n_0}(\xi)|
\le |\tilde\g_{n_0}(\xi)-\g_{n_0}(\xi)| \le \|\tilde\g_{n_0}-\g_{n_0}\| \le 6\theta^{1/8}\bar r \]
which implies $1\le\frac83L\cdot6\theta^{1/8}=\frac1{48}$, a contradiction.
It remains to consider local effects.
So,
if any of the subarcs $\g_{n_0}([x_-^i,x_+^i])$ (trivially extended to
infinity) were non-trivially knotted, then by symmetry of $\g_{n_0}$
there are at least two such non-trivially knotted subarcs, which implies
that $\g_{n_0}$ is a composite knot, contradicting our assumption.
Consequently, the subarcs $\g_{n_0}([x_-^i,x_+^i])$ are topologically
trivial for $i=1,\ldots,N$, and these subarcs were replaced by the
topologically trivial straight segments $\tilde{\g}_{n_0}(
[x_-^i,x_+^i])$, which implies that $\g_{n_0}$ is equivalent to
$\tilde{\g}_{n_0}$, i.e., $[\g_{n_0}]=[\tilde{\g}_{n_0}]$.

{\it Step 6. The final local distortion estimate for
the modified curve $\tilde{\g}_{n_0}$.}\,
For any $\xi\in\R/\Z$ one either has $\xi\in\overline{B_{\frac34 \bar{r}}(
V_\varepsilon)}$, in which case there exists $i\in\{1,\ldots,N\}$ 
with $x_i\in V_\varepsilon$, such that
\begin{equation}\label{eq:xi-contain1}
B_{\frac18\bar{r}}(\xi)\subset 
B_{\bar{r}}(x_i),
\end{equation}
or, by the covering \eqref{eq:covering-complement-Veps}, there exists
$k\in\{1,\ldots,K\}$ such that $\xi\in B_{\frac14 \bar{r}}(y_k)$, so that
\begin{equation}\label{eq:xi-contain2}
B_{\frac18\bar{r}}(\xi)\subset B_{\frac38 \bar{r}}(y_k).
\end{equation}
If \eqref{eq:xi-contain1} holds true then $B_{\frac18\bar{r}}(\xi)$
is either contained in the interval $[x_i-\bar{r},x_+^i]$ or in
$[x_-^i,x_i+\bar{r}]$ so that we may apply \eqref{eq:linfty-dist-theta}
of Lemma \ref{lem:harmonic-substitution} to the curve $\eta:=
\g_{n_0}$, to the parameters $x_1,\ldots,x_N\in V_\varepsilon$,
the radius $r:=\bar{r}$, and $\theta=\theta_4\in (0,\theta_2)$ (see
\eqref{eq:fix-theta}), to obtain
\begin{equation}\label{eq:distortion-tildegamma1}
\sup_{y,z\in B_{\frac18 \bar{r}}(\xi)\atop y\not= z}\textstyle
\frac{d_{\tilde{\g}_{n_0}}(y,z)}{|\tilde{\g}_{n_0}(y)-
\tilde{\g}_{n_0}(z)|}< 1+4\theta^\frac14<\frac{\pi}3<\frac{\pi}{\sqrt{8}}
<g_3=\frac{\sqrt{32}}{\sqrt{27}}\cdot
\frac{\pi}{\sqrt{8}}.
\end{equation}
If, on the other hand, \eqref{eq:xi-contain2} is true, then we know
from \eqref{eq:covering-intersect} and our construction of 
$\tilde{\g}_{n_0}$,  which implies in particular that $\tilde{\g}_{n_0}=
\g_{n_0}$ on $\R/\Z\setminus B_{\frac58 \bar{r}}(V_\varepsilon)$,
that
\begin{equation}\label{eq:coincidence-gamma-tildegamma}
\tilde{\g}_{n_0}=\g_{n_0}\quad\ON B_{\frac38 \bar{r}}(y_k).
\end{equation}
By means of \eqref{eq:summary-n0} we infer
\begin{equation}\label{eq:small-mu-n0-measure}
\lfloor\g_{n_0}'\rfloor^2_{\frac12,2,B_{\frac38 \bar{r}}(y_k)}=
\mu_{n_0}\big(B_{\frac38 \bar{r}}(y_k)\times B_{\frac38 \bar{r}}(y_k)\big)
\le 3\varepsilon,
\end{equation}
which we combine with \eqref{eq:seminorm-bilip2} in Corollary
\ref{cor:seminorm-bilip} for $\g:=\g_{n_0}$ to conclude
\begin{align}
\label{eq:distortion-tildegamma2}
\sup_{y,z\in B_{\frac18 \bar{r}}(\xi)\atop y\not= z}
\textstyle
\frac{d_{\tilde{\g}_{n_0}}(y,z)}{|\tilde{\g}_{n_0}(y)-
\tilde{\g}_{n_0}(z)|}&
\overset{\eqref{eq:xi-contain2},\eqref{eq:coincidence-gamma-tildegamma}}{\le}
\sup_{y,z\in B_{\frac38 \bar{r}}(y_k)\atop y\not= z}
\textstyle
\frac{d_{\g_{n_0}}(y,z)}{|\g_{n_0}(y)-\g_{n_0}(z)|}
\overset{\eqref{eq:seminorm-bilip2},\eqref{eq:small-mu-n0-measure}}{\le}
\frac1{\sqrt{1-\frac32 \varepsilon}}
\overset{\eqref{eq:epsilon-choice}}{=}\frac{\pi}3<\frac{\pi}{\sqrt{8}}<g_3,
\end{align}
by our choice of $\varepsilon$ in \eqref{eq:epsilon-choice}.
In view of \eqref{eq:distortion-tildegamma1} and
\eqref{eq:distortion-tildegamma2} we have shown that, by continuity of
$\tilde{\g}_{n_0}$,
\begin{equation}\label{eq:distortion-tildegamma}
\sup_{y,z\in\R/\Z\atop 0<d_{\tilde{\g}_{n_0}}(y,z)\le\frac14\bar{r}}
\textstyle
\frac{d_{\tilde{\g}_{n_0}}(y,z)}{|\tilde{\g}_{n_0}(y)-
\tilde{\g}_{n_0}(z)|}\le\frac{\pi}3< g_3.
\end{equation}
According to \eqref{eq:bilip-tildegamma}  we have for two parameters
$y,z\in\R/\Z$ with intrinsic distance $d_{\tilde{\gamma}_{n_0}}(y,z)
>\frac14\bar{r}$ the estimate
$$
\textstyle
\frac14\bar{r}<d_{\tilde{\gamma}_{n_0}}(y,z)\le\tilde{L}
|\tilde{\g}_{n_0}(y)-
\tilde{\g}_{n_0}(z)|\le 2L|\tilde{\g}_{n_0}(y)-
\tilde{\g}_{n_0}(z)|.
$$
Therefore, \eqref{eq:distortion-tildegamma} implies for the local
distortion of $\tilde{\g}_{n_0}$
\begin{equation}
\textstyle\delta(\tilde{\g}_{n_0},r_{\tilde{\g}_{n_0}})\le\frac{\pi}3<\frac{\pi}{\sqrt{8}}<g_3
\end{equation}
at scale $r_{\tilde{\g}_{n_0}}:=\frac1{16L}\bar{r}.$ In order to apply
Corollary \ref{cor:equiv-knots} to $\g_1:=\g$ and $\g_2:=\tilde{\g}_{n_0}$ 
it suffices to verify by means of \eqref{eq:summary-n0} and 
\eqref{eq:linfty-dist-theta} in Lemma \ref{lem:harmonic-substitution}
for $\eta:=\g_{n_0}$
\begin{eqnarray*}
\|\g-\tilde{\g}_{n_0}\|_{L^\infty}&\le &\|\g-\g_{n_0}\|_{L^\infty}+
\|\g_{n_0}-\tilde{\g}_{n_0}\|_{L^\infty}\\
&\overset{\eqref{eq:summary-n0},\eqref{eq:linfty-dist-theta}}{<}&
\textstyle
\frac1{128L}\bar{r}+6\theta^\frac18\bar{r}
\overset{\eqref{eq:fix-theta}}{=}\frac1{64L}\bar{r}\le\frac14\min\{
r_\g,r_{\tilde{\g}_{n_0}}\}.
\end{eqnarray*}
Therefore, we deduce $\mathcal{K}=
[\g_{n_0}]=[\g]$ by Corollary \ref{cor:equiv-knots}. \hfill $\Box$

\section{Symmetric critical knots}\label{sec:symm-critical}
In this section we are going to use our weak compactness result, Theorem
\ref{thm:fractional-compactness}, to prove the existence of rotationally
symmetric critical knots for the M\"obius energy.

{\it Proof of Theorem~\ref{thm:symm-crit-prime}.}\,
Since $\Sigma_p(\mathcal{K})$ contains a knot with finite M\"obius energy,
the knot equivalence class $\mathcal{K}$ is tame according to
\cite[Theorem~4.1]{freedman-etal_1994}, 
the infimum of the energy is finite, and there is a minimizing sequence
$(\g_m)_m\subset\Sigma_p(\mathcal{K})$ with 
$$
\emob(\g_m)\to\inf_{\Sigma_p(\mathcal{K})}\emob(\cdot)<\infty\quad
\As m\to\infty.
$$
Due to parameter invariance of $\emob$ and because
rescaling to unit length and subsequent
reparametrization to arclength preserves the knot equivalence class,
and also the $p$-rotational symmetry 
\eqref{eq:rot-symm} according to 
\cite[Lemma 4.6]{gilsbach-vdm_2018}\footnote{see
 version 2 in \cite[Lemma 4.6]{gilsbach-vdm_2017} for the corrected proof.},
we may assume without loss
of generality that $|\g_m'|=1$  a.e.\@ on $\R/\Z$ for all $m$. In order
to apply Theorem~\ref{thm:fractional-compactness} we use convolutions
$\g_{m,\epsilon_m}\in C^\infty(\R/\Z,\R^3)$, $\epsilon_m\to 0 $ as 
$m\to\infty$, 
their
rescalings $\hat{\g}_{m,\epsilon_m}:=(\mathcal{L}(\g_{m,\epsilon_m}))^{-1}
\g_{m,\epsilon_m}$, and reparametrize $\hat{\g}_{m,\epsilon_m}$ to 
arclength to obtain $\G_m\in C^\infty(\R/\Z,\R^3)$ with $|\G_m'|=1$
a.e.\@ on $\R/\Z$ for all $m\in\N$, such that
$$
\emob(\G_m)\le\emob(\g_m)+\frac1{m}\quad\Foa m\in\N,
$$
which is possible due to
\cite[Theorems 1.1, 1.3 \& 1.4]{blatt_2019b}. Especially 
\cite[Theorem~1.3]{blatt_2019b} in combination with Corollary 
\ref{cor:fractional-stability}
of the present paper allows us to assume that $[\G_m]=[\g_m]=
\mathcal{K}$ for all $m\in\N$.
As before, this mollification,
 rescaling, and
 subsequent reparametrization to arclength preserves the 
 $p$-rotational
 symmetry defined in \eqref{eq:rot-symm},
 so that $\G_m\in\Sigma_p(\mathcal{K})$ for  all $m\in\N$.

 This new $\emob$-minimizing sequence $(\G_m)_m$ has
 equibounded M\"obius energy, which yields a uniform bound on the 
 seminorms $\lfloor \G_m'\rfloor_{\frac12,2}$ according to
  \cite[Theorem~1.1]{blatt_2012a}
 or \cite[Theorem~3.2(i)]{gilsbach-vdm_2018}, as well as a uniform bilipschitz constant
 $L$ depending only on the uniform energy bound; see 
 \cite[Theorem~2.3]{ohara_1992a}.
 So, for each $m\in\N$, the curve $\G_m$ is contained in the class
 $\mathcal{F}_{R,L,p}$ as defined in 
 \eqref{eq:compact-subset}
 for $R:=\sup_{m\in\N}\lfloor \G_m'\rfloor^2_{\frac12,2}$,
 where we assumed (by translational invariance of $\emob$) that 
 $\G_m(0)=0\in\R^3$ for all $m\in\N$.

Consequently, Theorem~\ref{thm:fractional-compactness} implies the existence
of a limiting arclength parametrized knot $\G\in\mathcal{F}_{R,L,p}$ such that
a subsequence (still denoted by $\G_m$) converges weakly in 
$W^{\frac32,2}$ and uniformly to $\G$ as $m\to\infty$.
The 
lower semicontinuity of $\emob$ (see \cite[Lemma 4.2]{freedman-etal_1994})
implies
\begin{equation}\label{eq:inf-emob}
\inf_{\Sigma_p(\mathcal{K})}\emob(\cdot)
\le\emob(\G)\le\liminf_{m\to\infty}\emob(\G_m)=\inf_{\Sigma_p(\mathcal{K})}
\emob(\cdot);
\end{equation}
hence $\G$ is the desired symmetric minimizing prime knot.

It remains to be verified that $\G$ is critical for the M\"obius energy on its full domain, because then $\G\in C^\omega(\R/\Z,\R^3)$ due to
\cite[Corollary 1.3]{blatt-vorderobermeier_2019}.
For that notice first that we can proceed exactly as
in the proof of Corollary \ref{cor:fractional-stability}
to find a scale $r_\G>0$ for
the arclength parametrized $W^{\frac32,2}$-knot $\G$, such that
$\delta(\G,r_\G)< g_3$.
We symmetrize any variational vector $h\in C^\infty(\R/\Z,\R^3)$ according to
\begin{equation}\label{eq:symmetrize}
\textstyle
h_\textnormal{sym}(u):=\sum_{k=1}^p\Rot_{-\frac{2\pi k}p}\circ h(u+\frac{k}p)
\quad\Fo u\in\R/\Z,
\end{equation}
to obtain by virtue of Corollary \ref{cor:lipschitz-stability}
a number $\epsilon_{\G,h}>0$ such that the perturbed curves
$\Upsilon_\epsilon:=\G+\epsilon h_\textnormal{sym}$ are immersed knots
equivalent to $\G$ for all $|\epsilon|\le\epsilon_{\G,h}.$
Regarding $p$-rotational symmetry we compute (using \eqref{eq:rot-symm} for
$\G$)
\begin{align*}
\textstyle
&\Rot_{\frac{2\pi}p}\circ\Upsilon_\epsilon(u-\frac1{p})
\textstyle
=\Rot_{\frac{2\pi}p}\circ\G(u-\frac1p)+\epsilon\Rot_{\frac{2\pi}p}\circ
h_\textnormal{sym}(u-\frac1p)\\
&\textstyle\overset{\eqref{eq:rot-symm}}{=}
\G(u)+\epsilon\sum_{k=1}^p\Rot_{-\frac{2\pi}p(k-1)}\circ
h(u+\frac{k-1}p)
= \G(u)+\epsilon\sum_{k=1}^{p-1}\Rot_{-\frac{2\pi k}p}
\circ h(u+\frac{k}p)+\epsilon h(u)\\
&=\G(u)+\epsilon\sum_{k=1}^p
\Rot_{-\frac{2\pi k}p}\circ h(u+\frac{k}p)
 = \G(u)+\epsilon h_\textnormal{sym}(u)=\Upsilon_\epsilon (u)\quad\Foa
u\in\R/\Z,\,|\epsilon|\le\epsilon_{\G,h}.
\end{align*}
Hence $\Upsilon_\epsilon\in\Sigma_p(\mathcal{K})$ and \eqref{eq:inf-emob}
implies
$$
\emob(\G)\le
\emob(\Upsilon_\epsilon)=
\emob(\G+\epsilon h_\textnormal{sym})\quad\Foa |\epsilon|\le\epsilon_{\G,h}.
$$
Therefore, taking the first variation $\delta\emob(\G,\cdot)$, which is linear in 
its second entry,
\begin{equation}\label{eq:variation}
\textstyle
0=\delta\emob(\G,h_\textnormal{sym})=\sum_{k=1}^p\delta\emob\big(\G,
\Rot_{-\frac{2\pi k}p}\circ h (\cdot +\frac{k}p)\big).
\end{equation}
Furthermore, from the symmetry \eqref{eq:rot-symm}
of $\G$ and the invariance of $\emob$ under
rotations and reparametrizations one deduces 
for every $k\in\{1,\ldots,p\}$
\begin{align*}
\textstyle
\emob\big(&\textstyle\G(\cdot)+\epsilon\Rot_{-\frac{2\pi k}p}\circ h(\cdot+\frac{k}p)\big)
\overset{\eqref{eq:rot-symm}}{=}
\textstyle
\emob\big(\Rot_{-\frac{2\pi k}p}\circ\G(\cdot +\frac{k}p)
+\epsilon\Rot_{-\frac{2\pi k}p}\circ h(\cdot+\frac{k}p)\big)\\
& \textstyle
=\emob\big(\Rot_{-\frac{2\pi k}p}\circ [\G(\cdot +\frac{k}p)+\epsilon
h(\cdot+\frac{k}p)]\big)=
\emob\big(\G(\cdot +\frac{k}p)+\epsilon
h(\cdot+\frac{k}p)\big)=\emob(\G+\epsilon h),
\end{align*}
which implies 
$$
\textstyle
\delta\emob\big(\G,\Rot_{-\frac{2\pi k}p}\circ h(\cdot +
\frac{k}p)\big)
\!=\!\frac{d}{d\epsilon} |_{\epsilon=0}
\emob\big(\G+\epsilon\Rot_{-\frac{2\pi k}p}\circ h(\cdot +\frac{k}p)
\big)\!=\!
\frac{d}{d\epsilon} |_{\epsilon=0}
\emob(\G+\epsilon h)\!=\!\delta\emob(\G,h)
$$
for all $k=1,\ldots,p$.
Therefore, by \eqref{eq:variation},
$$
\textstyle
0=\sum_{k=1}^p\delta\emob\big(\G,\Rot_{-\frac{2\pi k}p}\circ h(
\cdot +\frac{k}p)\big)=p\delta\emob(\G,h),
$$
which leads to the desired criticality, since by continuity of the first variation
we may approximate an arbitrary variational vector field $h\in W^{\frac32,2}(
\R/\Z,\R^3)$ by smooth vector fields, for which the first variation
has just been shown to vanish.

In the specific case of torus knot classes $\mathcal{K}=\mathcal{T}(a,b)$
for co-prime integers $a,b\in\Z\setminus\{0,\pm 1\}$ we can proceed
exactly as in the proof of \cite[Theorem~1.2]{gilsbach-vdm_2018}
to show that there
are at least two different $\emob$-critical torus knots
$\G_1,\G_2$ in $\mathcal{T}(a,b)$ for each 
co-prime integers $a,b\in\Z\setminus\{0,\pm 1\}$. The only prerequisite
for that proof is the existence of symmetric energy
minimizers for different
rotational symmetries, which can be  obtained as above for $\emob$ on 
the symmetric subsets
$\Sigma_{p_i}(\mathcal{T}(a,b))$, $i=1,2$, for $p_1=a$ and $p_2=b$.
\hfill $\Box$






\subsection*{Acknowledgements}

S.B.\@ was partially funded through FWF grant no. P 29487-N32 ``Gradient flows of curvature energies''.
A.G.\@ gratefully acknowledges funding through an incoming grant of the University
of Salzburg, and through the stipend  ``International Research Fellow of JSPS (Postdoctoral Fellowships for Research in Japan)''.
Ph.R.\@ has been partially supported by the
German Research Foundation (DFG) through project number 289032105.
H.v.d.M.\@'s work is partially funded by the Excellence Initiative of the
German federal and state governments.

While working on an early draft of this paper, A.G.\@ 
enjoyed the hospitality of the Department of Mathematics at the University of Georgia, Athens, GA.

We are indebted to the students who attended H.v.d.M.'s 
lectures
on this topic  in the winter term 2024-2025 at 
RWTH Aachen University for their valuable comments and
corrections.

\bibliographystyle{abbrvhref}
\bibliography{Reiter-refs}

%
%
%

\end{document}